\numberwithin{equation}{section}       
\theoremstyle{plain}
\newtheorem{theo}{Theorem}
\newtheorem{prop}{Proposition}[section]
\newtheorem{lemm}[prop]{Lemma}
\newtheorem{theoalph}{Theorem}
\newtheorem{propalph}[theoalph]{Proposition}
\theoremstyle{definition}
\newtheorem{defi}[prop]{Definition}
\theoremstyle{remark}
\newtheorem{rema}[prop]{Remark}
\newtheoremstyle{citing}
  {3pt}
  {3pt}
  {\itshape}
  {}
  {\bfseries}
  {.}
  {.5em}
  {\thmnote{#3}}
\theoremstyle{citing}
\newtheorem*{generic}{}
\newcommand{\partn}[1]{{\smallskip \noindent \textbf{#1.}}}
\newcommand{\C}{\mathbb{C}}
\newcommand{\D}{\mathbb{D}}
\newcommand{\N}{\mathbb{N}}
\newcommand{\R}{\mathbb{R}}
\newcommand{\Z}{\mathbb{Z}}
\newcommand{\cC}{\mathcal{C}}
\newcommand{\cE}{\mathcal{E}}
\newcommand{\cI}{\mathcal{I}}
\newcommand{\cK}{\mathcal{K}}
\newcommand{\cM}{\mathcal{M}}
\newcommand{\cO}{\mathcal{O}}
\newcommand{\cP}{\mathcal{P}}
\newcommand{\cR}{\mathcal{R}}
\newcommand{\cW}{\mathcal{W}}
\newcommand{\cX}{\mathcal{X}}
\newcommand{\fD}{\mathfrak{D}}
\newcommand{\sM}{\mathscr{M}}
\newcommand{\sP}{\mathscr{P}}
\newcommand{\hU}{\widehat{U}}
\newcommand{\hV}{\widehat{V}}
\newcommand{\hW}{\widehat{W}}
\newcommand{\halpha}{\widehat{\alpha}}
\newcommand{\hkappa}{\widehat{\kappa}}
\newcommand{\hrho}{\widehat{\rho}}
\newcommand{\tH}{\widetilde{H}}
\newcommand{\tV}{\widetilde{V}}
\newcommand{\tX}{\widetilde{X}}
\newcommand{\tY}{\widetilde{Y}}
\newcommand{\talpha}{\widetilde{\alpha}}
\newcommand{\tgamma}{\widetilde{\gamma}}
\newcommand{\teta}{\widetilde{\teta}}
\newcommand{\tsigma}{\widetilde{\sigma}}
\newcommand{\tphi}{\widetilde{\phi}}
\renewcommand{\=}{ : = }
\DeclareMathOperator{\diam}{diam}
\DeclareMathOperator{\dist}{dist}
\DeclareMathOperator{\modulus}{mod} 
\newcommand{\wtp}{\widetilde{p}}
\newcommand{\wtm}{\widetilde{m}}
\newcommand{\chicrit}{\chi_{\operatorname{crit}}(c)}
\newcommand{\chicritbase}{\chi_{\operatorname{crit}}(c_0)}
\newcommand{\chiinfC}{\chi_{\operatorname{inf}}^{\C}(c)}
\newcommand{\chiinfR}{\chi_{\operatorname{inf}}^{\R}(c)}
\newcommand{\cl}[1]{{\rm cl}({#1})}
\newcommand{\ene}{\widetilde{n}}
\begin{document}

\title{Low-temperature phase transitions in the quadratic family}
\author{Daniel Coronel}
\address{Daniel Coronel, Facultad de Matem{\'a}ticas, Pontifica Universidad Cat{\'o}lica de Chile, Avenida Vicu{\~n}a Mackenna~4860, Santiago, Chile}
\email{acoronel@mat.puc.cl}
\author{Juan Rivera-Letelier}
\address{Juan Rivera-Letelier, Facultad de Matem{\'a}ticas, Pontifica Universidad Cat{\'o}lica de Chile, Avenida Vicu{\~n}a Mackenna~4860, Santiago, Chile}
\email{riveraletelier@mat.puc.cl}

\begin{abstract}
We give the first example of a quadratic map having a phase transition after the first zero of the geometric pressure function.
This implies that several dimension spectra and large deviation rate functions associated to this map are not (expected to be) real analytic, in contrast to the uniformly hyperbolic case.
The quadratic map we study has a non-recurrent critical point, so it is non-uniformly hyperbolic in a strong sense.
\end{abstract}


\maketitle

%
%

\section{Introduction}
In their pioneer works, Sinai, Bowen and Ruelle~\cite{Sin72, Bow75, Rue76} initiated the thermodynamic formalism of smooth dynamical systems.
They gave a complete description in the case of a uniformly hyperbolic diffeomorphism and a H{\"o}lder continuous potential.
In the last decades there has been a substantial progress in extending the theory beyond this setting.
A complete picture is emerging in real and complex dimension~$1$, see~\cite{BruTod09,IomTod11,MakSmi00,MakSmi03,PesSen08,PrzRiv11,PrzRivinterval} and references therein.
See also~\cite{Sar11,UrbZdu0901,VarVia10} and references therein for (recent) results in higher dimensions.

In this paper we focus in the quadratic family; one of the simplest and yet challenging families of smooth one-dimensional maps.
For a real parameter~$c$ we consider~$2$ dynamical systems arising from the complex quadratic polynomial
$$ f_c(z) \= z^2 + c; $$
the action of~$f_c$ on~$\R$ and the action of~$f_c$ on its complex Julia set.
For each of these dynamical systems and for a varying real number~$t$, we consider the pressure of the geometric potential~$- t \log |Df_c|$.
There are thus~$2$ pressure functions associated to~$f_c$: One in the real setting and another one in the complex setting.
In what follows we use ``geometric pressure function'' to refer to any of these functions; precise definitions and statements are given in~\S\ref{ss:main statements}.

Our main interest are ``phase transitions'' in the statistical mechanics sense:  For a real number~$t_*$ the map~$f_c$ has a \emph{phase transition at~$t = t_*$} if the geometric pressure function is not real analytic at~$t = t_*$.
In the real case, phase transitions might be caused by lack of transitivity, see for example~\cite{Dob09}.
Since these phase transitions are well understood, we restrict our discussion to parameters for which the real map is transitive. 
For~$c = -2$ the map~$f_{-2}$ is a Chebyshev polynomial and it has a phase transition at~$t = - 1$.\footnote{For~$c = -2$ the Julia set of~$f_{-2}$ is the interval~$[-2, 2]$ and both, the real and complex geometric pressure functions of~$f_{-2}$ are given by~$t \mapsto \max \{ - t \log 4, (1 - t) \log 2 \}$.}
The mechanism behind this phase transition, and of any phase transition in the complex setting that occurs at a negative value of~$t$, was explained by Makarov and Smirnov, see~\cite[Theorem~B]{MakSmi00}.\footnote{Makarov and Smirnov showed this type of phase transition is caused by the existence of a gap in the Lyapunov spectrum; more precisely, they showed that if a complex rational map has a phase transition at some~$t < 0$, then there is a finite set of periodic points~$F$ such that there is a definite distance separating the Lyapunov exponents of periodic points in~$F$ and the Laypunov exponents of measures that do not charge~$F$.
Makarov and Smirnov also showed that this type of phase transition is removable in the following sense: The function obtained by omitting the measures that charge~$F$ in the supremum defining the geometric pressure function is real analytic on~$(- \infty, 0)$ and coincides with the geometric pressure function up to the phase transition.
}
Combining the results of Makarov and Smirnov with recent results of Przytycki and the second named author, it follows that for every real parameter~$c \neq - 2$ the map~$f_c$ has at most~$1$ phase transition; moreover, if~$f_c$ has a phase transition, then it occurs at some~$t > 0$.
See~\cite[{\S}A.$3$]{PrzRiv11} for the complex case and~\cite{PrzRivinterval} for the real case.

To describe the possible phase transitions for~$c \neq - 2$, it is useful to distinguish~$3$ complementary cases: $f_c$ uniformly hyperbolic, $f_c$ satisfying the \emph{Collet-Eckmann condition}:
$$ \chicrit \= \liminf_{n \to + \infty} \frac{1}{n} \log |Df_c^n(c)| > 0,\footnote{In the complex setting the Collet-Eckmann condition is usually formulated in such a way that a uniformly hyperbolic map satisfies the Collet-Eckmann condition by vacuity.
Here we use the usual terminology in the real setting, for which a uniformly hyperbolic map does not satisfy the Collet-Eckmann condition.} $$
and the remaining case, when~$f_c$ is not uniformly hyperbolic and does not satisfy the Collet-Eckmann condition.
The Collet-Eckmann condition is one of the strongest and most studied non-uniform hyperbolicity conditions in dimension~$1$, see for example~\cite{AviMor05,BenCar85,GaoShe1111,NowSan98,PrzRivSmi03} and references therein.
Benedicks and Carleson showed that the set of real parameters~$c$ such that~$f_c$ satisfies the Collet-Eckmann condition has positive Lebesgue measure, see~\cite{BenCar85}.
Moreover, Avila and Moreira showed that the set of real parameters~$c$ such that~$f_c$ is not uniformly hyperbolic and does not satisfy the Collet-Eckmann condition has zero Lebesgue measure, see~\cite{AviMor05} and also~\cite{GaoShe1111}.

When~$f_c$ is uniformly hyperbolic, the work of Sinai, Bowen and Ruelle can be adapted to show that the geometric pressure function is real analytic at every real number, see for example~\cite[\S$6.4$]{PrzUrb10}.
That is, if~$f_c$ is uniformly hyperbolic, then it has no phase transitions.

If~$f_c$ is not uniformly hyperbolic and does not satisfy the Collet-Eckmann condition, then the geometric pressure function is non-negative and vanishes for large values of~$t$, see~\cite[Theorem~A]{NowSan98} or~\cite[Corollary~$1.3$]{Riv1204} for the real case and~\cite[Main Theorem]{PrzRivSmi03} for the complex case.
Thus in this case~$f_c$ has a phase transition at the first zero of the geometric pressure function.
Note that this phase transition is associated to the lack of (non-uniform) expansion of~$f_c$.

This paper is focused in the remaining case, when~$f_c$ satisfies the Collet-Eckmann condition.
We show that, contrary to a widespread belief, such a map can have a phase transition at some~$t > 0$.
As a consequence, several dimension spectra and large deviation rate functions associated to such a~$f_c$ are not (expected to be) real analytic, see Remark~\ref{r:non-analytic spectra}.
In the complex setting it also follows that the corresponding integral means spectrum is not real analytic either.

Our construction is very flexible.
We give the simplest example here, of a ``first-order'' phase transition: The geometric pressure function is not differentiable at the phase transition.
In the companion paper~\cite{CorRivb} we modify our construction to obtain an ``high-order'' phase transition: The geometric pressure function is bounded from above and from below by smooth functions that coincide at the phase transition.
To the best of our knowledge it is the first example of a (transitive) smooth dynamical system having such an infinite contact-order phase transition.
Our construction is also robust: In every sufficiently small perturbation of the quadratic family there is a Collet-Eckmann parameter having a phase transition.

The quadratic maps studied here are largely inspired by the conformal Cantor sets with analogous properties studied by Makarov and Smirnov, see~\cite[\S$5$]{MakSmi03}.
There are however several important differences.
Most notably, the conformal Cantor set studied by Makarov and Smirnov is defined through a map having~$2$ affine branches, something that cannot be replicated in a complex polynomial or rational map.

These examples show that lack of (non-uniform) expansion is not the only source of phase transitions.\footnote{In some sense, the phase transitions studied here, as those studied by Makarov and Smirnov, are caused by the irregular behavior of the critical orbit.}
In fact, the quadratic maps studied here satisfy a property that is even stronger than the Collet-Eckmann condition: The critical point is non-recurrent.\footnote{This is usually called the ``Misiurewicz condition'' and it is known to imply the Collet-Eckmann condition, see~\cite{Mis81} for the real setting and~\cite{Man93} for the complex one.}
Thus, no slow recurrence condition, such as the one studied by Benedicks and Carleson~\cite{BenCar85} or by Yoccoz and by Pesin and Senti~\cite{PesSen08}, is sufficient to avoid phase transitions.

\subsection{Statements of results}
\label{ss:main statements}
We consider a set of real parameters close to~$c = - 2$, for which the critical point~$z = 0$ is mapped to a certain uniformly expanding set under forward iteration by~$f_c$, see~\S\ref{s:parameter plane} for details.
For such a parameter~$c$ we have~$f_c(c) > c$, the interval~$I_c \= [c, f_c(c)]$ is invariant by~$f_c$, and~$f_c$ is topologically exact on this set.
We consider both, the interval map~$f_c|_{I_c}$ and the complex quadratic polynomial~$f_c$ acting on its Julia set~$J_c$.

For a real parameter~$c$ denote by~$\sM_c^{\R}$ the space of probability measures supported on~$I_c$ that are invariant by~$f_c$.
For a measure~$\mu$ in~$\sM_c^{\R}$ denote by~$h_\mu(f_c)$ the measure-theoretic entropy of~$f_c$ with respect to~$\mu$ and for each~$t$ in~$\R$ put
$$ P_c^{\R}(t)
\=
\sup \left\{ h_\mu(f_c) - t \int \log |Df_c| \ d\mu \mid \mu \in \sM_c^{\R} \right\}, $$
which is finite.
The function~$P_c^{\R} : \R \to \R$ so defined is called the \emph{geometric pressure function of~$f_c|_{I_c}$}; it is convex and non-increasing.
An invariant probability measure supported on~$I_c$ is an \emph{equilibrium state of~$f_c|_{I_c}$ for the potential~$-t \log |Df_c|$}, if the supremum above is attained at this measure.

Similarly, denote by~$\sM_c^{\C}$ the space of probability measures supported on~$J_c$ that are invariant by~$f_c$ and for a measure~$\mu$ in~$\sM_c^{\C}$ denote by~$h_\mu(f_c)$ the measure-theoretic entropy of~$f_c$ with respect to~$\mu$.
The \emph{geometric pressure function $P_c^{\C} : \R \to \R$ of~$f_c|_{J_c}$} is defined by
$$ P_c^{\C}(t)
\=
\sup \left\{ h_\mu(f_c) - t \int \log |Df_c| \ d\mu \mid \mu \in \sM_c^{\C} \right\}. $$
An invariant probability measure supported on~$J_c$ is an \emph{equilibrium state of~$f_c|_{J_c}$ for the potential~$- t \log |Df_c|$} if the supremum above is attained at this measure.

Following the usual terminology in statistical mechanics, for a given~$t_* > 0$ the map~$f_c|_{I_c}$ (resp.~$f_c|_{J_c}$) has a \emph{phase transition at~$t_*$} if~$P_c^{\R}$ (resp.~$P_c^{\C}$) is not real analytic at~$t = t_*$.
As mentioned above, if~$c \neq -2$ and if~$f_c$ is not uniformly hyperbolic and does not satisfy the Collet-Eckmann condition, then~$f_c|_{I_c}$ (resp.~$f_c|_{J_c}$) has a phase transition at the first zero of~$P_c^{\R}$ (resp.~$P_c^{\C}$) and it has no other phase transitions.
In accordance with the usual interpretation of~$t > 0$ as the inverse of the temperature in statistical mechanics, we say that such a phase transition is of \emph{high temperature}.
For a real parameter~$c$ and for~$t_* > 0$ the map~$f_c|_{I_c}$ (resp. $f_c|_{J_c}$) has a \emph{low-temperature phase transition at~$t_*$}, if it has a phase transition at~$t_*$ and~$P_c^{\R}(t_*) < 0$ (resp. $P_c^{\C}(t_*) < 0$).
Note that if~$f_c|_{I_c}$ (resp.~$f_c|_{J_c}$) has a low-temperature phase transition, then~$f_c$ satisfies the Collet-Eckmann condition.
\begin{generic}[Main Theorem]
There is a real parameter~$c$ such that the critical point of~$f_c$ is non-recurrent and such that both, $f_c|_{I_c}$ and~$f_c|_{J_c}$ have a low-temperature phase transition.
Furthermore, the parameter~$c$ can be chosen so that each of the functions~$P_c^{\R}$ and~$P_c^{\C}$ is non-differentiable at the phase transition and so that each of the maps~$f_c|_{I_c}$ and~$f_c|_{J_c}$ has a unique equilibrium state at the phase transition.
\end{generic}
For the parameter~$c$ we use to prove the Main Theorem, we show that the equilibrium state at the phase transition is ergodic and mixing and that its measure-theoretic entropy is strictly positive, see Proposition~\ref{p:first-order phase transition} in~\S\ref{s:reduced statement}.
Combined with results of Young~\cite{You99} and Gouezel~\cite[\emph{Th{\'e}or{\`e}me}~$2.3.1$]{gouezelthesis}, our estimates imply that the decay of correlations of this measure is (at most) stretch exponential.

In the companion paper~\cite{CorRivb}, we use the results of this paper to show that there is a real parameter~$c$ and~$t_* > 0$ such that both, $f_c|_{I_c}$ and~$f_c|_{J_c}$ have a high-order phase transition at~$t = t_*$ and such that the functions~$P_c^{\R}$ and~$P_c^{\C}$ are bounded from above and from below by smooth functions that coincide at~$t = t_*$.
In that case there is no equilibrium state at~$t = t_*$, see~\cite[Corollary~$1.3$]{InoRiv12}.

\begin{rema}
\label{r:non-analytic spectra}
For a parameter~$c$ in~$\C$ the dimension spectrum for Lyapunov exponents of the complex quadratic polynomial~$f_c(z) = z^2 + c$ is essentially the Legendre transform of~$P_c^{\C}(t)$, see~\cite[Theorem~$1$]{GelPrzRam10} for a precise statement and~\cite{Pes97} for the general theory.
So, for a complex quadratic polynomial as in the Main Theorem the dimension spectrum for Lyapunov exponents is not real analytic.\footnote{The following argument shows that for~$c$ as in the Main Theorem, the Legendre transform of~$P_c^{\C}$ is not real analytic.
Since~$P_c^{\C}$ is not differentiable at the phase transition, there is an interval on which the Legendre transform of~$P_c^{\C}$ is affine.
So, if the Legendre transform was real analytic, then it would be affine on all of its domain of definition.
This can only happen if~$P_c^{\C}$ is affine up to the phase transition.
But \cite[Theorem~C]{MakSmi00} or \cite[Theorem~D]{PrzRiv11} imply that this is not the case.}
A similar behavior is expected for the dimension spectrum for Lyapunov exponents of an interval map as in the Main Theorem.\footnote{More precisely, we expect the dimension spectrum of Lyapunov exponents not to be real analytic at the left end point of the interval~$A$ appearing in~\cite[Theorem~A]{IomTod11}.}
The Legendre transform of~$P_c^{\R}$ (or~$P_c^{\C}$) is also related to the dimension spectrum for pointwise dimension and the rate function in certain large deviation principles; see for example~\cite[\S$5$]{MakSmi00} for the former and~\cite[Theorem~$1.2$ or~$1.3$]{KelNow92} and~\cite[Corollary~B.$4$]{PrzRiv11} for the latter.
So for a map as in the Main Theorem we expect these functions not to be real analytic either.
Finally, note that in the complex setting the integral means spectrum associated to~$f_c$ is an affine function of~$P_c^{\C}$, see~\cite[Lemma~$2$]{BinMakSmi03}.
So for a parameter~$c$ as in the Main Theorem the integral means spectrum associated to~$f_c$ is not real analytic.
\end{rema}

\subsection{Organization}
\label{ss:organization}
After recalling some well-known facts in~\S\ref{s:preliminaries}, we define in~\S\ref{s:parameter plane} the set of parameters~$\bigcup_{n = 3}^{+ \infty} \cK_n$, from which we choose the parameter fulfilling the properties in the Main Theorem.
In~\S\S\ref{s:parameter plane}, \ref{ss:uniform distortion bound} we show various combinatorial properties of the corresponding quadratic maps, as well as some distortion bounds and other preliminary estimates.
For~$n \ge 3$ and~$c$ in~$\cK_n$, the integer~$n$ indicates the time the forward orbit of~$c$ under~$f_c$ takes for entering a certain Cantor set~$\Lambda_c$ that is invariant by~$f_c^3$, see~\S\ref{ss:expanding Cantor set} for the definition of~$\Lambda_c$ and some of its properties.
The map~$f_c^3|_{\Lambda_c}$ is uniformly expanding and conjugated to the shift map acting on~$\{0, 1\}^{\N_0}$.
The set~$\cK_n$ is such that the function that to each~$c$ in~$\cK_n$ associates the itinerary of~$f_c^n(c)$ in~$\Lambda_c$ under~$f_c^3|_{\Lambda_c}$, is a bijection (Proposition~\ref{p:ps}).
Thus, within~$\cK_n$, we can uniquely prescribe the itinerary of the postcritical orbit.

For~$n \ge 3$ and~$c$ in~$\cK_n$, the map~$f_c^3$ has precisely~$2$ fixed points in~$\Lambda_c$, denoted by~$p(c)$ and~$\wtp(c)$.
They correspond to the symbols~$0$ and~$1$, respectively.
For large~$n$ and every~$c$ in~$\cK_n$, the derivative\ of~$f_c^3$ at~$p(c)$ is strictly larger than that at~$\wtp(c)$, see Appendix~\ref{s:appendix} and Proposition~\ref{p:ps}.
Similarly as in the example of Makarov and Smirnov, we consider a parameter~$c$ such that for every large integer~$k \ge 1$, the forward orbit of~$c$ under~$f_c$ up to a time~$k$ spends roughly~$\sqrt{k}$ of the time in the branch of~$f_c^3|_{\Lambda_c}$ corresponding to~$p(c)$ (of symbol~$0$), and the rest of the time in the other branch (of symbol~$1$).
An additional difficulty in our situation is that the map~$f_c^3|_{\Lambda_c}$ is non-linear, and thus in our estimates we have to deal with additional distortion terms.
We overcome this difficulty, in part, by choosing an itinerary having only large blocks of~$0$'s and~$1$'s, see Lemma~\ref{l:phased itinerary} for the precise definition of the itinerary.
Choosing~$n$ large also help us to overcome this difficulty.
Roughly speaking, in the example of Makarov and Smirnov this last choice corresponds to taking a small critical branch.\footnote{This is not entirely accurate, but it is a good first approximation.
By choosing~$n$ large we are essentially forced to consider the first return map to a smaller neighborhood of the critical point, and thus we have to deal with a larger set of orbits that never enter this set.
These extra orbits are not present in the example of Makarov and Smirnov.}

A step in proving that for a parameter~$c$ as above the geometric pressure function is not real analytic on all of~$(0, + \infty)$, is to show that this function is larger than or equal to~$t \mapsto - t \chicrit/2$.
We do this by exhibiting a sequence of periodic orbits whose Lyapunov exponents converge to~$\chicrit /2$, see~\S\ref{ss:critical line}.
The bulk of the proof of the Main Theorem, in \S\S\ref{ss:proof of Main Theorem}--\ref{s:estimating pressure}, is devoted to show that for a large value of~$t > 0$ the geometric pressure is less than or equal to~$- t \chicrit/2$.
This implies that the geometric pressure is in fact equal to~$- t \chicrit/2$, and therefore that the geometric pressure function coincides with the function~$t \mapsto - t \chicrit/2$ on some (right) half line.
Since at~$t = 0$ the geometric pressure is equal to the topological entropy and it is therefore strictly positive, it follows that the geometric pressure function cannot be real analytic on all of~$(0, + \infty)$.

To prove that for a large value of~$t > 0$ the geometric pressure is less than or equal to~$- t \chicrit/2$, we show, as in the example of Makarov and Smirnov, that the pressure function can be estimated using a certain ``postcritical series'', defined solely in terms of the derivatives of~$f_c$ along the forward orbit of~$c$ (Proposition~\ref{p:improved MS criterion} in~\S\ref{s:estimating pressure}).
To make this estimate we proceed in a different way than the example of Makarov and Smirnov.
We consider the pressure function as defined through the tree of preimages of the critical point.
An important step of the proof is to show that the dynamics is sufficiently expanding far away from the critical point (Proposition~\ref{p:landing derivatives} in~\S\ref{s:landing derivatives}), and thus that the geometric pressure is governed by those backward orbits that visit a given neighborhood of the critical point.
For a conveniently chosen neighborhood~$V_c$ of the critical point, we estimate the pressure of the backward orbits of the critical point that visit~$V_c$ using the first return map~$F_c$ of~$f_c$ to~$V_c$, and a certain~$2$ variables pressure function of~$F_c$.
This last pressure function depends on the geometric potential of~$F_c$ and the first return time function.
The neighborhood~$V_c$ and the first return map~$F_c$ are defined in~\S\ref{ss:induced map}, and the~$2$ variables pressure function of~$F_c$ is defined in~\S\ref{ss:Bowen type formula}.
The connection between the geometric pressure of~$f_c$ and the~$2$ variables pressure function of~$F_c$ is through a Bowen type formula that we state as Proposition~\ref{p:Bowen type formula} in~\ref{ss:Bowen type formula}.

The~$2$ variables pressure function of~$F_c$ is defined through a subadditive sequence in a standard way, see~\S\ref{ss:Bowen type formula}.
Thanks to the fact that our distortion bounds are independent of~$n$ and of~$c$ in~$\cK_n$, the first term of the subadditive sequence provides an estimate of the~$2$ variable pressure that is good enough for our purposes.
To estimate the first term of the subadditive sequence, in \S\ref{s:estimating pressure} we partition the components of the domain of~$F_c$ into ``levels'', according to the first return time to a certain neighborhood of~$\Lambda_c$.
The proof of Proposition~\ref{p:improved MS criterion} consists of showing that for each integer~$k \ge 0$, the contribution of the components of the domain of~$F_c$ of level~$k$ is equal to the~$k$-th term of the postcritical series, up to a multiplicative constant, see Lemma~\ref{l:estimating Z_1 by the postcritical series}.

We state a consequence of Proposition~\ref{p:improved MS criterion} as Proposition~\ref{p:first-order phase transition} in~\S\ref{s:reduced statement}, from which we deduce the Main Theorem in~\S\ref{ss:proof of Main Theorem}.
The proof of Proposition~\ref{p:first-order phase transition} is given in~\S\ref{s:estimating pressure}, after the proof of Proposition~\ref{p:improved MS criterion}.

\subsection{Acknowledgments}
We thank Weixiao Shen for a useful remark.
The first named author acknowledges partial support from FONDECYT grant 11121453, Anillo DYSYRF grant ACT 1103, MATH-AmSud grant DYSTIL, and Basal-Grant CMM PFB-03.
This article was completed while the second named author was visiting Brown University and the Institute for Computational and Experimental Research in Mathematics (ICERM).
He thanks both of these institutions for the optimal working conditions provided, and acknowledges partial support from FONDECYT grant 1100922.
The figures in this paper were made with Wolf Jung's program ``Mandel''. 
\section{Preliminaries}
\label{s:preliminaries}
We use~$\N$ to denote the set of integers that are greater than or equal to~$1$ and put~$\N_0 \= \N \cup \{ 0 \}$.

For an annulus~$A$ contained in~$\C$ we use~$\modulus(A)$ to denote the conformal modulus of~$A$.
\subsection{Koebe principle}
\label{ss:Koebe}
We use the following version of Koebe distortion theorem that can be found, for example, in~\cite{McM94}.
Given an open subset~$G$ of~$\C$ and a biholomorphic map~$f : G \to \C$, the \emph{distortion of~$f$} on a subset~$C$ of~$G$ is
$$ \sup_{x, y \in C} |Df(x)|/|Df(y)|. $$
\begin{generic}[Koebe Distortion Theorem]
For each~$A > 0$ there is a constant~$\Delta > 1$ such that for each topological disk~$\hW$ contained in~$\C$ and each compact set~$K$ contained in~$\hW$ and such that~$\hW \setminus K$ is an annulus of modulus at least~$A$, the following property holds: For each open topological disk~$U$ contained in~$\C$ and every biholomorphic map~$f : U \to \hW$, the distortion of~$f$ on~$f^{-1}(K)$ is bounded by~$\Delta$.
\end{generic}

\subsection{Quadratic polynomials, Green functions and B{\"o}ttcher coordinates}
\label{ss:quadratic polynomials}
In this subsection and the next we recall some basic facts about the dynamics of complex quadratic polynomials, see for instance~\cite{CarGam93} or~\cite{Mil06} for references.

For~$c$ in~$\C$ denote by~$f_c$ the complex quadratic polynomial
$$ f_c(z) = z^2 + c, $$
and by~$K_c$ the \emph{filled Julia set} of $f_c$; that is, the set of all points~$z$ in~$\C$ whose forward orbit under~$f_c$ is bounded in~$\C$.
The set~$K_c$ is compact and its complement is the connected set consisting of all points whose orbit converges to infinity in the Riemann sphere.
Furthermore, we have $f_c^{-1}(K_c) = K_c$ and~$f_c(K_c) = K_c$. 
The boundary~$J_c$ of~$K_c$ is the \emph{Julia set of~$f_c$}.

For a parameter~$c$ in~$\C$, the \emph{Green function of~$K_c$} is the function $G_c:\C \to [0,+\infty)$ that is identically~$0$ on~$K_c$ and that for~$z$ outside~$K_c$ is given by the limit
\begin{equation}
\label{def:Green function}
  G_c(z) = \lim_{n\rightarrow +\infty} \frac{1}{2^n} \log |f_c^n(z)| > 0.
\end{equation}
The function~$G_c$ is continuous, subharmonic, satisfies~$G_c \circ f_c = 2 G_c$ on~$\C$, and it is harmonic and strictly positive outside~$K_c$.
On the other hand, the critical values are bounded from above by~$G_c(0)$ and the open set
$$ U_c \= \{z\in \C \mid G_c(z) > G_c(0)\} $$
is homeomorphic to a punctured disk.
Notice that $G_c(c)=2G_c(0)$, thus~$U_c$ contains~$c$. 
Moreover, the Green's functions of~$f_c$ and~$f_{\overline{c}}$ are related by $G_{\overline{c}}(z) = G_c(\overline{z})$.

By B{\"o}ttcher's Theorem there is a unique conformal representation
\[
 \varphi_c: U_c
\rightarrow
\{z\in \C \mid |z| > \exp (G_c(0)) \},
\]
that conjugates~$f_c$ to $z \mapsto z^2$.
It is called \emph{the B{\"o}ttcher coordinate of~$f_c$} and satisfies $G_c = \log |\varphi_c|$.
Note that~$U_{\overline{c}} = \overline{U_c}$ and~$\varphi_{\overline{c}} = \overline{\varphi_c}$.

\subsection{External rays and equipotentials}
\label{ss:rays and equipotentials}
Let~$c$ be in~$\C$.
For~$v > 0$ the \emph{equipotential~$v$ of~$f_c$} is by definition~$G_c^{-1}(v)$.
A \emph{Green's line of~$G_c$} is a smooth curve on the complement of~$K_c$ in~$\C$ that is orthogonal to the equipotentials of~$G_c$ and that is maximal with this property. 
Given~$t$ in~$\R / \Z$, the \emph{external ray of angle~$t$ of~$f_c$}, denoted by~$R_c(t)$, is the Green's line of~$G_c$ containing
$$ \{ \varphi_c^{-1}(r \exp(2 \pi i t)) \mid \exp(G_c(0))< r < +\infty \}. $$
By the identity~$G_c \circ f_c= 2G_c$, for each~$v > 0$ and each~$t$ in~$\R / \Z$ the map~$f_c$ maps the equipotential~$v$ to the equipotential~$2v$ and maps~$R_c(t)$ to~$R_c(2t)$.
For~$t$ in~$\R / \Z$ the external ray~$R_c(t)$ \emph{lands at a point~$z$}, if~$G_c : R_c(t) \to (0, + \infty)$ is a bijection and if~$G_c|_{R_c(t)}^{-1}(v)$ converges to~$z$ as~$v$ converges to~$0$ in~$(0, + \infty)$.
By the continuity of~$G_c$, every landing point is in $J_c = \partial K_c$.

We use the following general lemma several times.
\begin{lemm}\label{lem:0}
Let~$c$ be a parameter in~$\C$, let~$t$ be in~$\R / \Z$ and suppose that the external ray~$R_c(t)$ lands at a point~$z_0$ of~$K_c$ different from~$c$; so~$f_c^{-1}(z_0)$ consists of~$2$ distinct points.
Then each point of~$f_c^{-1}(z_0)$ is the landing point of precisely~$1$ of the external rays~$R_c(t/2)$ or~$R_c((t + 1)/2)$.
\end{lemm}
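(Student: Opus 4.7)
The plan is to pull back the landing at $z_0$ through the two local inverse branches of $f_c$ near $z_0$. Since $z_0 \neq c$, the point $z_0$ is not the critical value, so there is an open disk $N$ containing $z_0$ that is disjoint from $\{c\}$ and whose preimage $f_c^{-1}(N)$ is the disjoint union of two topological disks $N_0$ and $N_1$, each mapped biholomorphically onto $N$ by $f_c$. Let $\psi_i : N \to N_i$ denote the corresponding local inverses, and let $w_i \= \psi_i(z_0)$, so that $f_c^{-1}(z_0) = \{w_0, w_1\}$.

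Next I would use the relation $G_c \circ f_c = 2 G_c$, which gives $f_c(R_c(t/2)) = f_c(R_c((t+1)/2)) = R_c(t)$ and shows that $f_c$ maps each of $R_c(t/2)$ and $R_c((t+1)/2)$ injectively onto $R_c(t)$, sending the equipotential $v/2$ to the equipotential $v$. Because $R_c(t)$ lands at $z_0$, for every sufficiently small $v > 0$ the point $G_c|_{R_c(t)}^{-1}(v)$ lies in $N$; applying $\psi_0$ and $\psi_1$ to the corresponding tail of $R_c(t)$ therefore produces two disjoint arcs, contained respectively in $N_0$ and $N_1$, which together with the portion of $f_c^{-1}(R_c(t))$ outside of $N_0 \cup N_1$ must coincide with the two rays $R_c(t/2)$ and $R_c((t+1)/2)$ (up to relabeling).

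Because $\psi_0$ and $\psi_1$ are continuous at $z_0$ and $G_c|_{R_c(t)}^{-1}(v) \to z_0$ as $v \to 0^+$, the arc inside $N_i$ converges to $w_i$, so the corresponding ray lands at $w_i$. This shows that $R_c(t/2)$ and $R_c((t+1)/2)$ both land, and that their landing points are precisely $w_0$ and $w_1$ in some order. Since $N_0 \cap N_1 = \emptyset$, the two rays land at distinct points of $f_c^{-1}(z_0)$, which gives the ``precisely $1$'' clause. The only subtlety I anticipate is verifying carefully that the pulled-back tails in $N_0$ and $N_1$ really are tails of $R_c(t/2)$ and $R_c((t+1)/2)$, which follows from the fact that these two external rays exhaust $f_c^{-1}(R_c(t))$ and are the only Green's lines of $G_c$ inside $f_c^{-1}(N) \setminus K_c$ mapped onto $R_c(t) \cap N$.
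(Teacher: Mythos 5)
Your proof is correct and takes essentially the same approach as the paper: since $z_0 \neq c$, pull back the landing tail of $R_c(t)$ through the two biholomorphic local inverse branches of $f_c$ near $z_0$, and identify the resulting arcs (using $G_c \circ f_c = 2G_c$ and $f_c^{-1}(R_c(t)) = R_c(t/2) \cup R_c((t+1)/2)$) as tails of $R_c(t/2)$ and $R_c((t+1)/2)$ converging to the two distinct preimages. Your write-up is somewhat more explicit than the paper's about why the pulled-back tails are genuinely tails of those two external rays, but the underlying argument is the same.
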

\proof
Since~$f_c^{-1}(z_0)$ consists of~$2$ distinct points, it is enough to show that each point~$z$ of~$f_c^{-1}(z_0)$ is the landing point of either~$R_c(t/2)$ or~$R_c((t + 1)/2)$.
Since~$z_0$ is different from~$c$, there is an open neighborhood~$U$ of~$z$ and an open neighborhood~$U_0$ of~$z_0$ such that~$f_c$ maps~$U$ biholomorphically to~$U_0$.
Reducing~$U$ and~$U_0$ if necessary, it follows that $f_c^{-1}(R_c(t))$ is contained in an external ray landing at~$z$.
It must be either~$R_c(t/2)$ or~$R_c((t + 1)/2)$.
\endproof

The \emph{Mandelbrot set~$\cM$} is the subset of~$\C$ of those parameters~$c$ for which~$K_c$ is connected.
The function
\[
 \begin{array}{cccl}
  \Phi : &\C \setminus \cM & \to & \C \setminus \cl{\D}\\
          &     c         &  \mapsto           & \Phi(c) \= \varphi_c(c)
 \end{array}
\]
is a conformal representation, see~\cite[VIII, \emph{Th{\'e}or{\`e}me}~$1$]{DouHub84}.
Since for each parameter~$c$ in~$\C$ we have~$\varphi_{\overline{c}} = \overline{\varphi_c}$, it follows that~$\Phi(\overline{c}) = \overline{\Phi(c)}$; that is, $\Phi$ is real.
For~$v > 0$ the \emph{equipotential~$v$ of~$\cM$} is by definition
$$ \cE(v) \= \Phi^{-1}(\{z\in \C \mid |z| = v \}). $$ 
On the other hand, for~$t$ in~$\R / \Z$ the set
$$ \cR(t) \= \Phi^{-1}(\{r \exp(2 \pi i t) \mid r > 1 \}). $$
is called the \emph{external ray of angle~$t$ of~$\cM$}.
We say that $\cR(t)$ \emph{lands at a point~$z$} in~$\C$ if~$\Phi^{-1} (r \exp(2\pi i t))$ converges to~$z$ as $r \searrow 1$.
When this happens~$z$ belongs to~$\partial \cM$.

\subsection{The wake~$1/2$}
\label{ss:wake 1/2}
In this subsection we recall a few facts that can be found for example in~\cite{DouHub84} or~\cite{Mil00c}.

Both external rays~$\cR(1/3)$ and~$\cR(2/3)$ of~$\cM$ land at the parameter~$c = -3/4$ and these are the only external rays of~$\cM$ that land at this point, see for example~\cite[Theorem~$1.2$]{Mil00c}.
In particular, the complement in~$\C$ of the set
$$ \cR(1/3) \cup \cR(2/3) \cup \{ - 3/4 \} $$
has~$2$ connected components; denote by~$\cW$ the connected component containing the point~$c = -2$ of~$\cM$.

For each parameter~$c$ in~$\cW$ the map~$f_c$ has~$2$ distinct fixed points; one of the them is the landing point of the external ray~$R_c(0)$ and it is denoted by~$\beta(c)$; the other one is denoted by~$\alpha(c)$.
The only external ray landing at~$\beta(c)$ is~$R_c(0)$.
Lemma~\ref{lem:0} implies that the only external ray landing at~$-\beta(c)$ is~$R_c(1/2)$.

For the following fact, see for example~\cite[Theorem~$1.2$]{Mil00c}.
\begin{theo}
\label{theo:wake 1/2}
Let~$c$ be a parameter in~$\cW$.
Then the only external rays of~$f_c$ landing at~$\alpha(c)$ are~$R_c(1/3)$ and~$R_c(2/3)$.
\end{theo}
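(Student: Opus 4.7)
The plan is to combine the parameter--dynamical correspondence at the boundary parameter $c_0 = -3/4$ with a combinatorial argument on the doubling map $t \mapsto 2t$ acting on $\R/\Z$.

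First I would show that for every $c$ in $\cW$ the rays $R_c(1/3)$ and $R_c(2/3)$ land at $\alpha(c)$. Since the parameter rays $\cR(1/3)$ and $\cR(2/3)$ both land at $c_0 = -3/4$, the classical parameter--dynamical correspondence at rational angles implies that for $c = c_0$ the dynamical rays $R_{c_0}(1/3)$ and $R_{c_0}(2/3)$ both land at a parabolic fixed point of $f_{c_0}$. For $c$ moving into $\cW$ this parabolic fixed point perturbs to a repelling fixed point; a standard stability argument (the $\lambda$-lemma, using that this fixed point stays repelling throughout $\cW$) then shows that $R_c(1/3)$ and $R_c(2/3)$ continue to land at a common fixed point of $f_c$ for every $c$ in $\cW$. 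Because the only ray landing at $\beta(c)$ is $R_c(0)$, this common landing point must equal $\alpha(c)$.

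Next I would rule out any further rays landing at $\alpha(c)$. Let $S \subset \R/\Z$ denote the set of angles $t$ for which $R_c(t)$ lands at $\alpha(c)$. Since $f_c$ fixes $\alpha(c)$ and maps $R_c(t)$ to $R_c(2t)$, the set $S$ is invariant under doubling, and since $\alpha(c)$ is repelling, Douady's theorem guarantees that $S$ is finite. Distinct external rays are pairwise disjoint Jordan arcs all accumulating at $\alpha(c)$, so doubling acts on $S$ preserving the cyclic order inherited from $\R/\Z$; the orbit-portrait theorem of Milnor then shows that doubling acts on $S$ as a cyclic rotation of some rotation number $p/q$ in lowest terms, with every element of $S$ having period $|S| = q$ under doubling. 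Because $1/3 \in S$ has period exactly $2$, we conclude $|S| = 2$ and hence $S = \{1/3, 2/3\}$.

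The main obstacle is the first step: transferring the landing information from the parabolic boundary parameter $c_0 = -3/4$ to all of $\cW$. This relies on the parameter--dynamical correspondence at parabolic parameters together with the persistence of rays at repelling periodic cycles; both are classical but substantive inputs, typically cited from \cite{DouHub84} or \cite{Mil00c}. Once the first step is in place, the second step is a clean combinatorial consequence, reducing to the computation that $1/3$ has period $2$ under the doubling map.
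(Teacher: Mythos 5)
The paper does not actually prove Theorem~\ref{theo:wake 1/2}; it cites it as Theorem~1.2 of~\cite{Mil00c}, so there is no ``paper's own proof'' to compare against. Your sketch is a reasonable unwinding of what that citation encompasses: the two steps you outline (transfer the landing from the parabolic root~$c_0=-3/4$ to all of~$\cW$, then pin down the full orbit portrait by a combinatorial rotation argument) are exactly the ingredients in Milnor's treatment. Your second step is clean: once one knows that the set~$S$ of angles landing at~$\alpha(c)$ is a single cycle on which doubling acts as a cyclic rotation, every angle in~$S$ shares the period of~$1/3$, namely~$2$, and the only angles of period~$2$ under doubling are~$1/3$ and~$2/3$.

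Two imprecisions in your first step are worth naming, since they are where the real work is. First, the $\lambda$-lemma governs holomorphic motions of subsets of the Julia set and does not by itself give persistence of ray landing; the tool you actually need is the stability of rational external rays landing at repelling periodic points under holomorphic perturbation (Goldberg--Milnor, or~\cite{DouHub84}), which is a related but distinct statement. Second, and more substantively, you propagate the landing from the parabolic parameter~$c_0=-3/4$ into~$\cW$, but ray landing is \emph{not} stable across parabolic parameters in general (this is precisely the mechanism behind wake boundaries). The standard fix is to first establish the landing at a single interior parameter of~$\cW$ --- for example at~$c=-1$ by a direct computation, or at a parameter on the external ray~$\cR(1/3)$ just outside~$\cM$ using the correspondence between the dynamical position of the critical value and the parameter angle --- and only then invoke stability at repelling points to spread the conclusion over the connected set~$\cW$, along which~$\alpha(c)$ stays repelling because~$\cW$ is disjoint from the closure of the main cardioid. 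You flag this step yourself as the main obstacle, and you are right that it is where a genuinely careful argument is needed; as written it glosses over the one place where naive continuity would fail.
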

For~$c$ in~$\cW$, the complement of~$R_c(1/3) \cup R_c(2/3) \cup \{ \alpha(c) \}$ in~$\C$ has~$2$ connected components; on containing~$- \beta(c)$ and~$z = c$, and the other one containing~$\beta(c)$ and~$z = 0$.
On the other hand, the point~$\alpha(c)$ has~$2$ preimages by~$f_c$: Itself and~$\talpha(c)
\= - \alpha(c)$.
Together with Lemma~\ref{lem:0}, the theorem above implies that~$R_c(1/6)$ and~$R_c(5/6)$ are the only external rays landing at~$\talpha(c)$.

\begin{theo}[\cite{DouHub84}, VIII, \emph{Th{\'e}or{\`e}me}~$2$ and XIII, \S$1$]
\label{DHrays}
Let~$p$ and~$q$ be integers without common factors, with~$q$ even.
Then the external ray~$\cR(p/q)$ of~$\cM$ lands and the landing point~$c$ is such that the critical point of~$f_c$ is eventually periodic but not periodic and such that the critical value~$c$ of~$f_c$ is the landing point of the external ray~$R_c(p/q)$ of~$f_c$.
Conversely, if~$c$ is a parameter in~$\C$ such that the critical point of~$f_c$ is eventually periodic but not periodic, then there are integers~$p$ and~$q$ without common factors and with~$q$ even, such that the critical value~$c$ of~$f_c$ is the landing point of~$R_c(p/q)$; moreover, every external ray of~$f_c$ landing at~$c$ is of this form.
In this case the parameter~$c$ is the landing point of the external ray~$\cR_c(p/q)$ of~$\cM$.
\end{theo}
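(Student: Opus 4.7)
The plan is to prove the two directions using the parameter-ray landing theorem of the Orsay notes~\cite{DouHub84} together with the standard dictionary between doubling of angles and the dynamics of~$f_c$ on external rays.

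First, if~$\gcd(p,q)=1$ and~$q$ is even, then~$p/q$ is \emph{strictly preperiodic} under the doubling map~$\theta\mapsto 2\theta \pmod 1$; this is the number-theoretic observation that the periodic angles for doubling are exactly those with odd denominator in lowest terms. Let~$k \ge 1$ and~$n \ge 1$ be its preperiod and period under doubling. The core of the forward direction is to show that~$\cR(p/q)$ lands at a single point~$c_0 \in \partial \cM$. I would establish this by a holomorphic-motion argument: on the (nonempty) open set of parameters~$c$ for which~$R_c(p/q)$ lands at a repelling preperiodic point, the landing point depends holomorphically on~$c$ by the Implicit Function Theorem applied to~$f_c^{k+n}(z)=f_c^{k}(z)$, and Yoccoz's inequality on the combinatorial sizes of limbs attached to repelling (pre)periodic points gives an \emph{a priori} bound forcing this holomorphic motion to extend continuously to the closure of~$\cR(p/q)\cap \cM$. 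At the limit value~$c_0$ one verifies, by tracking the dynamical ray~$R_c(p/q)$ as $c$ moves radially along~$\cR(p/q)$, the ``Douady magic'' that~$R_{c_0}(p/q)$ lands at the critical value~$c_0$ itself. Since~$p/q$ is strictly preperiodic under doubling and~$f_{c_0}$ acts on rays by angle-doubling, the forward orbit of~$c_0$ must land on a repelling periodic orbit in finitely many steps while never being periodic; hence the critical point~$0$ of~$f_{c_0}$ is eventually periodic but not periodic.

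For the converse, suppose that the critical point of~$f_c$ is eventually periodic but not periodic. Since no nonrepelling cycle can absorb the critical orbit while leaving it strictly preperiodic, the eventual periodic orbit must be repelling, and in particular the critical value~$c$ is itself a strictly preperiodic repelling point of~$f_c$. A theorem of Douady--Sullivan--Yoccoz asserts that at a repelling preperiodic point of a polynomial Julia set only finitely many external rays land, all with rational angles whose doubling orbits land on the corresponding periodic cycle of angles. Hence there exist angles~$\theta_1,\ldots,\theta_r \in \Q/\Z$, all strictly preperiodic under doubling and therefore of the form~$p_i/q_i$ with~$q_i$ even, such that each~$R_c(\theta_i)$ lands at~$c$. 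Applying the forward direction to each~$p_i/q_i$ produces a Misiurewicz parameter~$c_0^{(i)}$ at which~$\cR(p_i/q_i)$ lands and at which~$R_{c_0^{(i)}}(p_i/q_i)$ lands at the critical value; the uniqueness of the holomorphic motion of the landing point of~$R_c(p_i/q_i)$, extended across its natural domain, then forces~$c_0^{(i)} = c$.

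\textbf{Main obstacle.} The deepest ingredient is the parameter-ray landing theorem: showing that~$\cR(p/q)$ actually converges rather than merely accumulating on a nontrivial connected subset of~$\partial \cM$. This requires Yoccoz's inequality together with a careful analysis of how the holomorphic motion~$c \mapsto \{\text{landing point of } R_c(p/q)\}$ extends across the boundary of its natural domain of definition. Once this is in hand, the identification of the landing parameter as a Misiurewicz point and the converse statement follow by routine unwinding of the correspondence between the doubling map on angles and the action of~$f_c$ on external rays.
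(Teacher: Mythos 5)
The paper does not prove this theorem; it cites it directly from Douady--Hubbard~\cite{DouHub84} (VIII, \emph{Th\'eor\`eme}~2 and XIII, \S1) and relies on it as an external black box. So there is no in-paper proof to compare against. What I can assess is whether your sketch would work on its own terms, and there I see one genuine gap.

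Your converse direction is incomplete at the last step. You correctly reduce to the following: given a Misiurewicz parameter~$c$ and an angle~$\theta_i = p_i/q_i$ such that~$R_c(\theta_i)$ lands at the critical value~$c$, you invoke the forward direction to obtain a Misiurewicz parameter~$c_0^{(i)}$ (the landing point of~$\cR(\theta_i)$) for which~$R_{c_0^{(i)}}(\theta_i)$ lands at the critical value~$c_0^{(i)}$. You then assert that ``the uniqueness of the holomorphic motion of the landing point of~$R_c(p_i/q_i)$, extended across its natural domain, then forces~$c_0^{(i)} = c$.'' This is not an argument: the holomorphic motion of the landing point is defined off the bifurcation locus, and there is no a priori continuation across the boundary that would identify two distinct Misiurewicz parameters sharing the same dynamical-ray combinatorics at the critical value. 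What is actually needed here is a rigidity statement --- that a quadratic Misiurewicz polynomial is uniquely determined by the (finite, rational) portrait of external rays landing at its critical value, a consequence of Thurston rigidity or, for quadratics, of an explicit algebraic/counting argument as in the Orsay notes. Without this ingredient the identification~$c_0^{(i)} = c$ does not follow.

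A secondary point: your forward direction leans on Yoccoz's inequality to extend the holomorphic motion of the preperiodic point continuously to the closure of~$\cR(p/q)$ and deduce landing of the parameter ray. That is a genuinely different route from Douady--Hubbard, who argue by a perturbative analysis near candidate Misiurewicz parameters and a counting comparison between strictly preperiodic angles and Misiurewicz parameters of matching preperiod and period. Yoccoz's a priori bounds can indeed be used to prove local connectivity of~$\partial \cM$ at Misiurewicz points (and hence landing of the rays there), but deducing the landing of a \emph{specific} preperiodic parameter ray from those bounds alone still requires a nontrivial bookkeeping step relating parameter-puzzle shrinking to the accumulation set of~$\cR(p/q)$; as written, your sketch treats this as routine when it is the deepest part of the proof, as you yourself flag. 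Finally, the remark that a strictly preperiodic critical orbit forces all cycles to be repelling is true but deserves an actual argument (e.g., empty interior of~$K_c$ plus Ma\~n\'e for Cremer points) rather than the one-line gloss you give.
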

Note that for the parameter~$c = -2$ we have~$c = - \beta(c)$, so the theorem above implies that~$\cR(1/2)$ is the only external ray of~$\cM$ that lands at~$- 2$.

\subsection{Yoccoz puzzles and para-puzzle}
\label{ss:puzzles}
In this subsection we recall the definitions of Yoccoz puzzle and para-puzzle.
We follow \cite{Roe00}.

\begin{defi}[Yoccoz puzzles]
Fix~$c$ in~$\cW$ and consider the open region $X_c \= \{z\in \C \mid G_c(z) <
1\}$. 
The \emph{Yoccoz puzzle of~$f_c$} is given by the following sequence of
graphs~$(I_{c, n})_{n = 0}^{+ \infty}$ defined for~$n = 0$ by:
\[
 I_{c,0} \= \partial X_c \cup (X_c \cap \cl{R_c(1/3)} \cap \cl{R_c(2/3)})
\]
and for~$n \ge 1$ by~$I_{c,n} \= f_c^{-n}(I_{c,0})$.
The \emph{puzzle pieces of depth~$n$} are the connected components of $f_c^{-n}(X_c) \setminus I_{c,n}$.
The puzzle piece of depth~$n$ containing a point~$z$ is denoted by~$P_{c,n}(z)$.
\end{defi}

Note that for a real parameter~$c$, every puzzle piece intersecting the real line is invariant under complex conjugation.
Since puzzle pieces are simply-connected, it follows that the intersection of such a puzzle piece with~$\R$ is an interval.

\begin{defi}[Yoccoz para-puzzles\footnote{In contrast with~\cite{Roe00}, we only consider para-puzzles contained in~$\cW$.}]
Given an integer~$n \ge 0$ put
$$ J_n
\=
\{t\in [1/3,2/3] \mid 2^n t ~ (\mathrm{mod}\, 1) \in \{1/3,2/3\} \}, $$
let~$\cX_n$ be the intersection of~$\cW$ with the open region in the parameter plane bounded by the equipotential~$\cE(2^{-n})$ of~$\cM$, and put
\[
 \cI_{n}
\=
\partial \cX_n \cup \left( \cX_n \cap \bigcup_{t\in J_n} \cl{\cR(t)} \right).
\]
Then the \emph{Yoccoz para-puzzle of~$\cW$} is the sequence of graphs~$(\cI_n)_{n = 0}^{+ \infty}$.
The \emph{para-puzzle pieces of depth~$n$} are the connected components of $\cX_n \setminus \cI_n$.
The para-puzzle piece of depth~$n$ containing a parameter~$c$ is denoted by~$\cP_n(c)$.  
\end{defi}
Observe that there is only~$1$ para-puzzle piece of depth~$0$ and only~$1$ para-puzzle piece of depth~$1$; they are bounded by the same external rays but different equipotentials.
Both of them contain~$c = - 2$.

\begin{defi}[Holomorphic motion] 
Let~$\cC$ be a complex manifold and fix~$c_0$ in~$\cC$.
Given a subset~$Z$ of~$\C$, a map
$$ h : \cC \times Z \to \cC \times \C $$
of the form $(c, z) \mapsto (c, h^c(z))$ is a \emph{holomorphic motion based at~$c_0$} if
$h^{c_0}$ is the identity on~$Z$, if for each~$z$ in~$Z$ its restriction to
$\cC \times \{ z \}$ is holomorphic and if for each $c \in \cC$
its restriction to $\{ c \} \times Z$ is injective.
\end{defi}

See~\cite{Roe00} for a reference to the following lemma; the statement here is slightly different from the statement in~\cite{Roe00} since we extend the domain of definition of the holomorphic motions, but the proof is the same. 
For each integer~$n \ge 1$, put
$$ V_n \= \{w \in \C \mid \log^+|w|\ge 2^{-n}\}. $$
\begin{lemm}
\label{lem:hm}
Let~$n \ge 0$ be an integer and~$c_0$ a parameter contained in a para-puzzle of
depth~$n$.
Then there exists a holomorphic motion
\[
\begin{array}{cccl}
 h_n:  & \cP_n(c_0) \times (I_{c_0,n+1}\cup \varphi_{c_0}^{-1}(V_{n + 1})) &
\to & \cP_n(c_0) \times \C \\
       &            (c,z)               & \mapsto & h_n(c,z) \= (c, h_n^c(z))
\end{array} 
\]
such that for every~$c$ in~$\cP_n(c_0)$ the function $h_n^c$ is an extension of the
restriction of $\varphi_c^{-1}\circ \varphi_{c_0}$ to $I_{c_0,n+1} \cup \varphi_{c_0}^{-1}(V_{n + 1})$ that satisfies~$I_{c,n+1} = h_n^c(I_{c_0,n+1})$. 
Moreover, when $n\ge 1$ the map~$h_n$ coincides with~$h_{n-1}$ on $\cP_n(c_0) \times (I_{c_0,n} \cup \varphi_{c_0}^{-1}(V_n))$ and for each~$c$ in~$\cP_n(c_0)$
we have $f_c \circ h_n^c = h_{n - 1}^c \circ f_{c_0}$ on~$I_{c_0,n+1} \cup \varphi_{c_0}^{-1}(V_{n + 1})$.
\end{lemm}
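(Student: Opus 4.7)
The proof is by induction on~$n$, following the pattern of Roesch~\cite{Roe00}. The strategy is to use the B{\"o}ttcher coordinate to define the motion in a neighborhood of infinity (where it is holomorphic in~$c$ by B{\"o}ttcher's theorem), and to extend it inwards by successively lifting under the dynamics. The key combinatorial input is that the para-puzzle condition $c \in \cP_n(c_0)$ freezes the topological type of the puzzle graph.

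Base case $n = 0$: On $\varphi_{c_0}^{-1}(V_1)$, set $h_0^c(z) \= \varphi_c^{-1} \circ \varphi_{c_0}(z)$. This is holomorphic in~$c$ because the map $c \mapsto \varphi_c$ depends holomorphically on~$c$ on the complement of the Mandelbrot set (via $\Phi$) and, more generally, on every para-puzzle piece. To extend to the graph $I_{c_0,1} = f_{c_0}^{-1}(I_{c_0,0})$, note that $I_{c,1}$ is built from arcs along the equipotential $\{G_c = 1/2\}$ (moved by the B{\"o}ttcher formula) together with pieces of the external rays $R_c(t)$ for $t \in \{1/6, 1/3, 2/3, 5/6\}$, joined at the points $\alpha(c)$ and $\talpha(c)$. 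By Theorem~\ref{theo:wake 1/2} and the remarks following it, these four rays land at $\alpha(c)$ and $\talpha(c)$ for every $c \in \cW$; moreover both landing points are repelling periodic and hence move holomorphically on $\cW \setminus \{-3/4\} \supset \cP_0(c_0)$. Parametrizing the bounded segments by their potential yields a holomorphic motion of the full graph, and injectivity is immediate from the fact that the arcs meet only at their endpoints.

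Inductive step: Assume $h_{n-1}$ has been constructed. For $z \in \varphi_{c_0}^{-1}(V_{n+1})$ set $h_n^c(z) \= \varphi_c^{-1} \circ \varphi_{c_0}(z)$. For $z \in I_{c_0,n+1}$, we have $f_{c_0}(z) \in I_{c_0,n}$, so $w \= h_{n-1}^c(f_{c_0}(z))$ is well-defined and lies in $I_{c,n}$. The para-puzzle condition $c \in \cP_n(c_0)$ guarantees that the critical point $0$ of $f_c$ is not in $I_{c,n+1}$, so $f_c$ has exactly two preimages of $w$, both varying holomorphically in~$c$ by the implicit function theorem. Define $h_n^c(z)$ to be the unique preimage obtained by continuous extension from $z$ at $c = c_0$; connectedness of $\cP_n(c_0)$ and the absence of critical collisions make this choice globally coherent. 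Injectivity of $h_n^c$ in~$z$ follows from the injectivity of $h_{n-1}^c$ combined with the fact that $f_c$ is injective on each component of $f_c^{-1}(I_{c,n})$, i.e., on each puzzle piece of depth $n+1$. The functional equation $f_c \circ h_n^c = h_{n-1}^c \circ f_{c_0}$ and the compatibility with $h_{n-1}$ on $\varphi_{c_0}^{-1}(V_n)$ are built into the construction.

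The main obstacle is verifying that the branch of $f_c^{-1}$ chosen in the lifting step can be continued coherently across the entire para-puzzle piece $\cP_n(c_0)$. Equivalently, one must rule out that the critical orbit of $f_c$ crosses the puzzle graph as $c$ varies in $\cP_n(c_0)$, since such a crossing would force the holomorphic branch to bifurcate. By Theorem~\ref{DHrays} the critical value can land on a rational-angle external ray only at isolated parameters on the boundaries of para-puzzle pieces, so the interior is free of such obstructions. This combinatorial stability, which is the very definition of a para-puzzle piece, is what upgrades the local implicit-function argument into the global holomorphic motion claimed in the lemma.
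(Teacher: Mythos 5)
Your proposal is correct and follows the standard argument. The paper itself does not include a proof of this lemma: it points to Roesch~\cite{Roe00} and remarks that the only change is the enlarged domain $I_{c_0,n+1}\cup\varphi_{c_0}^{-1}(V_{n+1})$, with ``the proof... the same.'' Your reconstruction---defining the motion near infinity via the B{\"o}ttcher coordinate, inducting by lifting through $f_c$, and using that the parameter lying in $\cP_n(c_0)$ is precisely the statement that the critical value does not cross the depth-$n$ dynamical graph (via Douady--Hubbard and the equivalence between parameter-plane and dynamical-plane rays/equipotentials), so that the lift of the depth-$(n+1)$ graph is unobstructed and single-valued on the simply connected piece $\cP_n(c_0)$---is exactly the argument the reference carries out. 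The compatibility of the B{\"o}ttcher part with the graph part along the equipotential $\{G_{c_0}=2^{-(n+1)}\}$, and the functional equation $f_c\circ h_n^c=h_{n-1}^c\circ f_{c_0}$, come out of the construction as you note. The one point you gloss over is that the four rays in $I_{c,1}$ land at $\alpha(c)$, $\talpha(c)$ because these are repelling (and continue to do so stably) throughout $\cW$, not merely because $\alpha(c)$, $\talpha(c)$ are algebraic in $c$; but you do cite Theorem~\ref{theo:wake 1/2} for this, so the argument is complete in substance.
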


\section{Parameters}
\label{s:parameter plane}
In this section we study the set of parameters from which we choose the parameter in the Main Theorem and at the same time introduce some notation used in the rest of the paper.

Given an integer~$n \ge 3$, let~$\cK_n$ be the set of all those real parameters~$c$ such that the following properties hold.
\begin{enumerate}
\item[1.]
We have $c < 0$ and for each~$j$ in~$\{1, \ldots, n - 1 \}$ we have~$f_c^j(c) > 0$.
\item[2.]
For every integer~$k \ge 0$ we have
$$ f_c^{n + 3k + 1}(c) < 0
\text{ and }
f_c^{n + 3k + 2}(c) > 0. $$
\end{enumerate}

Note that for a parameter~$c$ in~$\cK_n$ the critical point of~$f_c$ cannot be asymptotic to a non-repelling periodic point,  see~\cite[\S$8$]{MilThu88}.
This implies that all the periodic points of~$f_c$ in~$\C$ are hyperbolic repelling and therefore that~$K_c = J_c$, see~\cite{Mil06}.
On the other hand, we have~$f_c(c) > c$ and the interval~$I_c = [c, f_c(c)]$ is invariant by~$f_c$.
This implies that~$I_c$ is contained in~$J_c$ and hence that for every real number~$t$ we have~$P_c^{\R}(t) \le P_c^{\C}(t)$.
Note also that~$f_c|_{I_c}$ is not renormalizable, so~$f_c$ is topologically exact on~$I_c$, see for example~\cite[Thoerem~III.$4$.$1$]{dMevSt93}.

Since for~$c$ in~$\cK_n$ the critical point of~$f_c$ is not periodic, we can define the sequence~$\iota(c)$ in~$\{0, 1 \}^{\N_0}$ for each~$k \ge 0$ by
$$ \iota(c)_k
\=
\begin{cases}
0 & \text{ if }  f_c^{n + 3k}(c) < 0; \\
1  & \text{ if } f_c^{n + 3k}(c) > 0.
\end{cases} $$

The remainder of this section is devoted to prove the following proposition.
\begin{prop}
\label{p:ps}
For each integer~$n \ge 3$ the set~$\cK_n$ is a compact subset of
$$ \cP_n(-2) \cap (-2, -3/4) $$
and for every sequence~$\underline{x}$ in~$\{0,1\}^{\N_0}$ there is a unique parameter~$c$ in~$\cK_n$ such that~$\iota(c) = \underline{x}$.
Finally, for each~$\delta > 0$ there is~$n_1 \ge 3$ such that for each integer~$n \ge n_1$ the set~$\cK_n$ is contained in the interval~$(-2, -2 + \delta)$.
\end{prop}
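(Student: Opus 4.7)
The proposition contains four assertions: $(i)$ $\cK_n\subset(-2,-3/4)$, $(ii)$ $\cK_n\subset\cP_n(-2)$, $(iii)$ bijectivity of $\iota$ onto $\{0,1\}^{\N_0}$, and $(iv)$ compactness together with the shrinking to $\{-2\}$. I would tackle them in this order.

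For $(i)$, I would rule out both endpoints. If $c\in[-3/4,1/4]$ then $f_c$ has a non-repelling real fixed point in $[0,1]$ that traps the critical orbit, so the sign of $f_c^j(c)$ is eventually constant, incompatible with condition~$2$. If $c\le -2$, a simple induction using that $f_c$ maps $[2,+\infty)$ into itself shows $f_c^j(c)\ge 2>0$ for all $j\ge 1$, again contradicting condition~$2$. This pins $\cK_n$ inside $\cW\cap\R=(-2,-3/4)$. For $(ii)$, at $c_0=-2$ the critical orbit is $-2,2,2,\ldots$, and its Yoccoz puzzle itinerary up to depth~$n$ is determined by a single sign pattern; since real puzzle pieces at depth~$n$ meet $\R$ in intervals cut by $0$ and the real preimages of $\alpha(c)$, condition~$1$ precisely says that the first $n$ puzzle labels of the orbit of~$c\in\cK_n$ agree with those at~$c_0$. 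The holomorphic motion of Lemma~\ref{lem:hm} then transports the puzzle of $c_0$ to that of $c$, placing $c$ in $\cP_n(-2)$.

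For $(iii)$, injectivity follows from Milnor--Thurston monotonicity in the real quadratic family (see e.g.~\cite[\S III.4]{dMevSt93}): on $(-2,1/4)$ the kneading sequence $(\operatorname{sign} f_c^j(c))_{j\ge 1}$ strictly determines $c$, and on $\cK_n$ this whole sequence is encoded by conditions~$1$ and~$2$ together with $\iota(c)$. For surjectivity, given $\underline x\in\{0,1\}^{\N_0}$ I would consider the nested family of nonempty compact sets $\cK_n^{(m)}(\underline x)\subset[-2,-3/4]$ consisting of parameters satisfying condition~$1$, condition~$2$ for $k\le m$, and having $\operatorname{sign} f_c^{n+3k}(c)$ prescribed by $x_k$ for $k\le m$ (read non-strictly). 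Fullness of the real quadratic family makes each $\cK_n^{(m)}(\underline x)$ nonempty, so the nested intersection contains a parameter $c$ realizing $\underline x$. For $(iv)$, compactness is obtained by showing that every accumulation point of $\cK_n$ is itself in $\cK_n$: any limit $c^*$ has a prescribed (extended) itinerary $\underline x^*$ and by injectivity must coincide with the unique realization of $\underline x^*$ in $\cK_n$. The shrinking statement uses that $c_0=-2$ is a Misiurewicz parameter (its critical value lands at the repelling fixed point $2$ after one iterate), so the classical shrinkage of puzzle pieces at Misiurewicz parameters (cf.~\cite{Roe00}) gives $\diam \cP_n(-2)\to 0$ as $n\to\infty$; combined with $\cK_n\subset\cP_n(-2)$ and $-2\in\overline{\cP_n(-2)}$ this yields the final claim.

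I expect the main obstacle to be surjectivity in $(iii)$: producing, for every infinite itinerary $\underline x$, a real parameter that genuinely satisfies \emph{all} the strict inequalities in the definition of $\cK_n$. The nested-intersection argument yields a candidate parameter, but one must rule out that the limit is a Misiurewicz parameter where some $f_c^j(c)=0$, which would degenerate a strict inequality into an equality. Resolving this requires careful tracking of the combinatorial realizations alongside Milnor--Thurston monotonicity, and is conceptually close to the para-puzzle bookkeeping already present in~\cite{Roe00}.
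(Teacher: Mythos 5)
Your overall decomposition matches the paper's in its essentials — both proofs ultimately lean on Milnor--Thurston monotonicity/realization for the bijectivity, and on the geometry of the parapuzzle for the localization. But two of your steps diverge from (and are weaker than) what the paper does, and one of them contains a genuine gap.

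The compactness argument is where the real problem lies. You propose to show that any accumulation point $c^*$ of $\cK_n$ "has a prescribed (extended) itinerary $\underline{x}^*$ and by injectivity must coincide with the unique realization of $\underline{x}^*$ in $\cK_n$." This is circular: the uniqueness statement you want to invoke applies to parameters already known to lie in $\cK_n$, which is exactly what you are trying to establish for $c^*$. Worse, before you know $c^*\in\cK_n$ you do not even know that the itinerary of $c^*$ is well-defined, since some $f_{c^*}^j(c^*)$ could vanish — the very degeneracy you flag later as a worry for surjectivity. The paper sidesteps all of this with the observation that
$$ \cK_n = \{\, c \in [-2, \halpha_{n-1}] \mid f_c^n(c) \in \Lambda_c \,\}, $$
which is manifestly a closed condition (hence compact): $\Lambda_c$ is a compact set moving continuously in $c$ via the holomorphic motion, it is bounded away from $0$ and its preimages, and membership of $f_c^n(c)$ in $\Lambda_c$ automatically forces all the strict sign inequalities in the definition of $\cK_n$. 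This single identity also handles the degeneracy concern you raise for surjectivity: a parameter realizing a given itinerary sits in a closed set on which no $f_c^j(c)$ can be $0$, so there is no "degenerating" to worry about. You should aim to reformulate $\cK_n$ in this way rather than arguing by accumulation points and injectivity.

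Two smaller points. First, for $(i)$ your exclusion of $c\le -2$ leans on the false claim that $f_c$ maps $[2,+\infty)$ into itself; for $c<-2$ one has $f_c(2)=4+c<2$. The conclusion still holds, but the correct mechanism is that $f_c(c)=c^2+c>\beta(c)$ for $c<-2$ and $f_c$ is increasing past $\beta(c)$, so the critical orbit stays positive. The paper avoids the case split entirely via monotonicity of the kneading invariant, which at once gives $\cK_n\subset(-2,\halpha_{n-1})$, whence both the $(-2,-3/4)$ containment (since $\halpha_1=-3/4$) and, via the explicit description of $\cP_n(-2)$ from Lemma~\ref{lem:para-puzzle pieces}, the containment in $\cP_n(-2)$. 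Second, for the shrinking of $\cP_n(-2)$, your appeal to "classical shrinkage of puzzle pieces at Misiurewicz parameters" with a pointer to~\cite{Roe00} is not really a citation that covers the parapuzzle at $c=-2$; the paper instead gives a short self-contained quantitative argument: the derivative bound of Lemma~\ref{l:landing to central derivative} gives $\diam P_{c,n}(-\beta(c)) \lesssim |Df_c(\beta(c))|^{-n}$, hence $|c+\beta(c)|\lesssim \Xi^{-n}$ on $\cP_n(-2)$, and then the transversality $\tau'(-2)\ne 0$ of $\tau(c)=c+\beta(c)$ at its simple zero $c=-2$ converts this to $|c-(-2)|\lesssim\Xi^{-n}$.
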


After defining some sequences of puzzle pieces that are important for the rest of this paper in~\S\ref{ss:landing to central}, we study the para-puzzle pieces containing~$c = -2$ in~\S\ref{ss:para-puzzle nest} and the maximal invariant set of~$f_c^3$ in~$P_{c, 1}(0)$ in~\S\ref{ss:expanding Cantor set}.
The proof of Proposition~\ref{p:ps} is in~\S\ref{ss:para-puzzle nest shrinks}.

\subsection{First landing domains to $P_{c, 1}(0)$}
\label{ss:landing to central}
Fix a parameter~$c$ in~$\cP_0(-2)$.

The following are consequences of the facts recalled in~\S\ref{ss:wake 1/2}.
There are precisely~$2$ puzzle pieces of depth~$0$: $P_{c, 0}(\beta(c))$ and~$P_{c, 0}(-\beta(c))$.
Each of them is bounded by the equipotential~$1$ and by the closures of the external rays landing at~$\alpha(c)$.
Furthermore, the critical value~$c$ of~$f_c$ is contained in~$P_{c, 0}(- \beta(c))$ and the critical point in~$P_{c, 0}(\beta(c))$.
It follows that the set~$f_c^{-1}(P_{c, 0}(\beta(c)))$ is the disjoint union of~$P_{c, 1}(- \beta(c))$ and~$P_{c, 1}(\beta(c))$, so~$f_c$ maps each of the sets~$P_{c, 1}(-\beta(c))$ and~$P_{c, 1}(\beta(c))$ biholomorphically to~$P_{c, 0}(\beta(c))$.
Moreover, there are precisely~$3$ puzzle pieces of depth~$1$: 
$$ P_{c, 1}(-\beta(c)),
P_{c, 1}(0)
\text{ and }
P_{c, 1}(\beta(c)); $$
$P_{c, 1}(- \beta(c))$ is bounded by the equipotential~$1/2$ and by the closures of the external rays that land at~$\alpha(c)$; $P_{c, 1}(\beta(c))$ is bounded by the equipotential~$1/2$ and by the closures of the external rays that land at~$\talpha(c)$; and~$P_{c, 1}(0)$ is bounded by the equipotential~$1/2$ and by the closures of the external rays that land at~$\alpha(c)$ and at~$\talpha(c)$.
In particular, the closure of~$P_{c, 1}(\beta(c))$ is contained in~$P_{c, 0}(\beta(c))$.

Put
$$ \phi_c \= f_c|_{P_{c, 1}(- \beta(c))}^{-1}
\text{ and }
\tphi_c \= f_c|_{P_{c, 1}(\beta(c))}^{-1}. $$
Since the closure of~$\tphi_c(P_{c, 0}(\beta(c))) = P_{c, 1}(\beta(c))$ is contained in~$P_{c, 0}(\beta(c))$, all the iterates of~$\tphi_c$ are defined on~$P_{c, 0}(\beta(c))$ and take images in~$P_{c, 1}(\beta(c))$.
Put $\alpha_0(c) \= \alpha(c)$, $\talpha_0(c) \= \talpha(c)$ and for each integer~$n \ge 1$ put
\[
\talpha_n(c) \= \tphi_c^n(\talpha_0(c)),
\alpha_n(c) \= \phi_c(\talpha_{n - 1}(c)),
\]
$$\tV_{c, n} \= \tphi_c^n(P_{c, 1}(0)),
\text{ and }
V_{c, n} \= \phi_c \circ \tphi_c^{n - 1} (P_{c, 1}(0)). $$
Note that~$f_c^n$ maps each of the sets~$V_{c, n}$ and~$\tV_{c, n}$ biholomorphically to~$P_{c, 1}(0)$; 
so both, $V_{c, n}$ and~$\tV_{c, n}$ are puzzle pieces of depth~$n + 1$.
On the other hand, $f_c^n$ maps each of the sets~$P_{c, n}(- \beta(c))$ and~$P_{c, n}(\beta(c))$ biholomorphically to~$P_{c, 0}(\beta(c))$.
Note moreover that if~$c$ is real, then~$f_c$, $\phi_c$ and~$\tphi_c$ are all real, so~$\alpha(c)$, $\talpha(c)$, $\alpha_n(c)$ and $\talpha_n(c)$ are also real and each of the sets~$V_{c, n}$, $\tV_{c, n}$, $P_{c, n}(- \beta(c))$ and~$P_{c, n}(\beta(c))$ is invariant by complex conjugation and intersects~$\R$.

\begin{lemm}
\label{lem:puzzle pieces}
Let~$c$ be a parameter in~$\cP_0(-2)$.
Then for every integer $n \ge 0$ the only external rays that land at~$\alpha_n(c)$ are~$R_c\left(\frac{3\cdot 2^{n}-1}{3\cdot 2^{n+1}}\right)$ and~$R_c\left(\frac{3\cdot 2^{n}+1}{3\cdot 2^{n+1}}\right)$ and the only external rays that land at~$\talpha_n(c)$ are $R_c\left(\frac{1}{3\cdot 2^{n+1}}\right)$ and $R_c\left(\frac{3\cdot 2^{n+1}-1}{3\cdot 2^{n+1}}\right)$.
Furthermore, for each integer~$n \ge 1$ the following properties hold.
\begin{enumerate}
\item[1.]
The only puzzle pieces of depth~$n + 1$ contained in~$P_{c, n}(- \beta(c))$ are~$P_{c, n + 1}(-\beta(c))$ and~$V_{c, n}$.
Moreover, the closure of~$P_{c, n + 1}(- \beta(c))$ is contained in the open set~$P_{c, n}(- \beta(c))$.
\item[2.]
The puzzle piece~$P_{c,n}(-\beta(c))$ is bounded by the external rays landing at~$\alpha_{n-1}(c)$ and the equipotential~$1/2^{n}$; the puzzle piece~$P_{c,n}(\beta(c))$ is bounded by the closure of the external rays landing at~$\talpha_{n-1}(c)$ and the equipotential~$1/2^{n}$.
\end{enumerate}
\end{lemm}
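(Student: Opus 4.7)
The proof proceeds by simultaneous induction on $n$, treating the external ray statement and the puzzle piece statement together, since each step uses information from both.

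For the base case $n=0$, the identification of the external rays landing at $\alpha_0(c) = \alpha(c)$ as precisely $R_c(1/3)$ and $R_c(2/3)$ is Theorem~\ref{theo:wake 1/2}, while the identification for $\talpha_0(c) = \talpha(c)$ as $R_c(1/6)$ and $R_c(5/6)$ was recorded immediately after Theorem~\ref{theo:wake 1/2} (via Lemma~\ref{lem:0} applied to $\alpha(c)$). A direct check with $n=0$ gives the claimed angles $1/3, 2/3$ and $1/6, 5/6$. The puzzle pieces of depth $1$ in $P_{c,0}(\beta(c))$ are exactly $P_{c,1}(\beta(c))$ and $P_{c,1}(0)$, and $\cl{P_{c,1}(\beta(c))} \subset P_{c,0}(\beta(c))$, from the discussion of puzzle pieces of depth $0$ and $1$ in~\S\ref{ss:landing to central}.

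For the inductive step on the external rays, assume the claim for $n-1$. By the definitions $\alpha_n(c) = \phi_c(\talpha_{n-1}(c))$ and $\talpha_n(c) = \tphi_c(\talpha_{n-1}(c))$, both points are preimages of $\talpha_{n-1}(c)$ under $f_c$, and they are the only preimages (being two distinct points and $f_c$ being degree two). Since $\talpha_{n-1}(c) \neq c$, Lemma~\ref{lem:0} applies: the only rays that can land at either $\alpha_n(c)$ or $\talpha_n(c)$ are the four rays whose angles are halves, modulo $1/2$, of the angles $1/(3\cdot 2^n)$ and $(3\cdot 2^n - 1)/(3\cdot 2^n)$ landing at $\talpha_{n-1}(c)$. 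A short arithmetic computation yields exactly the four candidate angles
$$\tfrac{1}{3\cdot 2^{n+1}}, \quad \tfrac{3\cdot 2^{n+1}-1}{3\cdot 2^{n+1}}, \quad \tfrac{3\cdot 2^{n}-1}{3\cdot 2^{n+1}}, \quad \tfrac{3\cdot 2^{n}+1}{3\cdot 2^{n+1}}.$$
To distribute these between $\alpha_n(c)$ and $\talpha_n(c)$, I use that $\alpha_n(c) \in P_{c,1}(-\beta(c))$ (by construction via $\phi_c$) and $\talpha_n(c) \in P_{c,1}(\beta(c))$ (via $\tphi_c$), together with the fact that these two puzzle pieces are separated by the external rays $R_c(1/3)$ and $R_c(2/3)$. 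The two ``middle'' angles close to $1/2$ (namely $(3\cdot 2^n \pm 1)/(3\cdot 2^{n+1})$) correspond to rays in the closure of $P_{c,1}(-\beta(c))$ and must land at $\alpha_n(c)$; the two ``outer'' angles correspond to rays in the closure of $P_{c,1}(\beta(c))$ and land at $\talpha_n(c)$.

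For the inductive step on the puzzle pieces (for $n\ge 1$), I note that $f_c$ maps $P_{c,n}(-\beta(c))$ biholomorphically onto $P_{c,n-1}(\beta(c))$. An entirely symmetric induction (using $\tphi_c^{n-1}$ sending $P_{c,0}(\beta(c))$ onto $P_{c,n-1}(\beta(c))$ and transporting the two depth-$1$ pieces $P_{c,1}(\beta(c))$ and $P_{c,1}(0)$ of the base case) shows that the only puzzle pieces of depth $n$ in $P_{c,n-1}(\beta(c))$ are $P_{c,n}(\beta(c))$ and $\tV_{c,n-1}$. Pulling back through $\phi_c$ yields the two pieces $P_{c,n+1}(-\beta(c)) = \phi_c(P_{c,n}(\beta(c)))$ and $V_{c,n} = \phi_c(\tV_{c,n-1}) = \phi_c \circ \tphi_c^{n-1}(P_{c,1}(0))$, establishing part~1. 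The boundary description in part~2 follows immediately from the just-proved external ray claim: the rays bounding $P_{c,n}(-\beta(c))$ are in $I_{c,n}$ and must land at $f_c^{-1}$-preimages of the landing point of the rays bounding $P_{c,n-1}(\beta(c))$; by the symmetric induction this preimage inside $P_{c,1}(-\beta(c))$ is exactly $\alpha_{n-1}(c)$, and the only rays landing there are the two indicated ones. The equipotential component is $1/2^n$ because $P_{c,n}(-\beta(c)) \subset f_c^{-n}(X_c) = \{G_c < 1/2^n\}$ and must touch this level set (since $P_{c,n}(-\beta(c))$ contains $-\beta(c)$ and extends outward). Finally, $\cl{P_{c,n+1}(-\beta(c))} \subset \{G_c \le 1/2^{n+1}\}$ lies strictly inside $\{G_c < 1/2^n\}$, and its boundary rays land at $\alpha_n(c) \neq \alpha_{n-1}(c)$, so its closure meets neither the ray nor the equipotential boundary of $P_{c,n}(-\beta(c))$.

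The principal obstacle is the bookkeeping in the inductive ray step: Lemma~\ref{lem:0} only matches rays to preimage points up to a two-way ambiguity, and one must use the global topology of the puzzle (specifically the separation of $P_{c,1}(\pm\beta(c))$ by the rays $R_c(1/3), R_c(2/3)$) to assign the four candidate angles unambiguously to $\alpha_n(c)$ and $\talpha_n(c)$. Everything else is a fairly mechanical unwinding of the definitions combined with the fact that $f_c$ restricts to a biholomorphism on each of $P_{c,1}(\pm\beta(c))$.
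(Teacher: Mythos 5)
Your proposal is correct and follows essentially the same line as the paper's proof: induction on~$n$ using Lemma~\ref{lem:0} to produce the four candidate half-angles, then the position of~$\talpha_{n}(c)$ in~$P_{c,1}(\beta(c))$ (the paper uses this alone, letting Lemma~\ref{lem:0} force the other two rays to~$\alpha_n(c)$; you use both containments, which is fine) to assign them; the puzzle-piece claims then come from the biholomorphism carrying depth~$1$ pieces in~$P_{c,0}(\beta(c))$ to depth~$n+1$ pieces in~$P_{c,n}(-\beta(c))$. The one place your argument is noticeably less tight is the closure claim in part~1: the paper simply transports the known fact~$\cl{P_{c,1}(\beta(c))}\subset P_{c,0}(\beta(c))$ through the biholomorphism~$f_c^n:P_{c,n}(-\beta(c))\to P_{c,0}(\beta(c))$, whereas you argue directly from the boundary description ``rays land at distinct points, equipotentials are nested''; as stated this leaves unaddressed whether the arc of the equipotential~$1/2^{n+1}$ bounding~$P_{c,n+1}(-\beta(c))$ could meet the rays bounding~$P_{c,n}(-\beta(c))$, which requires the extra observation that the angles of the inner rays are strictly closer to~$1/2$ — true, but worth saying, and avoidable by the paper's pullback argument.
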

\proof
For an integer~$n \ge 0$ put $\theta_n \= \frac{1}{3\cdot 2^{n+1}}$ and~$\theta_n' \= 1 - \theta_n$.

The proof of the first assertion is by induction.
When~$n = 0$ the assertion is shown in~\S\ref{ss:wake 1/2}.
Given an integer~$n \ge 0$ assume that the only external rays that land at~$\talpha_n(c)$ are those of angles~$\theta_n$ and~$\theta_n'$.
Since~$f_c^{-1}(\talpha_n(c)) = \{ \alpha_{n+1}(c), \talpha_{n+1}(c) \}$, by Lemma~\ref{lem:0} the only external rays landing at~$\alpha_{n + 1}(c)$ or~$\talpha_{n + 1}(c)$ are those of angles $\frac{\theta_n}{2}$, $\frac{\theta_n + 1}{2}$, $\frac{\theta_n'}{2}$ and~$\frac{\theta_n' + 1}{2}$.
Since
$$ \frac{\theta_n}{2}
<
\frac{1}{6}
<
\frac{\theta_n'}{2}
<
\frac{\theta_n + 1}{2}
<
\frac{5}{6}
<
\frac{\theta_n' + 1}{2} $$
and since~$\talpha_{n+1}(c)$ is in~$P_{c,1}(\beta(c))$, the external rays of angles~$\frac{\theta_n}{2}$ and~$\frac{\theta_n' + 1}{2}$
land at $\talpha_{n+1}(c)$.
By Lemma~\ref{lem:0} it follows that the external rays of angles~$\frac{\theta_n'}{2}$ and~$\frac{\theta_n + 1}{2}$ land at~$\alpha_{n+1}(c)$.
This completes the proof of the induction step and of the first assertion of the lemma.

To prove the rest of the assertions, assume~$n \ge 1$.
Since~$f_c^n$ maps~$P_{c, n}(-\beta(c))$ biholomorphically to~$P_{c, 0}(\beta(c))$, $V_{c, n}$ biholomorphically to~$P_{c, 1}(0)$, and~$P_{c, n + 1}(\beta(c))$ biholomorphically to~$P_{c, 1}(\beta(c))$, it follows that the only puzzle pieces of depth~$n + 1$ contained in~$P_{c, n}(-\beta(c))$ are~$V_{c, n}$ and~$P_{c, n + 1}(- \beta(c))$.
On the other hand, since the closure of~$P_{c, 1}(\beta(c))$ is contained in~$P_{c, 0}(\beta(c))$, it follows that the closure of~$P_{c, n + 1}(-\beta(c))$ is contained in~$P_{c, n}(-\beta(c))$.
We have thus proved part~$1$.
When~$n = 1$ part~$2$ follows from the considerations above.
To prove part~$2$ when~$n \ge 2$, recall that~$f_c^{n - 1}$ maps each of the sets~$P_{c, n - 1}(- \beta(c))$ and~$P_{c, n - 1}(\beta(c))$ biholomorphically to~$P_{c, 0}(\beta(c))$.
Since the closure of~$P_{c, 1}(\beta(c))$ is contained in~$P_{c, 0}(\beta(c))$ and since~$\talpha(c)$ is the only point in the boundary of~$P_{c, 1}(\beta(c))$ that is in the Julia set of~$f_c$, it follows that~$\alpha_{n - 1}(c)$ is the only point in the boundary of~$P_{c, n}(-\beta(c))$ that is in the Julia set of~$f_c$ and that~$\talpha_{n - 1}(c)$ is the only point in the boundary of~$P_{c, n}(\beta(c))$ that is in the Julia set of~$f_c$.
This implies part~$2$ and completes the proof of the lemma.
\endproof

\subsection{Para-puzzle pieces containing~$c = - 2$}
\label{ss:para-puzzle nest}
The purpose of this subsection is to prove the following lemma.
\begin{lemm}
\label{lem:auxiliary para-puzzle pieces}
The following properties hold.
\begin{enumerate}
\item[1.]
For every integer~$n \ge 1$, the para-puzzle piece~$\cP_n(-2)$ contains the closure of~$\cP_{n + 1}(-2)$.
\item[2.]
For every integer~$n \ge 0$ and every parameter~$c$ in~$\cP_n(-2)$, the critical value~$c$ of~$f_c$ is in~$P_{c, n}(-\beta(c))$.
\end{enumerate}
\end{lemm}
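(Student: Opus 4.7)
The plan uses the identity $\Phi(c) = \varphi_c(c)$ recalled in~\S\ref{ss:rays and equipotentials}, which for $c \in \C \setminus \cM$ matches the external angle and the equipotential level of the parameter $c$ with those of the critical value $c$ in the dynamical plane of $f_c$. The core step is to identify the bounding external rays of $\cP_n(-2)$ explicitly: for $n \ge 1$, I claim they are $\cR(\theta_n^\pm)$ with $\theta_n^\pm \= \tfrac{1}{2} \pm \tfrac{1}{3 \cdot 2^n}$. Indeed, $2^n \theta_n^\pm = 2^{n-1} \pm 1/3 \equiv \pm 1/3 \pmod 1$, so $\theta_n^\pm \in J_n$; conversely, any $t \in (\theta_n^-, \theta_n^+)$ satisfies $2^n t \pmod 1 \in (-1/3, 1/3)$, ruling out $1/3$ and $2/3$, so no other element of $J_n$ lies in this interval. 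Since $\cR(1/2)$ lands at $-2$ and $1/2 \in (\theta_n^-, \theta_n^+)$, the bounding rays of $\cP_n(-2)$ are precisely $\cR(\theta_n^\pm)$. By Lemma~\ref{lem:puzzle pieces}, the dynamical rays at the same angles $\theta_n^\pm$ land at $\alpha_{n-1}(c)$ and bound the puzzle piece $P_{c, n}(-\beta(c))$.

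\textbf{Part~1.} For $n \ge 1$, the strict inequalities $\theta_n^- < \theta_{n+1}^- < 1/2 < \theta_{n+1}^+ < \theta_n^+$ together with the strict nesting of the bounding equipotentials place every boundary point of $\cP_{n+1}(-2)$ strictly inside the open angular sector $(\theta_n^-, \theta_n^+)$ and strictly below the depth-$n$ equipotential. The two Misiurewicz landing points of $\cR(\theta_{n+1}^\pm)$ satisfy the same conditions, and since $2^n \theta_{n+1}^\pm \equiv \pm 1/6 \pmod 1 \notin \{1/3, 2/3\}$ we have $\theta_{n+1}^\pm \notin J_n$, so these points lie on no ray of $\cI_n$. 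This places $\overline{\cP_{n+1}(-2)}$ inside $\cP_n(-2)$.

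\textbf{Part~2, by induction on $n$.} For $n = 0$, $\cP_0(-2) \subset \cW$; the discussion in~\S\ref{ss:wake 1/2} places the critical value $c$ in the component of $\C \setminus (R_c(1/3) \cup R_c(2/3) \cup \{\alpha(c)\})$ containing $-\beta(c)$, and the equipotential bound then puts $c$ in $P_{c, 0}(-\beta(c))$. For the inductive step, fix $c \in \cP_n(-2)$; by induction $c \in P_{c, n-1}(-\beta(c))$, and the equipotential bound (via $\Phi(c) = \varphi_c(c)$ when $c \notin \cM$, trivially when $c \in \cM$) places $c$ in one of the two depth-$n$ puzzle pieces inside $P_{c, n-1}(-\beta(c))$, which by Lemma~\ref{lem:puzzle pieces}(1) are $P_{c, n}(-\beta(c))$ and $V_{c, n-1}$. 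When $c \notin \cM$, the identity $\Phi(c) = \varphi_c(c)$ puts the external angle of the critical value in $(\theta_n^-, \theta_n^+)$, so $c$ lies in the sector bounded by the rays $R_c(\theta_n^\pm)$ and containing $R_c(1/2)$, which is $P_{c, n}(-\beta(c))$ since $R_c(1/2)$ lands at $-\beta(c)$. When $c \in \cM$, I join $c$ to some $c' \in \cP_n(-2) \setminus \cM$ by a path in the open piece and track the critical value using the holomorphic motion of Lemma~\ref{lem:hm}: a crossing of the critical value through $I_{c, n}$ along this path would either force the parameter onto the bounding equipotential of $\cP_n(-2)$ or, by Theorem~\ref{DHrays}, onto a parameter ray $\cR(t)$ with $2^n t \equiv \pm 1/3 \pmod 1$, both excluded in the open para-puzzle piece.

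\textbf{Main obstacle.} The delicate point is the case $c \in \cM$ in the inductive step: ruling out all crossings of $I_{c, n}$ as the parameter deforms within the open para-puzzle piece. This rests on the explicit identification of the bounding rays in the first step, combined with the Douady--Hubbard correspondence (Theorem~\ref{DHrays}) between Misiurewicz parameters and landing points of parameter rays, which together exclude any coincidence between the critical value and a depth-$n$ ray without also forcing the parameter out of $\cP_n(-2)$.
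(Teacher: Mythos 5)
Your argument is correct in outline, and Part~1 takes essentially the same route as the paper: you re-derive the description of $\cP_n(-2)$ in terms of the bounding rays $\cR(\theta_n^\pm)$ and equipotentials, which is precisely the content of Lemma~\ref{lem:para-puzzle pieces} (proved just before this lemma in the paper; the paper's proof of Part~1 simply cites it). One small slip: the rays $\cR(\theta_{n+1}^+)$ and $\cR(\theta_{n+1}^-)$ land at the \emph{same} Misiurewicz parameter $\halpha_n$, so ``the two Misiurewicz landing points'' should be singular.

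For Part~2 your route is genuinely different. The paper isolates the Douady--Hubbard correspondence (``$c\in\cW$ lies in a depth-$n$ para-puzzle piece iff the critical value $c$ lies in a depth-$n$ puzzle piece of $f_c$'') as a cited fact, observes that $P_{c,n}(-\beta(c))$ moves continuously in $c$ over $\cP_n(-2)$ via the holomorphic motion (Lemma~\ref{lem:hm}), and concludes by connectedness of $\cP_n(-2)$ and the base point $c=-2$ --- no induction, no case split. You instead induct on $n$ and, within the inductive step, split according to whether $c\in\cM$, using the identity $\Phi(c)=\varphi_c(c)$ directly off $\cM$ and a deformation/crossing argument (which is in effect a proof of the correspondence itself) on $\cM$. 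Both approaches work; the paper's is shorter because it treats the correspondence as a black box and replaces your path-crossing argument with an open-and-closed argument. Two minor points in your version that deserve attention: (i) in the inductive step, Lemma~\ref{lem:puzzle pieces}(1) is stated for $n\ge 1$, so it describes the depth-$n$ pieces inside $P_{c,n-1}(-\beta(c))$ only for $n\ge 2$; for $n=1$ the set $V_{c,0}$ is not defined, and one should instead observe directly (from \S\ref{ss:landing to central}) that $P_{c,1}(-\beta(c))$ is the only depth-$1$ piece contained in $P_{c,0}(-\beta(c))$ (the other two depth-$1$ pieces lie in $P_{c,0}(\beta(c))$), so the inductive step is immediate there; (ii) the phrase ``the bounding equipotential of $\cP_n(-2)$'' should really be $\partial\cX_n\subset\cI_n$, since the conclusion you need is only that $c$ lands on $\cI_n$, not specifically on the two boundary components of $\cP_n(-2)$ --- any parameter ray $\cR(t)$ with $t\in J_n$ suffices.
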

The proof of this lemma is given after the following one.

For each integer~$n \ge 0$ put
$$ t_n \= \frac{3 \cdot 2^{n}-1}{3\cdot 2^{n+1}}
\text{ and }
t'_n \=  \frac{3\cdot 2^{n} + 1}{3\cdot 2^{n+1}}. $$
\begin{lemm}
\label{lem:para-puzzle pieces}
Fix an integer~$n \ge 1$.
Then the parameter~$c = -2$ is contained in a para-puzzle piece of depth~$n$ and there is a unique point~$\halpha_{n - 1}$ in the boundary of~$\cP_n(-2)$ that is contained in~$\cM$.
Furthermore, $\halpha_{n - 1}$ is in~$\R$, the only the external rays of~$\cM$ that land at~$\halpha_{n - 1}$ are~$\cR(t_{n - 1})$ and~$\cR(t_{n - 1}')$, and~$\cP_n(-2)$ is invariant under complex conjugation.
In particular, $\cP_n(-2)$ is bounded by the equipotential~$1/2^n$ and the closures of the external rays~$\cR(t_{n - 1})$ and~$\cR(t_{n - 1}')$ of~$\cM$.
\end{lemm}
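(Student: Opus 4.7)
My plan is to prove the lemma by induction on $n \ge 1$, combining Theorem~\ref{DHrays} (Douady--Hubbard) with the dynamical external ray description in Lemma~\ref{lem:puzzle pieces} to transfer information from the dynamical plane to the parameter plane. That $-2$ always lies in some para-puzzle piece of depth $n$ reduces to the observation that the ray $\cR(1/2)$ lands at $-2$ (as recalled after Theorem~\ref{DHrays}) and that $1/2 \notin J_n$, since $2^n \cdot 1/2 = 2^{n-1}$ is a positive integer, hence $\equiv 0 \pmod 1$, never $1/3$ or $2/3$. The conjugation symmetry $\cP_n(-2) = \overline{\cP_n(-2)}$ propagates through the induction from the identities $\overline{\cR(t)} = \cR(1-t)$, $J_n = 1 - J_n$, and $\overline{\cE(v)} = \cE(v)$, together with $-2 \in \R$.

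For the base case $n = 1$, a direct computation gives $J_1 = \{1/3, 2/3\} = \{t_0, t_0'\}$ (if $t \in [1/3, 2/3]$ then $2t \in [2/3, 4/3]$, so $2t \in \{1/3, 2/3\} \pmod 1$ forces $t \in \{1/3, 2/3\}$), and the relevant claims reduce to what is recalled in \S\ref{ss:wake 1/2}: the rays $\cR(1/3), \cR(2/3)$ are the only external rays of $\cM$ landing at $\halpha_0 := -3/4 \in \R$, and $\cP_1(-2)$ is then the region cut out from $\cW$ by these two rays, the point $\halpha_0$, and the equipotential $\cE(1/2)$.

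For the inductive step I will take $\halpha_n$ to be the landing point of $\cR(t_n)$, which exists and is a Misiurewicz parameter by Theorem~\ref{DHrays} since $t_n = (3\cdot 2^n - 1)/(3\cdot 2^{n+1})$ has even denominator. That same theorem identifies the critical value of $f_{\halpha_n}$ with the landing point of the dynamical ray $R_{\halpha_n}(t_n)$, which by Lemma~\ref{lem:puzzle pieces} equals $\alpha_n(\halpha_n)$; so $\halpha_n = \alpha_n(\halpha_n)$. Applying the same argument to $\cR(t_n')$ shows that $\cR(t_n')$ also lands at $\halpha_n$. Conversely, if $\cR(p/q)$ with $q$ even lands at $\halpha_n$, then $R_{\halpha_n}(p/q)$ lands at the critical value $\halpha_n = \alpha_n(\halpha_n)$, and the first assertion of Lemma~\ref{lem:puzzle pieces} forces $p/q \in \{t_n, t_n'\}$. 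Realness of $\halpha_n$ follows from $\overline{\cR(t_n)} = \cR(t_n')$ together with the uniqueness of the landing point.

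The main obstacle will be to establish that $\halpha_n$ actually lies inside $\cP_n(-2)$ (and not in a sibling para-puzzle piece of depth $n$), so that $\cR(t_n)$ and $\cR(t_n')$ genuinely enter $\cP_n(-2)$ and the connected component of $\cP_n(-2) \setminus (\cR(t_n) \cup \cR(t_n') \cup \{\halpha_n\})$ cut off by the equipotential $\cE(2^{-n-1})$ is precisely $\cP_{n+1}(-2)$. For this I plan to use the holomorphic motion $h_{n-1}^c$ of Lemma~\ref{lem:hm} to track the dynamical ray $R_c(t_n)$ landing at $\alpha_n(c)$ as $c$ varies in $\cP_n(-2)$, and to invoke the Douady--Hubbard transversality principle that $\cR(t_n) \cap \cP_n(-2)$ consists of those $c$ for which the critical value of $f_c$ lies on $R_c(t_n)$; this constrains the unique such parameter, $\halpha_n$, to lie in $\cP_n(-2)$. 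Realness of $\halpha_n$ together with the itinerary of the critical orbit of $f_{-2}$ (eventually fixed at $\beta(-2) = 2$, hence not coinciding with $\alpha_n(-2)$) then forces $-2$ and $\halpha_{n-1}$ to lie on opposite sides of $\halpha_n$ on the real diameter of $\cP_n(-2)$, identifying $\cP_{n+1}(-2)$ and closing the induction.
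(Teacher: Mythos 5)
Your first three steps of the inductive construction---applying Theorem~\ref{DHrays} in both directions, invoking Lemma~\ref{lem:puzzle pieces} to identify $\halpha_{n-1}$ with $\alpha_{n-1}(\halpha_{n-1})$ and to pin down the two landing angles, and deducing realness from $\overline{\cR(t_{n-1})}=\cR(t_{n-1}')$ and $\Phi$ being real---match the paper's argument exactly. Where you diverge is in how you pass from this to the identification of $\cP_n(-2)$, and there you have a genuine gap.

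The paper needs no induction at all for that last step. It observes directly that the open interval $(t_{n-1},t_{n-1}')$ is disjoint from $J_n$ (for any $t$ strictly between, $2^n t \pmod 1$ lies in the open arc $(2/3,1/3)$, missing $\{1/3,2/3\}$), while the endpoints $t_{n-1},t_{n-1}'$ do lie in $J_n$, and $1/2\in(t_{n-1},t_{n-1}')$. Combined with the definition of $\cI_n$, this forces the connected component of $\cX_n\setminus\cI_n$ containing $-2$ to be exactly the region bounded by the two rays, $\halpha_{n-1}$, and the equipotential; the whole lemma drops out. Your inductive version instead has to place $\halpha_n$ inside $\cP_n(-2)$ and then decide which of the two pieces of $\cP_n(-2)\setminus(\cR(t_n)\cup\cR(t_n')\cup\{\halpha_n\})$, after cutting by $\cE(2^{-n-1})$, is $\cP_{n+1}(-2)$. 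The transversality statement you invoke concerns parameters lying \emph{on} the ray $\cR(t_n)$ (i.e.\ outside $\cM$); it does not by itself place the landing point $\halpha_n\in\cM$ inside $\cP_n(-2)$, and your closing sentence about the critical itinerary of $f_{-2}$ is not an argument about the real ordering of $-2$, $\halpha_n$, $\halpha_{n-1}$ in the parameter plane---one would need something like kneading monotonicity (as the paper uses later, in Proposition~\ref{p:ps}, for a different purpose) to get that ordering. So the inductive step is not closed. The efficient fix is to drop the induction and use the combinatorial observation about $J_n$ above.
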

\proof
Since~$\cR(1/2)$ is the only external ray of~$\cM$ that lands at~$c = - 2$ and since~$t = 1/2$ is not in~$J_n$, it follows that~$c = -2$ is contained in a para-puzzle of level~$n$.
On the other hand, by Theorem~\ref{DHrays} the external ray~$\cR(t_{n - 1})$ of~$\cM$ lands at a parameter in~$\cM$, denoted by~$\halpha_{n - 1}$, and the external ray~$R_{\halpha_{n - 1}}(t_{n - 1})$ of~$f_{\halpha_{n - 1}}$ lands at the critical value~$\halpha_{n - 1}$ of~$f_{\halpha_{n - 1}}$.
By Lemma~\ref{lem:puzzle pieces} we have that~$\alpha_{n - 1}(\halpha_{n - 1}) = \halpha_{n - 1}$ and that~$R_{\halpha_{n - 1}}(t_{n - 1})$ and~$R_{\halpha_{n - 1}}(t_{n - 1}')$ are the only external rays of~$f_{\halpha_{n - 1}}$ landing at~$\halpha_{n - 1}$.
Using Theorem~\ref{DHrays} again, we conclude that~$\cR(t_{n - 1})$ and~$\cR(t_{n - 1}')$ are the only external rays of~$\cM$ landing at~$\halpha_{n - 1}$.
Since~$\Phi$ is real, we have~$\cR(t_n') = \overline{\cR(t_n)}$ and therefore~$\halpha_{n - 1}$ is in~$\R$.
On the other hand, since the interval~$(t_{n - 1}, t_{n - 1}')$ is disjoint from~$J_n$ and contains~$1/2$, the closures of the external rays~$\cR(t_{n - 1})$ and~$\cR(t_{n - 1}')$ of~$\cM$ and the equipotential~$1/2^n$ of~$\cM$ bound a para-puzzle piece of depth~$n$ that contains~$c = -2$; that is, they bound~$\cP_n(-2)$.
It follows that~$\halpha_{n - 1}$ is the only point in the boundary of~$\cP_n(-2)$ in~$\cM$.
That~$\cP_n(-2)$ is invariant under complex conjugation follows from the fact that~$\halpha_{n - 1}$ and~$\Phi$ are real.
\endproof
\begin{proof}[Proof of Lemma~\ref{lem:auxiliary para-puzzle pieces}]
Part~$1$ follows from the descriptions of~$\cP_n(-2)$ and~$\cP_{n + 1}(-2)$ in terms of external rays and equipotentials given by Lemma~\ref{lem:para-puzzle pieces}.

To prove part~$2$, let~$n \ge 0$ be an integer.
We use the following direct consequence of the definitions of the puzzle and the para-puzzle and of Theorem~\ref{DHrays}: A parameter~$c$ in~$\cW$ is in a para-puzzle piece of depth~$n$ if and only if the critical value~$c$ of~$f_c$ is in a puzzle piece of depth~$n$ of~$f_c$.
Note that for~$c = -2$ the critical value of~$f_{-2}$ is equal to~$- \beta(-2)$ and hence it is in~$P_{-2, n}(- \beta(-2))$.
Since~$P_{c, n}(- \beta(c))$ depends continuously with~$c$ on~$\cP_n(-2)$ (Lemma~\ref{lem:hm}) and since~$\cP_n(-2)$ is connected by definition, it follows that for each~$c$ in~$\cP_n(-2)$ the critical value~$c$ of~$f_c$ is in~$P_{c, n}(- \beta(c))$.
\end{proof}

\subsection{The uniformly expanding Cantor set}
\label{ss:expanding Cantor set}
Let~$c$ be a parameter in~$\cP_3(-2)$.
In this subsection we study the maximal invariant set~$\Lambda_c$ of~$f_c^3$ in~$P_{c, 1}(0)$.

Note first that the critical value~$c$ of~$f_c$ is in~$P_{c, 3}(- \beta(c))$ (part~$2$ of Lemma~\ref{lem:auxiliary para-puzzle pieces}) and hence in~$P_{c, 2}(- \beta(c))$.
Since~$\alpha_1(c)$ is in the boundary of~$P_{c, 2}(- \beta(c))$ and since the only external rays that land at this point are~$R_c(5/12)$ and~$R_c(7/12)$ (Lemma~\ref{lem:puzzle pieces}), by Lemma~\ref{lem:0} the external rays~$R_c(7/24)$ and~$R_c(17/24)$ land at the same point, denoted by~$\gamma(c)$, and these are the only external rays that lands at this point.
Similarly, the external rays~$R_c(5/24)$ and~$R_c(19/24)$ land at the same point, denoted by~$\tgamma(c)$, and these are the only external rays that land at~$\tgamma(c)$, see Figure~\ref{f:gamma}.
Note that if in addition~$c$ is real, then the points~$\alpha(c)$ and~$\alpha_1(c)$ are both real (\S\ref{ss:landing to central}); together with the fact that~$c$ is in~$P_{c, 2}(- \beta(c))$, this implies~$c < \alpha_1(c) < \alpha(c)$ (\emph{cf}., part~$2$ of Lemma~\ref{lem:puzzle pieces}).
It follows that the points~$\gamma(c)$ and~$\tgamma(c)$ are both real and that the set~$P_{c, 3}(0) = f_c^{-1}(P_{c, 2}(- \beta(c)))$ satisfies
$$ P_{c, 3}(0) \cap \R = (\gamma(c), \tgamma(c)). $$
\begin{figure}[htb]
\begin{center}
\includegraphics[width = 3.5in]{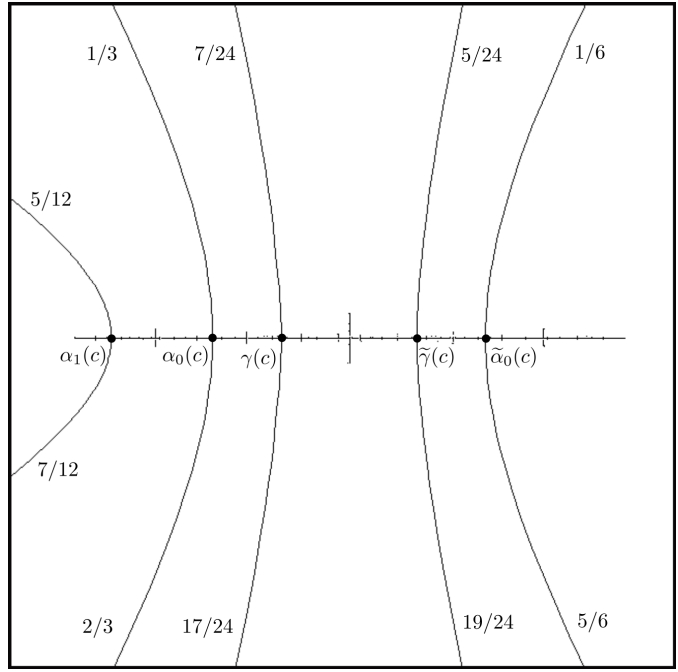}
\end{center}
\caption{Rays landing at~$\alpha_0(c)$, $\talpha_0(c)$, $\alpha_1(c)$, $\gamma(c)$, and~$\tgamma(c)$.}
\label{f:gamma}
\end{figure}

\begin{lemm}
\label{lem:Markov partition}
Let~$c$ be a parameter in~$\cP_3(-2)$.
Then there are precisely~$2$ connected components of~$f_c^{-3}(P_{c, 1}(0))$ contained in~$P_{c, 1}(0)$: One containing~$\gamma(c)$ in its closure, denoted by~$Y_c$, and another one containing~$\tgamma(c)$ in its closure, denoted by~$\tY_c$, see Figure~\ref{f:Y}; the map~$f_c^3$ maps each of the sets~$Y_c$ and~$\tY_c$ biholomorphically to~$P_{c, 1}(0)$.
\begin{figure}[htb]
\begin{center}
\includegraphics[width = 3.5in]{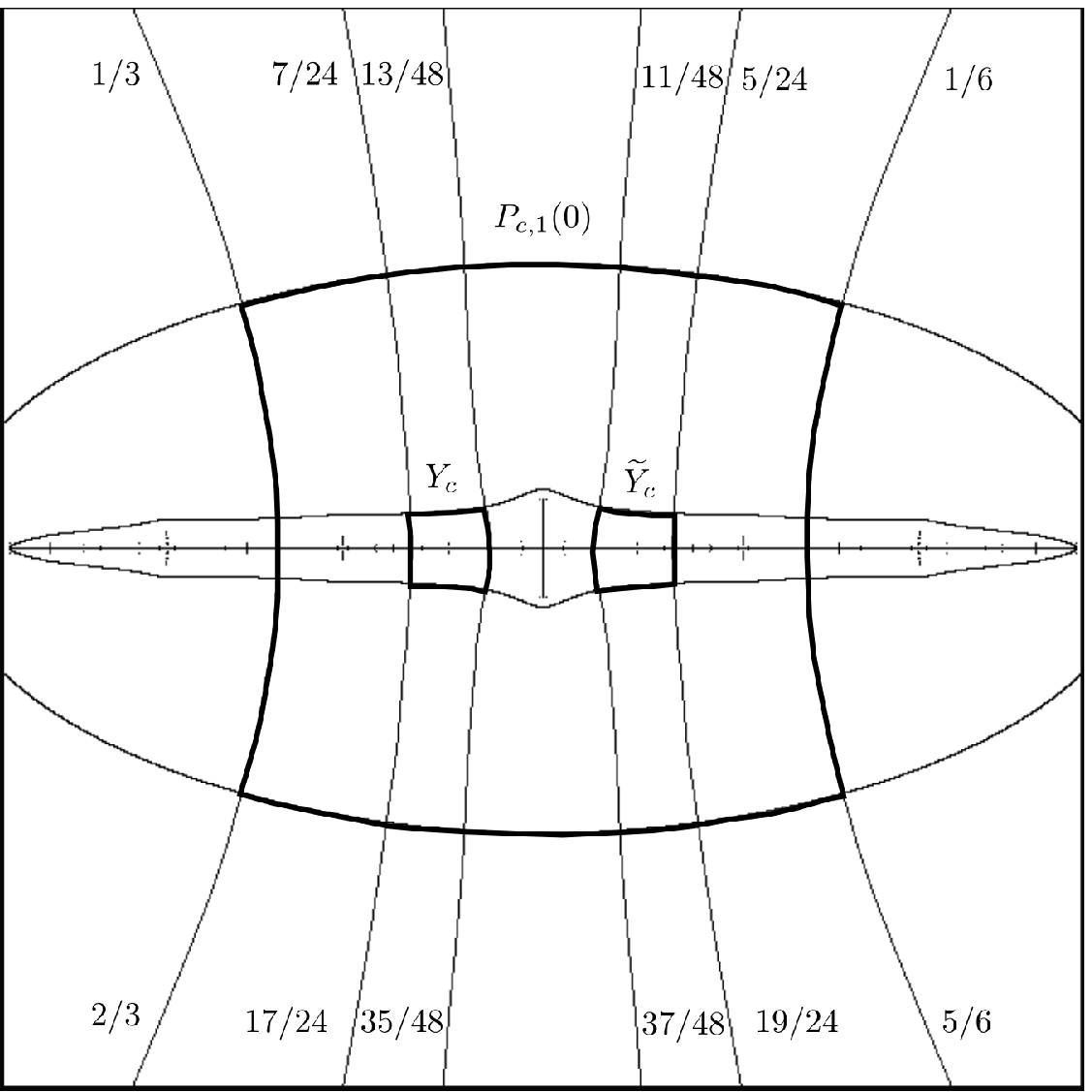}
\end{center}
\caption{The puzzle pieces~$P_{c, 1}(0)$, $Y_c$, and~$\tY_c$.}
\label{f:Y}
\end{figure}
Moreover, the closures of~$Y_c$ and of~$\tY_c$ are disjoint and contained in~$P_{c, 1}(0)$ and the set $Y_c \cup \tY_c$ is contained in~$P_{c, 3}(0)$ and it is disjoint from~$P_{c, 4}(0)$.
Finally, if~$c$ is real, then each of the sets~$Y_c$ and~$\tY_c$ is invariant by complex conjugation and intersects~$\R$.
\end{lemm}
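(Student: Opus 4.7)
The plan is to reduce the lemma to one further pullback of the depth-$3$ piece $V_{c,2}=\phi_c\circ\tphi_c(P_{c,1}(0))$, exploiting that $f_c^{2}|_{V_{c,2}}$ is already a biholomorphism onto $P_{c,1}(0)$. I would define $Y_c$ and $\tY_c$ as the two connected components of $f_c^{-1}(V_{c,2})$; both lie inside $P_{c,1}(0)$ since $V_{c,2}\subset P_{c,0}(-\beta(c))$ and $f_c^{-1}(P_{c,0}(-\beta(c)))=P_{c,1}(0)$. That there are exactly two components is the key input: part~$1$ of Lemma~\ref{lem:puzzle pieces} (with $n=2$) asserts that the only depth-$3$ puzzle pieces contained in $P_{c,2}(-\beta(c))$ are $V_{c,2}$ and $P_{c,3}(-\beta(c))$, and part~$2$ of Lemma~\ref{lem:auxiliary para-puzzle pieces} places the critical value $c$ in the latter, so $c\notin V_{c,2}$ and $f_c$ restricts to a biholomorphism from each component of $f_c^{-1}(V_{c,2})$ onto $V_{c,2}$. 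Composing yields biholomorphisms $f_c^{3}\colon Y_c\to P_{c,1}(0)$ and $f_c^{3}\colon\tY_c\to P_{c,1}(0)$. The outer boundary rays of $V_{c,2}$ land at $\alpha_1(c)$; pulling back, the outer boundary rays of the two components land at the two preimages of $\alpha_1(c)$ in $P_{c,1}(0)$, which are $\gamma(c)$ and $\tgamma(c)$ by the discussion preceding the lemma statement.

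The uniqueness claim is the main technical point. Since $f_c(P_{c,1}(0))=P_{c,0}(-\beta(c))$, any component $W$ of $f_c^{-3}(P_{c,1}(0))$ lying in $P_{c,1}(0)$ maps under $f_c$ to a component of $f_c^{-2}(P_{c,1}(0))$ inside $P_{c,0}(-\beta(c))$, so it suffices to show that $V_{c,2}$ is the \emph{only} such component. Using the decomposition $f_c^{-1}(P_{c,1}(0))=V_{c,1}\cup\tV_{c,1}$ from~\S\ref{ss:landing to central} and pulling back once more: $V_{c,1}\subset P_{c,0}(-\beta(c))$ with $c\notin V_{c,1}$ (by part~$1$ of Lemma~\ref{lem:puzzle pieces} at depth~$2$), so $f_c^{-1}(V_{c,1})$ consists of two disks both sitting in $P_{c,1}(0)\subset P_{c,0}(\beta(c))$, contributing nothing to $P_{c,0}(-\beta(c))$; on the other hand $\tV_{c,1}\subset P_{c,0}(\beta(c))$ and $c\notin\tV_{c,1}$, so $f_c^{-1}(\tV_{c,1})$ splits into two disks and exactly one of them, namely $\phi_c(\tV_{c,1})=V_{c,2}$, lies in $P_{c,0}(-\beta(c))$.

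The remaining geometric assertions follow by routine book-keeping. The closures $\overline{Y_c},\overline{\tY_c}$ lie in $P_{c,1}(0)$ because, again by part~$1$ of Lemma~\ref{lem:puzzle pieces}, $\overline{V_{c,2}}\subset P_{c,1}(-\beta(c))$, whose total $f_c$-preimage is contained in $P_{c,1}(0)$; disjointness of the two closures reduces to $0\notin f_c^{-1}(\overline{V_{c,2}})$, i.e.\ $c\notin\overline{V_{c,2}}$, which holds because $P_{c,3}(-\beta(c))$ is an open neighborhood of $c$ disjoint from $V_{c,2}$. The inclusion $Y_c\cup\tY_c\subset P_{c,3}(0)$ follows from $V_{c,2}\subset P_{c,2}(-\beta(c))$ together with $f_c^{-1}(P_{c,2}(-\beta(c)))\cap P_{c,1}(0)=P_{c,3}(0)$ (single preimage component because $c\in P_{c,2}(-\beta(c))$), while disjointness from $P_{c,4}(0)$ follows from $V_{c,2}\cap P_{c,3}(-\beta(c))=\emptyset$. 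In the real case the conjugation-invariance of $V_{c,2}$ transfers to its preimage; the two components are individually real-symmetric because the real points $\gamma(c),\tgamma(c)$ distinguish them, and each intersects~$\R$ since $V_{c,2}\cap\R$ is a non-empty subinterval of $(\alpha_2(c),\alpha_1(c))$ whose $f_c$-preimage on~$\R$ consists of two symmetric intervals.

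The main obstacle I anticipate is the book-keeping in the uniqueness step: one has to keep straight which depth-$1$ piece $P_{c,1}(\pm\beta(c))$ contains each intermediate preimage, together with the mildly counter-intuitive fact that $P_{c,1}(0)\subset P_{c,0}(\beta(c))$ even though $f_c(P_{c,1}(0))=P_{c,0}(-\beta(c))$.
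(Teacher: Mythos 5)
Your proof is correct and takes essentially the same approach as the paper: both reduce the lemma to the identity $f_c^{-3}(P_{c,1}(0)) \cap P_{c,1}(0) = f_c^{-1}(V_{c,2})$ and derive everything from the classification of puzzle pieces in Lemma~\ref{lem:puzzle pieces} together with the location of the critical value from Lemma~\ref{lem:auxiliary para-puzzle pieces}. The only (cosmetic) difference is that your uniqueness step pulls $P_{c,1}(0)$ back one level at a time and tracks which pieces land in $P_{c,0}(-\beta(c))$, whereas the paper pushes a point $z$ forward and tracks which depth-$1$ piece each iterate falls in; these are dual phrasings of the same combinatorial argument.
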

\begin{proof}
We prove first
\begin{equation}
\label{eq:maximal invariant}
f_c^{-3}(P_{c, 1}(0)) \cap P_{c, 1}(0) = f_c^{-1}(V_{c, 2}).
\end{equation}
First notice that, since~$f_c^2$ maps~$V_{c, 2}$ biholomorphically to~$P_{c, 1}(0)$, the set~$f_c^{-1}(V_{c, 2})$ is contained in~$f_c^{-3}(P_{c, 1}(0))$.
On the other hand, the set~$f_c(P_{c, 1}(0)) = P_{c, 0}(- \beta(c))$ contains~$V_{c, 2}$, so~$f_c^{-1}(V_{c, 2})$ is contained in~$P_{c, 1}(0)$.
This proves that the set in the right hand side of~\eqref{eq:maximal invariant} is contained in the set in the left hand side.
To prove the reverse inclusion, let~$z$ be a point in~$P_{c, 1}(0)$ such that~$f_c^3(z)$ is in~$P_{c, 1}(0)$.
Then~$z$ is in a puzzle piece of depth~$4$ and~$f_c(z)$ is in a puzzle piece of depth~$3$ contained in~$P_{c, 1}(-\beta(c))$.
This implies~$f_c^2(z)$ is in~$P_{c, 0}(\beta(c))$.
On the other hand, $f_c^2(z)$ is in~$f_c^{-1}(P_{c, 1}(0)) = V_{c, 1} \cup \tV_{c, 1}$ and~$V_{c, 1}$ is contained in~$P_{c, 1}(- \beta(c)) \subset P_{c, 0}(- \beta(c))$ (part~$1$ of Lemma~\ref{lem:puzzle pieces}), so we conclude that~$f_c^2(z)$ is in~$\tV_{c, 1}$ and hence that~$f_c(z)$ is in~$f_c^{-1}(\tV_{c, 1}) = V_{c, 2} \cup \tV_{c, 2}$.
Since~$f_c(z)$ is in~$P_{c, 1}(- \beta(c))$ and~$\tV_{c, 2}$ is contained in~$P_{c, 2}(\beta(c)) \subset P_{c, 1}(\beta(c))$ (part~$1$ of Lemma~\ref{lem:puzzle pieces}), we conclude that~$f_c(z)$ is in~$V_{c, 2}$ and hence that~$z$ is in~$f_c^{-1}(V_{c, 2})$.
This completes the proof of~\eqref{eq:maximal invariant}.

To prove the assertions of the lemma, note that by part~$2$ of Lemma~\ref{lem:auxiliary para-puzzle pieces} the critical value~$c$ of~$f_c$ is in~$P_{c, 3}(- \beta(c))$, so it is not in the closure of~$V_{c, 2}$.
This implies that~$f_c^{-1}(V_{c, 2})$ has~$2$ connected components whose closures are disjoint.
On the other hand, $V_{c, 2}$ contains~$\alpha_1(c)$ in its closure (\emph{cf}., parts~$1$ and~$2$ of Lemma~\ref{lem:puzzle pieces}), so one of the connected components of~$f_c^{-1}(V_{c, 2})$ contains~$\gamma(c)$ in its closure and the other one contains~$\tgamma(c)$ in its closure; denote them by~$Y_c$ and~$\tY_c$, respectively.
It follows that~$f_c^3$ maps each of the sets~$Y_c$ and~$\tY_c$ biholomorphically to~$P_{c, 1}(0)$.
From the fact that~$V_{c, 2}$ is contained in~$P_{c, 2}(-\beta(c))$ and that the closure of this last set is contained in~$P_{c, 1}(- \beta(c))$ (part~$1$ of Lemma~\ref{lem:puzzle pieces} with~$n = 1$ and~$n = 2$), it follows that closures of~$Y_c$ and~$\tY_c$ are both contained in~$P_{c, 2}(0) = f_c^{-1}(P_{c, 1}(- \beta(c)))$.
Note also that~$V_{c, 2}$ is contained in~$P_{c, 2}(- \beta(c))$ and it is disjoint from~$P_{c, 3}(- \beta(c))$ (part~$1$ of Lemma~\ref{lem:puzzle pieces}), so~$Y_c \cup \tY_c$ is contained in~$P_{c, 3}(0)$ and it is disjoint from~$P_{c, 4}(0)$.
To prove the last statement of the lemma, suppose~$c$ is real.
Then~$f_c$ and~$\alpha_1(c)$ are real and~$V_{c, 2}$ is invariant by complex conjugation (\S\ref{ss:landing to central}).
Since~$c$ is in~$P_{c, 3}(- \beta(c))$ we also have~$c < \alpha_1(c)$.
It follows that each of the sets~$Y_c$ and~$\tY_c$ is invariant by complex conjugation and intersects~$\R$.
This completes the proof of the lemma.
\end{proof}

For a parameter~$c$ in~$\cP_3(-3)$ define
\[
\begin{array}{cccl}
  g_c & : Y_c \cup \tY_c & \to & P_{c,1}(0)\\
      &    z & \mapsto & g_c(z) \= f_c^{3}(z).
 \end{array}
\]
Lemma~\ref{lem:Markov partition} implies that~$g_c$ maps each of the sets~$Y_c$ and~$\tY_c$ biholomorphically to~$P_{c, 1}(0)$ and that
$$ \Lambda_c = \bigcap_{n\in \N} g_c^{-n}(\cl{P_{c,1}(0)}). $$
In particular, $\Lambda_c$ is contained in~$Y_c \cup \tY_c$.
So Lemma~\ref{lem:Markov partition} implies that~$\Lambda_c$ is contained in~$P_{c, 3}(0)$ and that it is disjoint from~$P_{c, 4}(0)$.
Moreover, Lemma~\ref{lem:Markov partition} also implies that~$g_c$ is a Markov map, so~$\Lambda_c$ is a Cantor set and $g_c$ is uniformly expanding on~$\Lambda_c$, see for instance~\cite{dFadMe08}.
In particular, $g_c$ has a unique fixed point in~$Y_c$ and a unique fixed point in~$\tY_c$.
Finally, note that if~$c$ is real, then~$g_c$ is real and~$\Lambda_c$ is contained in~$\R$.

\subsection{Proof of Proposition~\ref{p:ps}}
\label{ss:para-puzzle nest shrinks}
\begin{lemm}
\label{l:landing to central derivative}
There is a constant~$\Delta_1 > 1$ such that for each parameter~$c$ in~$\cP_2(-2)$ the following properties hold for each integer~$k \ge 2$: We have
$$ \Delta_1^{-1} |Df_c(\beta(c))|^{-k}
\le
\diam(P_{c, k}(- \beta(c)))
\le
\Delta_1 |Df_c(\beta(c))|^{-k} $$
and for each point~$y$ in~$P_{c, k}(- \beta(c))$ or in~$P_{c, k}(\beta(c))$ we have
$$ \Delta_1^{-1} |Df_c(\beta(c))|^k \le |Df_c^k(y)|
\le
\Delta_1 |Df_c(\beta(c))|^k. $$
\end{lemm}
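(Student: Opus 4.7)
The plan is to deduce both statements from the Koebe Distortion Theorem applied to the inverse branches $\phi_c$ and $\tphi_c$ of $f_c$ introduced in \S\ref{ss:landing to central}, together with a uniform modulus bound supplied by the holomorphic motion of Lemma~\ref{lem:hm}.

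The preparatory step is to exhibit a positive universal constant $A_0$ bounding below the modulus of the annulus
$$ A_c \= P_{c,0}(\beta(c)) \setminus \overline{P_{c,1}(\beta(c))} $$
for every $c$ in $\cP_2(-2)$. Part~$1$ of Lemma~\ref{lem:puzzle pieces} (with $n = 1$) guarantees that $A_c$ is non-degenerate, and part~$1$ of Lemma~\ref{lem:auxiliary para-puzzle pieces} shows $\overline{\cP_2(-2)} \subset \cP_1(-2) \subset \cP_0(-2)$. The holomorphic motion $h_0$ of Lemma~\ref{lem:hm} at $c_0 = -2$ and depth $n = 0$, extended via Slodkowski's $\lambda$-lemma, produces a continuous quasi-conformal family identifying $A_c$ with $A_{-2}$, whose dilatation is controlled by compactness, and the bound follows. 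The same compactness argument shows that $|Df_c(\beta(c))|$, $\diam(P_{c,1}(\beta(c)))$, and $\inf\{|w| : w \in P_{c,1}(\pm\beta(c))\}$ are bounded above and away from zero by positive universal constants.

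Fix now $c \in \cP_2(-2)$ and $k \ge 2$. The iterate $\tphi_c^{k-1}$ is univalent on $P_{c,0}(\beta(c))$ and fixes $\beta(c)$, so the chain rule gives $|D\tphi_c^{k-1}(\beta(c))| = |Df_c(\beta(c))|^{-(k-1)}$. Applying the Koebe Distortion Theorem with $A = A_0$ and $K = \overline{P_{c,1}(\beta(c))}$ yields a universal constant $\Delta > 1$ bounding the distortion of $\tphi_c^{k-1}$ on $P_{c,1}(\beta(c))$. Since $\tphi_c^{k-1}(P_{c,1}(\beta(c))) = P_{c,k}(\beta(c))$, taking reciprocals gives
$$ \Delta^{-1}|Df_c(\beta(c))|^{k-1} \le |Df_c^{k-1}(y)| \le \Delta |Df_c(\beta(c))|^{k-1} $$
for every $y \in P_{c,k}(\beta(c))$.

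The rest is bookkeeping. For $y \in P_{c,k}(\pm\beta(c))$, one has $f_c(\pm\beta(c)) = \beta(c)$, hence $f_c(y) \in P_{c,k-1}(\beta(c))$, so $|Df_c^k(y)| = 2|y| \cdot |Df_c^{k-1}(f_c(y))|$; the first factor is comparable to $|Df_c(\beta(c))|$ by the preparatory bounds, and the second to $|Df_c(\beta(c))|^{k-1}$ by the previous display applied with $k - 1$ in place of $k$. For the diameter of $P_{c,k}(-\beta(c))$, transport through $\phi_c$ onto $P_{c,k-1}(\beta(c))$; a second Koebe application to $\phi_c$ on $P_{c,1}(\beta(c)) \supset P_{c,k-1}(\beta(c))$, where $|D\phi_c(\beta(c))| = |Df_c(\beta(c))|^{-1}$, gives $\diam(P_{c,k}(-\beta(c))) \asymp |Df_c(\beta(c))|^{-1} \diam(P_{c,k-1}(\beta(c)))$, and $\diam(P_{c,k-1}(\beta(c)))$ is in turn controlled by Koebe applied to $\tphi_c^{k-2}$ on $P_{c,1}(\beta(c))$ when $k \ge 3$, or directly when $k = 2$. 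All factors depending only on $c \in \overline{\cP_2(-2)}$ are absorbed into $\Delta_1$. The main obstacle is the uniform modulus bound of the preparatory step; once it is secured, the rest reduces to the chain rule and Koebe's theorem.
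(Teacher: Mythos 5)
Your strategy coincides with the paper's: uniform constants (including the modulus bound for the annulus $P_{c,0}(\beta(c))\setminus\overline{P_{c,1}(\beta(c))}$) via compactness of $\overline{\cP_2(-2)}\subset\cP_0(-2)$ and continuous dependence of the puzzle pieces through Lemma~\ref{lem:hm}, then Koebe distortion, then the chain rule. The paper applies Koebe to $f_c^{k-1}$ restricted to $P_{c,k-1}(\pm\beta(c))$ rather than to the pull-backs $\tphi_c^{k-1}$ and $\phi_c\circ\tphi_c^{k-2}$, but these are inverses of each other, so the difference is cosmetic. (Invoking Slodkowski is heavier than needed; the paper just uses continuity of the modulus together with compactness, which is already implied by the holomorphic motion of Lemma~\ref{lem:hm} and the~$\lambda$-lemma.)

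There is one index slip in the bookkeeping for $y\in P_{c,k}(-\beta(c))$. You write $|Df_c^k(y)|=2|y|\cdot|Df_c^{k-1}(f_c(y))|$ with $f_c(y)\in P_{c,k-1}(\beta(c))$ and claim the second factor is comparable to $|Df_c(\beta(c))|^{k-1}$ ``by the previous display applied with $k-1$ in place of $k$.'' But that substitution bounds $|Df_c^{k-2}(y')|$, not $|Df_c^{k-1}(y')|$, for $y'\in P_{c,k-1}(\beta(c))$. To close the gap, insert one more chain-rule factor: $|Df_c^{k-1}(f_c(y))|=|Df_c^{k-2}(f_c(y))|\cdot|Df_c(f_c^{k-1}(y))|$, where $f_c^{k-1}(y)\in P_{c,1}(\beta(c))$ and the last factor is controlled by your preparatory bounds. (Or, as in the paper, apply Koebe directly to $f_c^{k-1}$ on $P_{c,k-1}(-\beta(c))$ and use the base-point computation $|Df_c^{k-1}(-\beta(c))|=|Df_c(\beta(c))|^{k-1}$.) Once this is patched the argument is complete.
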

\begin{proof}
Since~$P_{c, 1}(\beta(c))$ depends continuously with~$c$ on~$\cP_0(-2)$ (\emph{cf}., Lemma~\ref{lem:hm}) and since~$\cP_0(-2)$ contains the closure of~$\cP_2(-2)$ (part~$1$ of Lemma~\ref{lem:auxiliary para-puzzle pieces}), we have
\begin{align*}
\Xi_1
& \=
\sup_{c \in \cP_2(-2)} \sup_{z \in P_{c, 1}(\beta(c))} |Df_c(z)|
< 
+ \infty,
\\
\Xi_2
& \=
\inf_{c \in \cP_2(-2)} \inf_{z \in P_{c, 1}(\beta(c))} |Df_c(z)|
>
0,
\\
\Xi_3
& \=
\sup_{c \in \cP_2(-2)} \diam(P_{c, 1}(\beta(c)))
<
+ \infty,
\end{align*}
and
$$ \Xi_4
\=
\inf_{c \in \cP_2(-2)} \diam(P_{c, 1}(\beta(c)))
>
0. $$
On the other hand, since for each~$c$ in~$\cP_0(-2)$ the set~ $P_{c,
0}(\beta(c))$ 
contains the closure of~ $P_{c, 1}(\beta(c))$ (\emph{cf}., \S\ref{ss:landing to central}), we have
$$ \Xi_5
\=
\inf_{c \in \cP_2(-2)} \modulus (P_{c, 0}(\beta(c)) \setminus \cl{P_{c, 1}(\beta(c))})
>
0. $$
Let~$\Delta > 1$ be the constant given by Koebe Distortion Theorem with~$A = \Xi_5$.

Let~$c$ be a parameter in~$\cP_2(-2)$ and let~$k \ge 2$ be an integer.
Since~$f_c^{k - 1}$ maps each of the sets~$P_{c, k - 1}(\beta(c))$ and~$P_{c, k - 1}(- \beta(c))$ biholomorphically to~$P_{c, 0}(\beta(c))$, the distortion of~$f_c^{k - 1}$ on~$P_{c, k}(\beta(c))$ is bounded by~$\Delta$.
So for each~$y$ in~$P_{c, k}(- \beta(c))$ or in~$P_{c, k}(\beta(c))$ we have
$$ \Delta^{-1} |Df_c(\beta(c))|^{k - 1}
\le
|Df_c^{k - 1}(y)|
\le
\Delta |Df_c(\beta(c))|^{k - 1}. $$
This implies the first assertion of the lemma with $\Delta_1 = \Delta \max \{ \Xi_1 \Xi_3, \Xi_2^{-1} \Xi_4^{-1} \}$ and second with~$\Delta_1 = \Delta \Xi_1 \Xi_2^{-1}$.
\end{proof}

\begin{proof}[Proof of Proposition~\ref{p:ps}]
By the monotonicity of the kneading invariant, the set~$\cK_n$ is contained in~$(-2, \halpha_{n - 1})$, see~\cite[Theorem~$13.1$]{MilThu88}.
Combined with Lemma~\ref{lem:para-puzzle pieces} this implies that~$\cK_n$ is contained in~$\cP_n(-2)$.
Since~$\halpha_1 = - 3/4$, we also have~$\cK_n \subset (-2, -3/4)$.
To prove that~$\cK_n$ is compact, just observe that from the definitions we have
$$ \cK_n = \{ c \in [-2, \halpha_{n - 1}] \mid f_c^n(c) \in \Lambda_c \}. $$
For a given~$\underline{x}$ in~$\{0, 1 \}^{\N_0}$ the existence and uniqueness of~$c$ in~$\cK_n$ such that~$\iota(c) = \underline{x}$ is a direct consequence of general results of Milnor and Thurston and of Yoccoz, see for example~\cite{MilThu88,dMevSt93} and~\cite{Hub93}.

To prove the last statement of the proposition we show that~$\diam(\cP_n(-2)) \to 0$ as~$n \to + \infty$.
To do this, let~$\Delta_1 > 1$ be given by Lemma~\ref{l:landing to central derivative}, put
$$ \Xi \= \inf_{c \in \cP_2(-2)} |Df_c(\beta(c))| > 1, $$
and let~$\tau : \cP_0(-2) \to \C$ be the holomorphic function defined by~$\tau(c) \= c + \beta(c)$.
A direct computation shows that~$c = -2$ is the only zero of~$\tau$ and that~$\tau'(-2) \neq 0$.
Since the closure of~$\cP_2(-2)$ is contained in~$\cP_0(-2)$ (part~$1$ of Lemma~\ref{lem:auxiliary para-puzzle pieces}), there is a constant $C > 0$  such that for every~$c$ in~$\cP_2(-2)$ we have
\begin{equation}
\label{ine1}
|c-(-2)|
\le
C \frac{|\tau(c)|}{|\tau'(-2)|}.
\end{equation}
Let~$n \ge 2$ be an integer and~$c$ a parameter in~$\cP_n(-2)$.
By part~$2$ of Lemma~\ref{lem:auxiliary para-puzzle pieces} we have $c \in P_{c,n}(-\beta(c))$.
So by Lemma~\ref{l:landing to central derivative} with~$k = n$ and the definition of~$\Xi$ we have,
\[
| \tau(c) |
=
|c-(-\beta(c))|
\le
\Delta_1  |Df_c(\beta(c))|^{-n}
\le
\Delta_1 \Xi^{-n}.
\]
Combining this inequality with~\eqref{ine1}, we conclude that  
$\diam \cP_n(-2)\to 0$ as $n \to +\infty$.
This completes the proof of the proposition.
\end{proof}

\section{Reduced statement}
\label{s:reduced statement}
The purpose of this section is to state a sufficient criterion for a quadratic map corresponding to a parameter in~$\bigcup_{n = 3}^{+ \infty} \cK_n$ to have a low-temperature phase transition (Proposition~\ref{p:first-order phase transition}).
The rest of this section is devoted to prove the Main Theorem using this criterion.
The proof of Proposition~\ref{p:first-order phase transition} occupies~\S\S\ref{s:landing derivatives},~\ref{s:induced map} and~\ref{s:estimating pressure}.

Recall that for a real parameter~$c$,
$$ \chicrit = \liminf_{m \to + \infty} \frac{1}{m} \log |Df_c^m(c)|. $$
\begin{propalph}
\label{p:first-order phase transition}
There is~$n_0 \ge 3$ and a constant~$C_0 > 1$ such that for every integer~$n \ge n_0$ and every parameter~$c$ in~$\cK_n$ the following property holds.
Suppose that for every $t>0$ sufficiently large the sum
$$ \sum_{k =0}^{ + \infty} \exp((n + 3k)t \chicrit / 2) |Df_c^{n + 3k}(c)|^{-t / 2}$$
is less than or equal to~$C_0^{-t}$ and that for some~$t_0 \ge 3$ the sum above with~$t = t_0$ is finite and greater than or equal to~$C_0^{t_0}$.
Then there is~$t_* > t_0$ such that~$f_c|_{I_c}$ (resp.~$f_c|_{J_c}$) has a low-temperature phase transition at~$t = t_*$.
If in addition the sum
$$ \sum_{k = 0}^{+ \infty} k \cdot \exp((n + 3k)t_* \chicrit/2) |Df_c^{n + 3k}(c)|^{-t_*/2} $$
is finite, then~$P_c^{\R}$ (resp.~$P_c^{\C}$) is not differentiable at~$t
= t_*$ and there is a unique equilibrium state of~$f_c|_{I_c}$ (resp.~$f_c|_{J_c}$) for
the potential~$- t_* \log |Df_c|$.
Furthermore, this measure is ergodic, mixing, and its measure-theoretic entropy is strictly positive.
\end{propalph}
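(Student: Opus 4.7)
The plan is to combine the postcritical-series bound with the forthcoming Proposition~\ref{p:improved MS criterion} to pin down $P_c^{\C}$ on a right half-line, and then to read phase-transition behavior off this rigidity. Proposition~\ref{p:improved MS criterion}, as advertised in~\S\ref{ss:organization}, is designed so that a bound of the form $\sum_k \exp((n+3k)t\chicrit/2)|Df_c^{n+3k}(c)|^{-t/2} \le C_0^{-t}$ translates into $P_c^{\C}(t) \le -t\chicrit/2$. Thus the first step is to apply this criterion to obtain $P_c^{\C}(t) \le -t\chicrit/2$ for every sufficiently large $t$; symmetrically, with $C_0$ enlarged if needed, the lower bound $\ge C_0^{t_0}$ at $t=t_0$ translates into $P_c^{\C}(t_0) > -t_0\chicrit/2$.

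The second step is the matching lower bound. Using the sequence of periodic orbits constructed in the critical-line subsection (§\ref{ss:organization} advertises this as~\S\ref{ss:critical line}) whose Lyapunov exponents converge to $\chicrit/2$, the variational principle yields $P_c^{\R}(t) \ge -t\chicrit/2$ for all $t \ge 0$. Since $P_c^{\R}(t) \le P_c^{\C}(t)$, both pressures coincide with the affine map $L(t) := -t\chicrit/2$ on a half-line $[T,+\infty)$. Define
$$ t_* := \inf\{\, t > 0 : P_c^{\C}(t) = L(t) \,\}. $$
Since $P_c^{\C}(0) = \htop(f_c) > 0 = L(0)$, and since $P_c^{\C}(t_0) > L(t_0)$ by Step~1, we have $t_* > t_0$; on the other hand $t_* \le T$. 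By continuity of the pressure (from convexity), $P_c^{\C}(t_*) = L(t_*) = -t_*\chicrit/2 < 0$, confirming that the transition is of low-temperature type. If $P_c^{\C}$ were real analytic at $t_*$, the identity principle applied to $P_c^{\C} - L$ on $[t_*,T]$ would force $P_c^{\C} \equiv L$ in a full neighborhood of $t_*$, contradicting the minimality of $t_*$; the same argument applies to $P_c^{\R}$, since both pressures agree with $L$ on $[t_*,+\infty)$.

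For the conclusions under the extra $k$-weighted summability, note first that by convexity the right derivative of $P_c^{\C}$ at $t_*$ equals the slope of $L$, namely $-\chicrit/2$. Non-differentiability amounts to producing a supporting line of strictly more negative slope from the left. The $k$-weighted sum is the formal $t$-derivative of the postcritical series, and through a differentiated version of the estimate in Proposition~\ref{p:improved MS criterion} its finiteness yields, for $t$ in a left neighborhood of $t_*$, an upper bound on $P_c^{\C}(t)$ whose slope at $t_*$ is strictly less than $-\chicrit/2$, forcing a corner at $t_*$ in both $P_c^{\C}$ and $P_c^{\R}$. An equilibrium state $\mu_*$ at $t_*$ is produced as a weak-$*$ accumulation point of the (finitely many, thanks to the strict summability) measures coming from the induced first-return map $F_c$ of §\ref{ss:organization}, and its ergodicity, mixing, and strictly positive measure-theoretic entropy are extracted from the Markov structure of $F_c|_{\Lambda_c}$ together with the fact that the inducing scheme has finite expected return time under $\mu_*$; uniqueness follows because any competing equilibrium state would have Lyapunov exponent $\chicrit/2$ and would therefore lift to an invariant measure for $F_c$ identified by the thermodynamics of the induced system.

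The main obstacle will be this last step: once the postcritical-series criterion is in hand, pinning down $t_*$ and establishing its non-analyticity is essentially a matter of the identity principle, but separating the one-sided derivatives at $t_*$ and turning the weak-$*$ limit $\mu_*$ into a genuine, unique, mixing equilibrium state with positive entropy requires a delicate analysis of the inducing scheme for $F_c$ and of how orbits realizing the exponent $\chicrit/2$ are distributed among the levels of the partition of its domain.
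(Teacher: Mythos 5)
Your treatment of the first half — obtaining $P_c^{\C}(t) = -t\chicrit/2$ on a right half-line from parts~$1$ and~$2$ of Proposition~\ref{p:improved MS criterion} together with Proposition~\ref{p:critical line}, and then locating $t_*>t_0$ and reading off non-analyticity via convexity and the identity principle — matches the paper's argument in substance, even if you spell out the identity-principle step the paper leaves implicit.

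The second half has a genuine gap, on two counts. First, your non-differentiability argument is backwards. You propose to produce, for $t$ in a left neighborhood of $t_*$, an \emph{upper} bound on $P_c^{\C}(t)$ with slope strictly less than $-\chicrit/2$ at $t_*$. But an upper bound that touches at $t_*$ only bounds the left derivative of $P_c^{\C}$ from \emph{below}, which is perfectly consistent with differentiability; a corner in a convex function at $t_*$ requires a \emph{supporting line} (i.e.\ a lower bound tangent at $t_*$) of slope strictly less than the right derivative $-\chicrit/2$. Moreover, the $k$-weighted summability is not a $t$-derivative of the postcritical series in any sense that would control slopes: part~$3$ of Proposition~\ref{p:improved MS criterion} converts it into a finiteness statement for $\sum_{W\in\fD_c} m_c(W)\exp(m_c(W)t_*\chicrit/2)\sup_W|DF_c|^{-t_*}$, whose role is to ensure the invariant measure for the induced map $F_c$ has finite expected return time (so that the Abramov-type construction yields a finite $f_c$-invariant measure), not to compare slopes. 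In the paper, non-differentiability is \emph{deduced from} the existence of an equilibrium state at $t_*$ (via \cite[Corollary~$1.3$]{InoRiv12}); the supporting line of slope $-\lambda_\mu<-\chicrit/2$ comes from the variational principle applied to that measure, and $\lambda_\mu>\chicrit/2$ because $h_\mu>0$ and $h_\mu-t_*\lambda_\mu=-t_*\chicrit/2<0$.

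Second, your construction of the equilibrium state is too vague to be verified, and in places confused: $F_c$ is the first-return map to $V_c=P_{c,n+1}(0)$, a set disjoint from $\Lambda_c$, so ``the Markov structure of $F_c|_{\Lambda_c}$'' is not a thing, and there are not ``finitely many'' measures in play. The actual chain of reasoning in the paper is: $\sP_c^{\C}(t_*,-t_*\chicrit/2)=0$ gives a $(t_*,-t_*\chicrit/2)$-conformal measure charging the maximal invariant set of $F_c$ \cite{PrzRiv11}; an absolutely continuous $F_c$-invariant probability measure $\rho$ is then produced; the tail bound from part~$3$ of Proposition~\ref{p:improved MS criterion} together with bounded distortion gives $\sum_W m_c(W)\rho(W)<+\infty$, so the pushed-forward measure $\hrho$ is finite; positivity of entropy and the equilibrium property come from the generalized Abramov formula \cite{Zwe05}; exactness (hence ergodicity and mixing) from \cite{You99}; and uniqueness from \cite{Dob1304,Dob12} via Ruelle's inequality. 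Your ``lift to an invariant measure for $F_c$'' argument for uniqueness does not address measures with zero Lyapunov-exponent gap, which is exactly the delicate point those references handle.
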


After making a uniform distortion bound in~\S\ref{ss:uniform distortion bound}, we give the proof of the Main Theorem in~\S\ref{ss:proof of Main Theorem}. 

\subsection{Uniform distortion bound}
\label{ss:uniform distortion bound}
In this subsection we prove a uniform distortion bound, stated as Lemma~\ref{l:distortion to central 0} below.
We start with some preparatory lemmas.
Recall that for a parameter~$c$ in~$\cP_2(-2)$ the external rays~$R_c(7/24)$ and~$R_c(17/24)$ land at the point~$\gamma(c)$ in~$P_{c,1}(0)$, see~\S\ref{ss:expanding Cantor set}.

\begin{lemm}
\label{l:pleasant couple}
For every parameter~$c$ in~$\cP_2(-2)$ the following properties hold.
\begin{enumerate}
\item[1.]
The open disk~$\hU_c$ containing~$-\beta(c)$ that is bounded by the
equipotential~$2$ and by
\begin{equation}
\label{eq:pleasant cut}
R_c(7/24) \cup \{ \gamma(c) \} \cup R_c(17/24),  
\end{equation}
contains the closure of~$P_{c,0}(- \beta(c))$.
\item[2.]
The open set $\hW_c \= f_c^{-1}(\hU_c)$ contains the closure of~$P_{c, 1}(0)$
and it depends continuously with~$c$ on $\cP_{3}(-2)$.
\end{enumerate}
\end{lemm}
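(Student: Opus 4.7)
The plan is to view~$\hU_c$ concretely: the open topological disk~$\{G_c < 2\}$ is cut by the simple arc obtained by intersecting~$\cl{R_c(7/24)} \cup \{\gamma(c)\} \cup \cl{R_c(17/24)}$ with~$\{G_c < 2\}$, which runs from one point on the equipotential~$2$ of~$f_c$ through~$\gamma(c)$ to another point on the equipotential~$2$ and splits~$\{G_c < 2\}$ into two simply connected components; $\hU_c$ is the component containing~$-\beta(c)$. To prove part~$1$, I will verify that the compact connected set~$\cl{P_{c, 0}(-\beta(c))}$ contains the point~$-\beta(c) \in \hU_c$ and is disjoint from~$\partial \hU_c$; the containment~$\cl{P_{c, 0}(-\beta(c))} \subset \hU_c$ then follows from connectedness. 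Disjointness with the equipotential~$2$ arc of~$\partial \hU_c$ is immediate since~$G_c \le 1$ on~$\cl{P_{c, 0}(-\beta(c))}$. For the remaining part of~$\partial \hU_c$ I will use that distinct external rays of~$f_c$ are pairwise disjoint (so the rays at angles~$7/24$ and~$17/24$ miss the rays at~$1/3$ and~$2/3$ that bound~$P_{c, 0}(-\beta(c))$) together with~$\gamma(c) \neq \alpha(c)$, which holds because Theorem~\ref{theo:wake 1/2} forces the only external rays landing at~$\alpha(c)$ to be~$R_c(1/3)$ and~$R_c(2/3)$. The last check is that neither of~$R_c(7/24)$ and~$R_c(17/24)$ meets the equipotential~$1$ arc bounding~$\cl{P_{c, 0}(-\beta(c))}$; but each of these rays meets the equipotential~$1$ at a unique point at its own angle, and both~$7/24$ and~$17/24$ lie outside the angular interval from~$1/3$ to~$2/3$ through~$1/2$ that bounds~$P_{c, 0}(-\beta(c))$.

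For part~$2$, the containment~$\cl{P_{c, 1}(0)} \subset \hW_c$ follows from part~$1$: by continuity of~$f_c$, $f_c(\cl{P_{c, 1}(0)}) \subset \cl{P_{c, 0}(-\beta(c))} \subset \hU_c$, so $\cl{P_{c, 1}(0)} \subset f_c^{-1}(\hU_c) = \hW_c$. For the continuous dependence on~$c \in \cP_3(-2)$, I will invoke the holomorphic motion~$h_2$ of Lemma~\ref{lem:hm} based at~$c_0 = -2$, which by part~$1$ of Lemma~\ref{lem:auxiliary para-puzzle pieces} is defined on~$\cP_2(-2) \supset \cP_3(-2)$. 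Since~$8 \cdot 7/24 \equiv 1/3$ and~$8 \cdot 17/24 \equiv 2/3 \pmod 1$, the rays~$R_c(7/24)$ and~$R_c(17/24)$ and their common landing point~$\gamma(c)$ belong to the graph~$I_{c, 3}$; the motion~$h_2$ therefore moves these objects continuously in~$c$. Combined with the continuity of the equipotential~$2$ via the B{\"o}ttcher coordinate, this yields continuous dependence of~$\hU_c$ on~$c$, and hence of~$\hW_c = f_c^{-1}(\hU_c)$ since~$f_c$ varies holomorphically with~$c$.

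The step I expect to require the most care is confirming that each of the rays~$R_c(7/24)$ and~$R_c(17/24)$ avoids all of~$\cl{P_{c, 0}(-\beta(c))}$, not merely its bounding rays. For this I plan to trace each ray from its landing point~$\gamma(c)$ outward: since~$\gamma(c)$ lies in~$P_{c, 1}(0) \subset P_{c, 0}(\beta(c))$ and the ray is disjoint from the bounding rays~$R_c(1/3)$ and~$R_c(2/3)$, its portion inside~$X_c = \{G_c < 1\}$ is a connected arc contained in the closed puzzle piece~$\cl{P_{c, 0}(\beta(c))}$, which is disjoint from the open piece~$P_{c, 0}(-\beta(c))$; the portion of the ray outside~$X_c$ cannot meet~$\cl{P_{c, 0}(-\beta(c))} \subset \cl{X_c}$ except possibly on the equipotential~$1$, where the angular position again rules out any intersection.
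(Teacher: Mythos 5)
Your argument is correct and follows essentially the same route as the paper's own proof, which for part~$1$ simply records the angular ordering $7/24<1/3<2/3<17/24$ (relying on the description of $\partial P_{c,0}(-\beta(c))$ via Theorem~\ref{theo:wake 1/2}) and for part~$2$ invokes Lemma~\ref{lem:hm}; you have merely made explicit the connectedness/disjointness verification that the paper leaves implicit.
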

\begin{proof}
\

\partn{1}
Since the puzzle piece~$P_{c, 0}(-\beta(c))$ is bounded by the equipotential~$1$ and by~$R_c(1/3) \cup \{ \alpha(c) \} \cup R_c(2/3)$ (Theorem~\ref{theo:wake 1/2} and \S\ref{ss:landing to central}) and since $7/24 < 1/3 < 2/3 < 17/24$, we deduce that~$\hU_c$ contains the closure of~$P_{c,0}(- \beta(c))$ .

\partn{2}
That~$\hW_c$ contains the closure of~$P_{c, 1}(0) = f_c^{-1}(P_{c, 0}(- \beta(c)))$ is a direct consequence of part~$1$.
To show that~$\hW_c$ depends continuously with~$c$ on~$\cP_3(-2)$, it is enough to show that~$\partial \hW_c$ depends continuously with~$c$ on~$\cP_3(-2)$.
This last assertion follows directly from Lemma~\ref{lem:hm}.
\end{proof}

For the following lemma, see Figure~\ref{f:U}.
\begin{figure}[htb]
\begin{center}
\includegraphics[width = 4.5in]{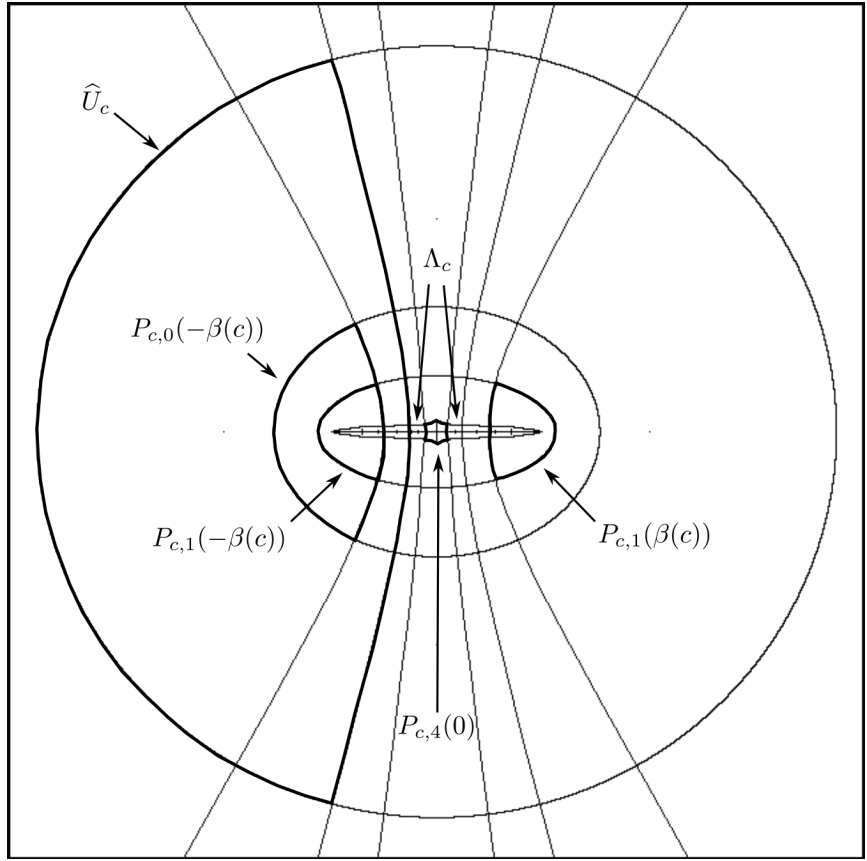}
\end{center}
\caption{The sets~$\hU_c$ and~$\Lambda_c$, and the puzzle pieces~$P_{c, 0}( - \beta(c))$, $P_{c, 1}(-\beta(c))$, $P_{c, 1}(\beta(c))$, and~$P_{c, 4}(0)$.}
\label{f:U}
\end{figure}

\begin{lemm}
\label{l:off postcritical}
Let~$n \ge 3$ be an integer and let~$c$ be a parameter in~$\cK_n$.
Then for every integer~$j \ge 1$ the point~$f_c^j(0)$ is contained in
\begin{equation}
\label{e:post-critical approximation}
P_{c, 1}(- \beta(c)) \cup \Lambda_c \cup P_{c, 1} (\beta(c)).
\end{equation}
Moreover, this set is disjoint from~$\hU_c \setminus P_{c,0}(- \beta(c))$ and from~$P_{c, 4}(0)$.
\end{lemm}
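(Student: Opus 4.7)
The plan is to establish separately the inclusion statement and the two disjointness statements. Writing $i = j - 1 \ge 0$ so that $f_c^j(0) = f_c^i(c)$, the case $i = 0$ is Lemma~\ref{lem:auxiliary para-puzzle pieces}(2), which gives $c \in P_{c,n}(-\beta(c)) \subset P_{c,1}(-\beta(c))$. The case $i = n + 3k$ with $k \ge 0$ follows from Proposition~\ref{p:ps} combined with the $g_c$-invariance of $\Lambda_c$.

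For the remaining values of $i$, I would use the identity $f_c^{i+1}(c) = (f_c^i(c))^2 + c$ together with $\Lambda_c \subset (\alpha(c), \talpha(c))$ from~\S\ref{ss:expanding Cantor set} and the fixed-point relation $\alpha(c)^2 + c = \alpha(c)$. Since $c \in \cK_n \subset (-2, -3/4)$ forces $\alpha(c) < 0$, one has $\talpha(c) - c = \alpha(c)^2 - 2\alpha(c) > \alpha(c)^2$. Whenever $f_c^{i+1}(c) > \alpha(c)$, this yields $(f_c^i(c))^2 > \alpha(c) - c = \alpha(c)^2$ and hence $|f_c^i(c)| > \talpha(c)$; combined with the sign of $f_c^i(c)$ prescribed by the definition of $\cK_n$, the point lands either above $\talpha(c)$ or below $\alpha(c)$, placing it in $P_{c,1}(\beta(c))$ or $P_{c,1}(-\beta(c))$ respectively after the trivial check that $f_c^i(c) \in K_c$ has $G_c(f_c^i(c)) = 0 < 1/2$, so it sits inside the outer bounding equipotential of the depth-$1$ pieces. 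I would apply this in turn to $i = n + 3k + 2$ (with $f_c^{i+1}(c) \in \Lambda_c$), then to $i = n + 3k + 1$ (with $f_c^{i+1}(c) > \talpha(c)$ obtained at the previous step), and finally by downward induction to $i \in \{1, \ldots, n-1\}$ starting from the base case $i = n - 1$ handled as in the first application.

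For the disjointness from $P_{c,4}(0)$, the pieces $P_{c,1}(\pm\beta(c))$ are distinct depth-$1$ pieces from $P_{c,1}(0) \supset P_{c,4}(0)$, and $\Lambda_c \cap P_{c,4}(0) = \emptyset$ is recorded in~\S\ref{ss:expanding Cantor set}. For the disjointness from $\hU_c \setminus P_{c,0}(-\beta(c))$, the containment $P_{c,1}(-\beta(c)) \subset P_{c,0}(-\beta(c))$ handles the first piece trivially. For $P_{c,1}(\beta(c))$ and for $\Lambda_c \subset P_{c,3}(0)$, the cleanest approach is a Jordan-curve separation using $\Gamma \= R_c(7/24) \cup \{\gamma(c)\} \cup R_c(17/24)$, a simple closed curve on $\hat{\C}$ (both rays also meeting at $\infty$). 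Since $\Gamma \cap \R = \{\gamma(c)\}$, the two components of $\hat{\C} \setminus \Gamma$ are distinguished on $\R$: one contains $(-\infty, \gamma(c)) \ni -\beta(c)$, the other contains $(\gamma(c), +\infty) \ni 0, \beta(c)$. By construction $\hU_c$ is connected, disjoint from $\Gamma$, and contains $-\beta(c)$, so it lies in the first component. On the other hand, $\Gamma \subset \partial P_{c,3}(0)$ by the boundary description of $P_{c,3}(0) = f_c^{-1}(P_{c,2}(-\beta(c)))$ recalled in~\S\ref{ss:expanding Cantor set}, so $P_{c,3}(0) \cap \Gamma = \emptyset$; connectedness and $0 \in P_{c,3}(0)$ place $P_{c,3}(0)$ in the second component. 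Similarly $P_{c,1}(\beta(c))$ is disjoint from $\Gamma$ (the angles $7/24$ and $17/24$ lie outside the angular range $(-1/6, 1/6)$ of $P_{c,1}(\beta(c))$, and $\gamma(c) < 0$ is not in $P_{c,1}(\beta(c)) \cap \R \subset (0, +\infty)$) and, being connected with $\beta(c) \in P_{c,1}(\beta(c))$, lies in the second component as well.

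The main subtlety will be the disjointness of $P_{c,1}(\beta(c))$ and $\hU_c$: a naive attempt through $P_{c,1}(\beta(c)) \subset P_{c,0}(\beta(c))$ does not work because $P_{c,0}(\beta(c)) \cap \hU_c \ne \emptyset$---for instance, the real interval $(\alpha(c), \gamma(c))$ lies in both, passing through the pinch point $\alpha(c)$ common to $\partial P_{c,0}(\pm\beta(c))$. The Jordan separation by $\Gamma$ is the natural substitute and simultaneously handles the $\Lambda_c \subset P_{c,3}(0)$ case.
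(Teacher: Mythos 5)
Your proof is correct. For the disjointness from $\hU_c \setminus P_{c,0}(-\beta(c))$ and from $P_{c,4}(0)$ you use the same Jordan separation by $R_c(7/24) \cup \{\gamma(c)\} \cup R_c(17/24)$ that the paper uses, placing $\hU_c$ on the $-\beta(c)$ side and $P_{c,1}(\beta(c))$ and $\Lambda_c \subset P_{c,3}(0)$ on the $\beta(c)$ side; your remark that the naive containment $P_{c,1}(\beta(c)) \subset P_{c,0}(\beta(c))$ is insufficient is a correct diagnosis. For the inclusion of the postcritical orbit into $P_{c,1}(-\beta(c)) \cup \Lambda_c \cup P_{c,1}(\beta(c))$, however, you take a genuinely different route. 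The paper is purely combinatorial: it pushes $c \in P_{c,n}(-\beta(c))$ forward to get $f_c^j(c) \in P_{c,n-j}(\beta(c))$ for $j \in \{1, \ldots, n-1\}$, and uses $\Lambda_c \subset Y_c \cup \tY_c \subset f_c^{-1}(V_{c,2})$ to place $f_c^{n+3k+1}(c) \in V_{c,2} \subset P_{c,1}(-\beta(c))$ and $f_c^{n+3k+2}(c) \in \tV_{c,1}$. (The paper's proof here asserts $\tV_{c,1} \subset P_{c,1}(-\beta(c))$, which is a typo: $\tV_{c,1} = \tphi_c(P_{c,1}(0))$ lies in $P_{c,1}(\beta(c))$, and that is what the statement of the lemma requires.) You instead run a real, one-dimensional recursion: the fixed-point identity $\alpha(c) - c = \alpha(c)^2$ converts $f_c^{i+1}(c) > \alpha(c)$ into the strict inequality $|f_c^i(c)| > \talpha(c)$, after which the sign constraints defining $\cK_n$ select the correct depth-$1$ piece, with strictness ensuring the point avoids the boundary points $\alpha(c)$, $\talpha(c)$. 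Both arguments are valid: the paper's version identifies the precise landing pieces $P_{c,n-j}(\beta(c))$, $V_{c,2}$, $\tV_{c,1}$, which are of the flavor reused in later sections, while yours is more elementary, relying only on $\Lambda_c \subset (\alpha(c), \talpha(c))$ and the sign pattern, and never invoking the puzzle mechanics of \S\ref{ss:landing to central}.
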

\begin{proof}
By part~$2$ of Lemma~\ref{lem:auxiliary para-puzzle pieces}, the critical value~$c$ of~$f_c$ is in~$P_{c,n}(-\beta(c))$.
Thus for each~$j$ in~$\{1, \ldots, n -1 \}$ the point~$f_c^j(c)$ belongs to~$P_{c, n - j}(\beta(c)) \subset P_{c, 1}(\beta(c))$.
Using the hypothesis that~$c$ is in~$\cK_n$, we conclude that for every integer~$k \ge 0$ the point~$f_c^{n + 3k}(c)$ belongs to~$\Lambda_c$,
that~$f_c^{n + 3k + 1}(c)$ belongs to~$V_{c, 2} \subset P_{c, 1}(- \beta(c))$
and that~$f_c^{n + 3k + 2}(c)$ belongs to~$\tV_{c, 1} \subset P_{c, 1}(- \beta(c))$.
This proves the first part of the lemma.

To prove the last assertion of the lemma, note that~$P_{c, 4}(0)$ is disjoint from~$\Lambda_c$ (\S\ref{ss:expanding Cantor set}).
On the other hand, $P_{c, 4}(0)$ is contained in~$P_{c, 1}(0)$ and it is therefore disjoint from~$P_{c, 1}(-\beta(c)) \cup P_{c, 1}(\beta(c))$.
It remains to prove that~\eqref{e:post-critical approximation} is disjoint from~$\hU_c \setminus P_{c,0}(- \beta(c))$.
This last set is disjoint from~$P_{c, 1}(-\beta(c))$.
To complete the proof, observe that the set~\eqref{eq:pleasant cut} separates~$\C$ into~$2$ connected components: One containing~$- \beta(c)$, denoted by~$H$, and another one containing~$\beta(c)$, denoted by~$\tH$.
Clearly~$\hU_c$ is contained in~$H$.
On the other hand, part~$2$ of Lemma~\ref{lem:puzzle pieces} implies that~$P_{c, 1}(\beta(c))$ is contained in~$\tH$.
Finally, note that~$\Lambda_c$ is contained in~$P_{c, 3}(0)$ (\S\ref{ss:expanding Cantor set}) and that this last set is contained in~$\tH$, see the beginning of~\S\ref{ss:expanding Cantor set}.
This shows that~$\Lambda_c$ and~$P_{c, 1}(\beta(c))$ are both disjoint from~$\hU_c$, and hence from~$\hU_c \setminus P_{c, 0}(-\beta(c))$.
This completes the proof of the lemma.
\end{proof}

\begin{lemm}[Uniform distortion bound]
\label{l:distortion to central 0}
There is~$\Delta_2 > 1$ such that for each integer~$n \ge 4$ and each
parameter~$c$ in~$\cK_n$ the following properties hold:
For each integer~$m \ge 1$ and each connected component~$W$ of~$f_c^{-m}(P_{c, 1}(0))$ on which~$f_c^m$ is univalent, $f_c^m$ maps a neighborhood of~$W$ biholomorphically to~$\hW_c$ and the distortion of this map on~$W$ is bounded by~$\Delta_2$.
\end{lemm}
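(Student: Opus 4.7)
The plan is to establish a biholomorphic extension of $f_c^m$ to a neighborhood of $W$ via a pullback argument, and then to apply the Koebe Distortion Theorem with a uniform modulus bound.

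First I would verify the key geometric fact that the annular region $\hW_c\setminus\cl{P_{c,1}(0)}$ is disjoint from the forward orbit $\{f_c^j(0):j\ge 0\}$: a point $z$ lies in $\hW_c\setminus\cl{P_{c,1}(0)}$ if and only if $f_c(z)\in\hU_c\setminus\cl{P_{c,0}(-\beta(c))}$, and by Lemma~\ref{l:off postcritical} this last set meets no $f_c^j(0)$ with $j\ge 1$; together with $0\in P_{c,1}(0)$ this yields the claim.

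Next, given $W$ as in the statement, I would let $\tW$ denote the connected component of $f_c^{-m}(\hW_c)$ containing $W$ and show that $f_c^m\colon\tW\to\hW_c$ is a biholomorphism. The critical values of $f_c^m$ are the points $f_c^j(0)$ with $1\le j\le m$, and by the previous step any such critical value that lies in $\hW_c$ actually lies in $\cl{P_{c,1}(0)}$. Consequently $f_c^m$ restricted to the preimage inside $\tW$ of the annulus $\hW_c\setminus\cl{P_{c,1}(0)}$ is an unbranched covering of that annulus. Let $A_W\subset\tW$ be the connected component of this preimage abutting $W$. Since $f_c^m$ is univalent on $W$, its degree on a loop in $W$ close to $\partial W$ is one, so the unbranched cover $f_c^m\colon A_W\to\hW_c\setminus\cl{P_{c,1}(0)}$ has degree one. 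Gluing along $\partial W$, the open set $W\cup\partial W\cup A_W$ maps biholomorphically onto $\hW_c$, and a connectedness argument (any path in $\tW$ starting in $W$ is trapped in this set) identifies this open set with $\tW$.

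Finally I would apply the Koebe Distortion Theorem with $K=\cl{P_{c,1}(0)}\subset\hW_c$ to the biholomorphism $f_c^m\colon\tW\to\hW_c$; since $(f_c^m)^{-1}(K)\cap\tW=\cl{W}$, the resulting bound is precisely on the distortion of $f_c^m$ on $W$. The required uniformity reduces to a uniform lower bound on $\modulus(\hW_c\setminus\cl{P_{c,1}(0)})$. Both $\hW_c$ (by part~$2$ of Lemma~\ref{l:pleasant couple}) and $P_{c,1}(0)$ (by Lemma~\ref{lem:hm}) depend continuously on $c\in\cP_3(-2)$, and by part~$1$ of Lemma~\ref{lem:auxiliary para-puzzle pieces}, for $n\ge 4$ we have $\cK_n\subset\cP_n(-2)\subset\cl{\cP_4(-2)}\subset\cP_3(-2)$ with $\cl{\cP_4(-2)}$ compact. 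Hence the modulus is bounded below by some $A>0$ uniformly over all such $c$, and the Koebe constant associated to $A$ serves as $\Delta_2$. The main obstacle I anticipate is the middle step, in particular the degree-one identification of the unbranched cover $A_W\to\hW_c\setminus\cl{P_{c,1}(0)}$ and the exhaustion of $\tW$ by $W\cup\partial W\cup A_W$; both rest delicately on the post-critical freeness of the annulus $\hW_c\setminus\cl{P_{c,1}(0)}$ established in the first step.
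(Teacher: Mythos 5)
Your proof is correct and takes essentially the same route as the paper: establish a uniform lower bound $A>0$ on $\modulus(\hW_c\setminus\cl{P_{c,1}(0)})$ over $c\in\cP_4(-2)\supset\cK_n$, use Lemma~\ref{l:off postcritical} to keep the postcritical orbit out of that annulus, and apply the Koebe Distortion Theorem with constant $A$. The paper's proof is deliberately terse and simply asserts that the biholomorphic extension follows from Lemma~\ref{l:off postcritical}; you have unfolded that step into an explicit pullback/degree-one argument, which is fine. One small simplification you could make in that middle step: the critical values of $f_c^m$ lying in $\hW_c$ actually lie in the \emph{open} piece $P_{c,1}(0)$, not merely in its closure. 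Indeed, if $f_c^j(0)\in\hW_c$ with $j\ge 1$, then $f_c^{j+1}(0)\in\hU_c$, and Lemma~\ref{l:off postcritical} forces $f_c^{j+1}(0)\in P_{c,0}(-\beta(c))$, hence $f_c^j(0)\in f_c^{-1}(P_{c,0}(-\beta(c)))=P_{c,1}(0)$. With this observation the boundary $\partial W$ is free of critical points of $f_c^m$, so $W\cup\partial W\cup A_W$ is manifestly open and the gluing you perform is immediate; alternatively, one can bypass the gluing entirely by noting that the inverse branch $(f_c^m|_W)^{-1}$ is already single-valued on $P_{c,1}(0)$, all monodromy of its continuation in $\hW_c$ is generated by loops around critical values contained in $P_{c,1}(0)$, and hence the continuation is single-valued on the simply connected domain $\hW_c$.
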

\begin{proof}
Recall that for each parameter~$c$ in~$\cP_3(-2)$ the set~$\hW_c$ contains the
closure of~$P_{c, 1}(0)$ and that these sets depend continuously with~$c$
on~$\cP_3(-2)$ (\emph{cf}., part~$2$ of Lemma~\ref{l:pleasant couple} and
Lemma~\ref{lem:hm}).
As the closure of~$\cP_4(-2)$ is contained in~$\cP_3(-2)$
(part~$1$ of Lemma~\ref{lem:auxiliary para-puzzle pieces}), we have
$$ A
\=
\inf_{c \in \cP_4(-2)} \modulus (\hW_c \setminus \cl{P_{c, 1}(0)})
>
0. $$
Then the desired assertion follows from Lemma~\ref{l:off postcritical}
and  Koebe Distortion Theorem for this choice
of the constant~$A$.
\end{proof}

\subsection{Proof of Main Theorem assuming Proposition~\ref{p:first-order phase transition}}
\label{ss:proof of Main Theorem}
The following elementary lemma describes the itinerary of the postcritical orbit, for the parameter~$c$ for which we show there is a low-temperature phase transition.
\begin{lemm}
\label{l:phased itinerary}
Let~$N \ge 1$ and~$\ell_0 \ge 1$ be given integers satisfying~$2 \ell_0 \ge N$.
Define~$(a_k)_{k = 0}^{+ \infty}$ as the sequence in~$\{0, 1\}^{\N_0}$ such that for~$k$ in~$\N_0$ we have~$a_k = 0$ if and only if there is an integer~$\ell \ge \ell_0$ such that
$$ \ell^2 \le k \le \ell^2 + N - 1. $$
Moreover, let~$N : \N \to \N_0$ be the function defined for~$k \ge 1$ by
$$ N(k) \= \# \{ j \in \{0, \ldots, k - 1 \} \mid a_j = 0 \} $$
and let~$B : \N \to \N$ be the function defined by $B(1) = 1$ and for~$k \ge 2$ by
$$ B(k) \= 1 + \# \{ j \in \{ 0, \ldots, k - 2 \} \mid a_j \neq a_{j + 1} \}. $$
Then for every~$k$ in~$\{1, \ldots, \ell_0^2 \}$ we have~$N(k) = 0$ and~$B(k) = 1$ and for every~$k \ge \ell_0^2 + 1$ we have
\begin{equation}
  \label{e:approximating N}
B(k) \le 2(\sqrt{k} - \ell_0) + 3
\text{ and }
N \cdot (\sqrt{k} - \ell_0) \le N(k) \le N \sqrt{k}.
\end{equation}
\end{lemm}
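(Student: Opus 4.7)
The proof is a direct combinatorial count; set $L \= \lfloor \sqrt{k - 1} \rfloor$ and enumerate the maximal constant runs of $a_0, \ldots, a_{k-1}$.

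The first step is to record the structural consequence of the hypothesis $2\ell_0 \ge N$: for every $\ell \ge \ell_0$ one has $\ell^2 + N - 1 < (\ell + 1)^2$, so the prescribed zero intervals $[\ell^2, \ell^2 + N - 1]$ are pairwise disjoint and separated by at least one index. Consequently, the sequence $(a_k)_{k \ge 0}$ begins with $\ell_0^2$ ones, and then, for each $\ell \ge \ell_0$ in turn, consists of a block of $N$ zeros followed by a block of $2\ell + 1 - N$ ones. For $k \in \{1, \ldots, \ell_0^2\}$ all the relevant indices lie in the initial all-ones run, and the claim $N(k) = 0$, $B(k) = 1$ is immediate.

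Assume now $k \ge \ell_0^2 + 1$, so $L \ge \ell_0$. The above structure shows that for every $\ell \in \{\ell_0, \ldots, L - 1\}$ both the $\ell$-th zero block and the $\ell$-th ones block are fully contained in $\{0, \ldots, k - 1\}$, whereas the $L$-th zero block contributes $m \= \min(k - L^2, N)$ zeros and is possibly followed by a partial ones block. Hence
\[
N(k) \;=\; N \cdot (L - \ell_0) + m
\qquad\text{and}\qquad
B(k) \;\in\; \bigl\{\, 2(L - \ell_0) + 2,\ 2(L - \ell_0) + 3 \,\bigr\}.
\]
Using $L \le \sqrt{k - 1} < \sqrt{k}$ and $\ell_0 \ge 1$, the bound $B(k) \le 2(\sqrt{k} - \ell_0) + 3$, as well as $N(k) \le N(L - \ell_0 + 1) \le N\sqrt{k}$, follow at once.

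The only non-routine inequality is the lower bound $N (\sqrt{k} - \ell_0) \le N(k)$, which I rewrite as $N L + m \ge N \sqrt{k}$ and split into two cases. If $m = N$, then since $(L + 1)^2 > k - 1$ forces $(L + 1)^2 \ge k$ and hence $L + 1 \ge \sqrt{k}$, the desired inequality is immediate. If $m = k - L^2 < N$, the inequality rewrites as $(\sqrt{k} - L)(\sqrt{k} + L) \ge N (\sqrt{k} - L)$; dividing by $\sqrt{k} - L > 0$ reduces it to $\sqrt{k} + L \ge N$, which holds because $\sqrt{k} \ge \ell_0$ and $L \ge \ell_0$ give $\sqrt{k} + L \ge 2 \ell_0 \ge N$. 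This is the one place where the hypothesis $2\ell_0 \ge N$ is used in an essential way for the inequalities (its other role being to guarantee the clean structural description of the sequence used above), and I expect this case split to be the main, though still elementary, point to check.
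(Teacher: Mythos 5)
Your proof is correct and follows essentially the same route as the paper: you determine the position of $k-1$ relative to the $L$-th zero block (via $m=\min(k-L^2,N)$), obtain the exact formula $N(k)=N(L-\ell_0)+m$, and then split into the same two cases the paper uses, with the hypothesis $2\ell_0\ge N$ entering in exactly the same place — to show $\sqrt{k}+L\ge N$ for the lower bound. The only cosmetic difference is that you unify the two formulas for $N(k)$ through $m$ before splitting, and you spell out the straightforward $B(k)$ count that the paper leaves implicit.
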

\begin{proof}
The assertions for~$k$ in~$\{1, \ldots, \ell_0^2 \}$ and the upper bound of~$B(k)$ are straight forward consequences of the definitions.
Let~$k \ge \ell_0^2 + 1$ be a given integer.
If there is an integer $\ell \ge \ell_0$ such that $\ell^2 \le k - 1 \le \ell^2 + N - 1$, then
\[
N(k) =  N \cdot (\ell - \ell_0) + k - \ell^2
\]
and therefore
\begin{align*}
N \cdot (\sqrt{k}-\ell_0) + (\sqrt{k}-\ell)(\sqrt{k} + \ell - N)
& =
N(k)
\\ & \le
N\sqrt{k} + N \cdot (\ell + 1 - \sqrt{k} -\ell_0).
\end{align*}
Using $N\le 2\ell$ and $\ell + 1 - \sqrt{k} \le 1$, we obtain the estimates for~$N(k)$ in~\eqref{e:approximating N}.
Suppose there is an integer~$\ell \ge \ell_0$ such that
$$ \ell^2 + N \le  k - 1 \le (\ell+1)^2 - 1. $$
Then $N(k) =  N \cdot (\ell - \ell_0 + 1)$ and we also get the bounds for~$N(k)$ in~\eqref{e:approximating N}.
\end{proof}

\begin{proof}[Proof of the Main Theorem]
Let $n_0 \ge 3$ and $C_0 > 1$ be given by Proposition~\ref{p:first-order phase transition} and let $\Delta_1 > 1$ and~$\Delta_2 > 1$ be given by Lemmas~\ref{l:landing to central derivative} and~\ref{l:distortion to central 0}, respectively.

For a given parameter~$c$ in~$\cP_{3}(-2)$ denote by~$p(c)$ the unique fixed point of~$g_c = f_c^3|_{Y_c \cup \tY_c}$  in~$Y_c$ and by~$\wtp(c)$ the unique fixed point of~$g_c$ in~$\tY_c$, see~\S\ref{ss:expanding Cantor set}.
Each of the functions
$$ p : \cP_3(-2) \to \C
\text{ and }
\wtp : \cP_3(-2) \to \C $$
so defined is holomorphic and real.
By Lemma~\ref{l:period 3 transversality} in Appendix~\ref{s:appendix} there is~$\delta > 0$ such that for each parameter~$c$ in the interval~$(-2, -2 + \delta)$ we have
$$ \eta_c \= \frac{|Dg_c(p(c))|}{|Dg_c(\wtp(c))|} > 1. $$
Since for~$c = - 2$ we have
$$ |Dg_{-2}(\wtp(-2))|^{1/3} = 2
\text{ and }
|Df_{-2}(\beta(-2))| = 4, $$
taking~$\delta > 0$ smaller if necessary we assume that for each~$c$ in~$(-2, -2 + \delta)$ we have
\begin{equation}
\label{e:first entrance derivative}
2/3
>
|Dg_c(\wtp(c))|^{1/3} / |f_c(\beta(c))|
>
1/3.
\end{equation}
By Proposition~\ref{p:ps} there is~$n_1 \ge 3$ such that for each integer~$n \ge n_1$ the set~$\cK_n$ is contained in~$(-2, -2 + \delta)$.

Fix a sufficiently large integer~$n \ge \max \{n_0, n_1 \}$ such that
\begin{equation}
\label{e:bound C_1}
\Delta_1^{1/2} \Delta_2^{3/2} (2/3)^{n/2}
<
C_0^{-1}/2.
\end{equation}
Since~$\cK_n$ is compact, we have
$$ \eta \= \inf \{ \eta_c \mid c \in \cK_n \} > 1. $$
Let~$N \ge 1$ be sufficiently large so that $\Delta_2^2 \eta^{- N} < 1$ and 
let~$\ell_0 \ge 1$ be a sufficiently large integer so that 
$$ 2 \ell_0 \ge N 
\text{ and }
\ell_0^2 > C_0^3 (\Delta_1 \Delta_2 3^n)^{1/2}. $$
By Proposition~\ref{p:ps} there is a unique parameter~$c_0$ in~$\cK_n$ such that~$\iota(c_0)$ is given by the sequence~$(a_k)_{k = 0}^{+ \infty}$ defined in Lemma~\ref{l:phased itinerary} for these choices of~$N$ and~$\ell_0$.
To prove the Main Theorem we just need to show that the hypotheses of Proposition~\ref{p:first-order phase transition} are satisfied for this choice of~$n$ and for $c = c_0$ and~$t_0 = 3$.

Note for each integer~$k \ge 1$ the number~$N(k)$ is equal to the number of~$0$'s in the sequence~$(a_j)_{j = 0}^{k - 1}$ and that~$B(k)$ is the number of blocks of~$0$'s or~$1$'s in this sequence.
Let~$k$ be an integer satisfying~$k \ge \ell_0^2 + 1$.
Applying Lemma~\ref{l:distortion to central 0} to each block of~$0$'s or~$1$'s in~$( a_j )_{j = 0}^{k - 1}$, we obtain by~\eqref{e:approximating N} and by the definition of~$\eta$,
\begin{align}
\begin{split}
\label{e:strech exponential estiamte}
\Delta_2^{2(\sqrt{k} -\ell_0 )+3} \left(
\frac{|Dg_{c_0}(p(c_0))|}{|Dg_{c_0}(\wtp(c_0))|} \right)^{N \sqrt{k}}
& \ge
\frac{|Dg_{c_0}^k(f_{c_0}^n(c_0))|}{|Dg_{c_0}(\wtp(c_0))|^k}
\\ & \ge
\Delta_2^{- 2(\sqrt{k} - \ell_0) - 3} \left(
\frac{|Dg_{c_0}(p(c_0))|}{|Dg_{c_0}(\wtp(c_0))|} \right)^{N \cdot (\sqrt{k} - \ell_0)}
\\ & \ge
\Delta_2^{- 3} (\Delta_2^2\eta^{-N})^{-(\sqrt{k} - \ell_0)}.
\end{split}
\end{align}
This implies that
\begin{align}
\begin{split}
\label{eq:chicritbase}
\chicritbase
& =
\lim_{m \to + \infty} \frac{1}{m} \log |Df_{c_0}^m(c_0)|
\\ & =
\frac{1}{3} \lim_{k \to + \infty} \frac{1}{k} \log
|Dg_{c_0}^k(f_{c_0}^n(c_0))|
\\ & =
\log |Dg_c(\wtp(c_0))|^{1/3},
\end{split}
\end{align}
and, by Lemma~\ref{l:landing to central derivative} with~$k = n$ and~$y = c_0$ and by~\eqref{e:first entrance derivative} and~\eqref{e:bound C_1}, that for each integer~$k \ge \ell_0^2 + 1$ we have
\begin{equation}
\label{e:individual term 2}
\exp((n + 3k) \chicritbase/2) |Df_{c_0}^{n + 3k}(c_0)|^{-1/2}
\le
(C_0^{-1}/2) (\Delta_2^2 \eta^{- N})^{(\sqrt{k}-\ell_0)/2}.
\end{equation}
This implies that for every~$t > 0$ the sum
$$ \sum_{k = 0}^{+ \infty} k \cdot \exp(t (n + 3k) \chicritbase/2)
|Df_{c_0}^{n + 3k}(c_0)|^{-t/2}$$
is finite and hence that the last hypothesis of Proposition~\ref{p:first-order phase transition} is automatically satisfied when the other ones are.

To prove that the rest of the hypotheses of Proposition~\ref{p:first-order phase transition} are satisfied, observe that by~\eqref{eq:chicritbase}, by Lemma~\ref{l:distortion to central 0}, by Lemma~\ref{l:landing to central derivative} with~$k = n$ and~$y = c_0$ and by~\eqref{e:first entrance derivative} and~\eqref{e:bound C_1}, for each integer~$k$ in~$\{0, 1,\ldots, \ell_0^2 \}$ we have
\begin{align}
\label{e:individual term 1}
\begin{split}
\Delta_1^{-1/2} \Delta_2^{-1/2} (1/3)^{n/2}
& \le
\exp((n + 3k) \chicritbase/2) |Df_{c_0}^{n + 3k}(c_0)|^{-1/2}
\\ & \le 
\Delta_1^{1/2}\Delta_2^{1/2} (2/3)^{n/2}
\\ & <
C_0^{-1}/2.
\end{split}
\end{align}
So, if for each~$t > 0$ we put 
\[
S(t) \= \sum_{k=\ell_0^2 + 1}^{+\infty}
(\Delta_2^2\eta^{-N})^{t(\sqrt{k}-\ell_0)/2},
\]
then by~\eqref{e:individual term 2} we have
\begin{equation*}
 \sum_{k = 0}^{+ \infty}  \exp(t (n + 3k) \chicritbase/2) |Df_{c_0}^{n + 3k}(c_0)|^{-t/2}
<
\frac{C_0^{-t}}{2^t}(\ell_0^2 + 1 + S(t)).
\end{equation*}
By our choice of~$N$ we have~$\Delta_2^2 \eta^{-N} < 1$ and hence~$S(t) \to 0$ as~$t \rightarrow +\infty$.
Together with the fact that~$C_0 > 1$, this proves that the sum above converges to~$0$ as~$t \to + \infty$.
Finally, note that by~\eqref{e:individual term 1} and our choice of~$\ell_0$, the sum above with~$t = 3$ is greater than~$C_0^3$.
This completes the proof that the hypotheses of Proposition~\ref{p:first-order phase transition} are satisfied with~$c = c_0$ and~$t_0 = 3$ and thus completes the proof of the Main Theorem.
\end{proof}

\section{Expansion away from the critical point}
\label{s:landing derivatives}
For an integer~$n \ge 4$ and a parameter~$c$ in~$\cP_n(-2)$ put~$V_c \= P_{c, n + 1}(0)$ and denote by~$D_c'$ the set of all those points~$z$ in~$\C \setminus V_c$ for which there is an integer~$m \ge 1$ such that~$f^{m}(z)$ is in~$V_c$; for such~$z$ denote by~$m_c(z)$ the least integer~$m$ with this property and call it the \emph{first landing time of~$z$ to~$V_c$}.
The \emph{first landing map to~$V_c$} is the map~$L_c : D_c' \to V_c$ defined by~$L_c(z) = f_c^{m_c(z)}(z)$.

The purpose of this section is to prove the following proposition.
\begin{propalph}
\label{p:landing derivatives}
There is a constant~$C_1 > 1$ such that for each~$\varepsilon > 0$ there is~$n_2 \ge 4$ such that the following property holds: For each
integer~$n \ge n_2$, each parameter~$c$ in~$\cK_n$, and each~$z$ in~$L_c^{-1}(V_c)$ we have
$$ |DL_c(z)|
\ge
C_1^{-1} 2^{m_c(z)(1 - \varepsilon)}. $$
\end{propalph}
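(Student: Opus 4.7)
The plan is to combine the Koebe distortion estimate of Lemma~\ref{l:distortion to central 0} with the explicit derivative bound of Lemma~\ref{l:landing to central derivative}, exploiting the fact that for~$n$ large, parameters in~$\cK_n$ lie close to~$c = -2$, where $|Df_{-2}(\beta(-2))| = 4$. First, I would fix $\varepsilon > 0$ and use Proposition~\ref{p:ps} (which gives $\cK_n \subset (-2, -2 + \delta)$ for~$n$ large) together with continuity of $c \mapsto |Df_c(\beta(c))|$ to choose $n_2 \ge 4$ so that $|Df_c(\beta(c))| \ge 2^{2-\varepsilon/2}$ for every $c \in \cK_n$ with $n \ge n_2$. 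This yields expansion rate nearly~$4$ near the $\beta$ fixed point, giving substantial margin over the required rate~$2^{1-\varepsilon}$.

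Given $z \in L_c^{-1}(V_c)$ with first landing time $m = m_c(z)$ and orbit $z_j = f_c^j(z)$, I would consider the puzzle piece $W \= P_{c, m+1}(z_0)$, which is mapped by $f_c^m$ into $P_{c, 1}(z_m) = P_{c, 1}(0)$. The key structural observation is that the first-landing condition $z_j \notin V_c = P_{c, n+1}(0)$ for $j < m$ forces~$f_c^{m-n}$ to be univalent on~$W$: failure at step~$j \le m - n - 1$ would require $0 \in P_{c, m+1-j}(z_j)$, i.e., $z_j \in P_{c, m+1-j}(0)$, and since $m+1-j \ge n+2$ we have $P_{c, m+1-j}(0) \subsetneq V_c$, contradicting the first-landing condition. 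Thus $f_c^{m-n}$ sends~$W$ biholomorphically onto $P_{c, n+1}(z_{m-n}) \ne V_c$.

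Next, using Lemma~\ref{l:distortion to central 0} (or a direct application of its proof adapted to the nonstandard target, which still avoids the postcritical set thanks to Lemma~\ref{l:off postcritical}), the distortion of~$f_c^{m-n}$ on~$W$ is bounded by an absolute constant. To convert this into a lower bound on $|Df_c^{m-n}(z)|$, I would decompose the orbit into Markov excursions: between consecutive visits to $P_{c, 1}(0)$, the orbit traverses a chain $V_{c, k} \to \tV_{c, k-1} \to \cdots \to \tV_{c, 1} \to P_{c, 1}(0)$ (or analogous structure), on which Lemma~\ref{l:landing to central derivative} gives a derivative factor $\ge \Delta_1^{-1} |Df_c(\beta(c))|^k \ge \Delta_1^{-1} 2^{k(2-\varepsilon/2)}$. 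Telescoping over all such excursions (their lengths summing to essentially~$m$) yields $|Df_c^{m-n}(z)| \ge C^{-1} 2^{(m-n)(2-\varepsilon/2)}$ for an absolute constant~$C$.

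For the remaining $n$ iterates, since $z_{m-n}$ lies in a puzzle piece of depth $n+1$ that is disjoint from the critical nest, a parallel application of Lemma~\ref{l:landing to central derivative} (together with the Markov structure and the fact that $z_m \in V_c$ means the last excursion is determined by landing in~$V_c$) gives $|Df_c^n(z_{m-n})| \ge C^{-1} 2^{n(2 - \varepsilon/2)}$. Combining, $|DL_c(z)| \ge C^{-2} 2^{m(2 - \varepsilon/2)}$, which comfortably exceeds $C_1^{-1} 2^{m(1-\varepsilon)}$ once $m$ is larger than an absolute constant; the finitely many remaining small-$m$ cases (for $m$ below that threshold) are absorbed into~$C_1$ by compactness of~$\cK_n$ and continuity of the finite-iterate derivatives. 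The main obstacle is executing the Markov decomposition rigorously enough to apply Lemma~\ref{l:landing to central derivative} on each excursion (which requires that each excursion is entirely contained in a puzzle piece of the form $P_{c, k}(\pm \beta(c))$) and arranging that the distortion bound applies uniformly in~$n$ so that the constant~$C_1$ is truly absolute rather than dependent on~$\varepsilon$.
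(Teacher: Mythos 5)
Your proposal has a genuine gap, visible already in the claimed final bound $|DL_c(z)|\ge C^{-2}2^{m(2-\varepsilon/2)}$: that is a per-iterate rate of roughly~$4$, which cannot hold for a first-landing map (the correct asymptotic rate is~$\approx 2$, and the proposition claims only $2^{m(1-\varepsilon)}$). The error is in the excursion bookkeeping. An escape chain $V_{c,k}\to\tV_{c,k-1}\to\cdots\to P_{c,1}(0)$ does have per-iterate rate $\approx|Df_c(\beta(c))|\approx 4$, but it accounts for at most half the orbit. The orbit may re-enter $P_{c,1}(0)$ at any depth $j\le n$ (it is only required to avoid $V_c=P_{c,n+1}(0)$), and each such re-entry spends one iterate at a point of distance $\approx 2^{-j}$ from the critical point, where $|Df_c|\approx 2^{-j}$. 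The product of that re-entry iterate with the subsequent escape of length $j-1$ is $\approx 2^{-j}\cdot 4^{j-1}\approx 2^{j}$ over $j$ iterates, i.e.\ per-iterate rate $\approx 2$. Your claim that ``their lengths summing to essentially $m$'' silently omits exactly the iterates where the derivative is small.

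Even after correcting the bookkeeping to rate $\approx 2$, a second problem remains: applying Lemma~\ref{l:landing to central derivative} separately to each escape chain accrues a distortion factor $\Delta_1$ per chain, and for shallow re-entries (small $j$) the per-iterate rate becomes $\approx 2^{1-1/j}\Delta_1^{-1/j}$, which drops below $2^{1-\varepsilon}$ once $j<\log_2(2\Delta_1)/\varepsilon$. This is precisely what Lemma~\ref{l:quasi tent estimate} is designed to fix: by conjugating $f_c$ to a near--tent map via $h_c(\theta)=\beta(c)\cos(\pi\theta)$, the derivative becomes uniformly $\approx 2$ on orbit segments avoiding $P_{c,m_1}(0)$, and the re-entry/escape distortion is absorbed once rather than chain by chain. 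The paper therefore splits re-entries into shallow (depth $<m_1$, handled by the tent-map comparison) and deep (depth $\ge m_1$, handled by part~$2$ of Lemma~\ref{l:first return to central derivative}), with $m_1$ chosen so that the distortion $C_2C_3$ incurred per deep dive is absorbed by the forced length $\ell_s'-\ell_s\ge m_1-1$ of that dive. Your plan has the right combinatorial skeleton --- the observation that $f_c^m$ is univalent on $P_{c,m+1}(z)$ is correct and is exactly the paper's Lemma~\ref{l:univalent pull-back property}, and the deep excursions are indeed governed by Lemma~\ref{l:first return to central derivative} --- but without the tent-map comparison and the shallow/deep dichotomy it cannot reach the stated estimate.
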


To prove this proposition we first show that the restriction of~$L_c$ to each connected component of its domain admits a univalent extension onto~$P_{c, 1}(0)$ (Lemma~\ref{l:univalent pull-back property}).
The proof of the Proposition~\ref{p:landing derivatives} is given in~\S\ref{ss:landing derivatives}, after some derivative
estimates stated as Lemmas~\ref{l:quasi tent estimate} and~\ref{l:first return to central derivative}.

\subsection{Univalent pull-back property}
\label{ss:univalent pull-back property}
Note that the domain~$D_c'$ of~$L_c$ is a disjoint union of puzzle pieces, so each connected component of~$D_c'$ is a puzzle piece.
Furthermore, for each connected component~$W$ of~$D_c'$ the first landing time to~$V_c$ of all points in~$W$ is the same; denote the common value by~$m_c(W)$.
So~$L_c$ maps~$W$ biholomorphically to~$V_c$.
\begin{lemm}[Univalent pull-back property]
\label{l:univalent pull-back property}
For every integer~$\ene \ge 0$ and every parameter~$c$ in~$\cP_{\ene}(-2)$, the following property holds.
Let $m \ge 1$ be an integer and~$z$ a point in~$f_c^{-m}(P_{c, 1}(0))$ such that for each~$j$ in~$\{0, \ldots, m - 1 \}$ we have~$f_c^j(z) \not \in P_{c, \ene + 1}(0)$.
Then the puzzle piece~$P$ of depth~$m + 1$ containing~$z$ is such that for every~$j$ in~$\{ 0, \ldots, m - 1 \}$ the set~$f_c^j(P)$ is disjoint from~$P_{c,
\ene + 1}(0)$ and~$f_c^m$ maps~$P$ biholomorphically to~$P_{c, 1}(0)$.
If in addition~$c$ is real, then~$P$ intersects the real line.
\end{lemm}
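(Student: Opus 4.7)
The plan is to prove the lemma by induction on $m \ge 1$. For the base case $m = 1$, the equation $f_c^{-1}(P_{c,1}(0)) = V_{c, 1} \cup \tV_{c, 1}$ established in the proof of Lemma~\ref{lem:Markov partition} gives that $P$ coincides with one of these depth-$2$ puzzle pieces; the disjointness from $P_{c, 1}(0) \supseteq P_{c, \ene+1}(0)$, biholomorphicity of $f_c|_P$, and (for real $c$) the intersection with $\R$ are then all immediate from the construction of $V_{c, 1}$ and $\tV_{c, 1}$ in~\S\ref{ss:landing to central}.

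For the inductive step, I would apply the inductive hypothesis to the point $f_c(z)$ with $m$ replaced by $m - 1$; the hypotheses transfer immediately since $f_c^{j+1}(z) = f_c^j(f_c(z))$ for $j \in \{0, \ldots, m-2\}$. This yields a depth-$m$ puzzle piece $P' \= P_{c, m}(f_c(z))$ with all the claimed properties. Since $f_c$ maps puzzle pieces of depth $m+1$ onto puzzle pieces of depth $m$, one has $f_c(P) = P'$, so $f_c^j(P) = f_c^{j-1}(P')$ for $j \in \{1, \ldots, m-1\}$, and the disjointness of these sets from $P_{c, \ene+1}(0)$ is immediate from the inductive hypothesis.

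The crux is to verify that $0 \notin P$; assuming $0 \in P$, so that $P = P_{c, m+1}(0)$, I would split into two cases. If $m + 1 \ge \ene + 1$, then $P \subseteq P_{c, \ene + 1}(0)$, forcing $z \in P_{c, \ene+1}(0)$ and contradicting the hypothesis at $j = 0$. If $m + 1 < \ene + 1$, then by part~$2$ of Lemma~\ref{lem:auxiliary para-puzzle pieces} we have $c \in P_{c, \ene}(-\beta(c))$, and iterating gives $f_c^{m - 1}(c) \in P_{c, \ene - m + 1}(\beta(c)) \subseteq P_{c, 1}(\beta(c))$, so that $f_c^m(P) = P_{c, 1}(f_c^{m-1}(c)) \subseteq P_{c, 1}(\beta(c))$; this is disjoint from $P_{c, 1}(0)$, contradicting $f_c^m(z) \in P_{c, 1}(0)$. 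Once $0 \notin P$ is established, $f_c : P \to P'$ is a biholomorphism, and composition with $f_c^{m-1} : P' \to P_{c, 1}(0)$ from the inductive hypothesis yields the biholomorphism $f_c^m: P \to P_{c,1}(0)$. The disjointness of $P$ from $P_{c, \ene + 1}(0)$ then follows from the puzzle piece hierarchy: $0 \notin P$ rules out $P \supseteq P_{c, \ene+1}(0)$, and $P \subseteq P_{c, \ene+1}(0)$ is excluded by the $j = 0$ hypothesis.

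For the final (real) statement, I would continue the induction by showing that each component of $f_c^{-1}(P')$ is invariant under complex conjugation, which suffices since such a nonempty open component must meet $\R$. The key observation is that both components are conjugation-invariant precisely when $P' \cap \R$ contains a value $y_0 > c$, for then $\pm\sqrt{y_0 - c}$ are real preimages in the two distinct components; conversely, if $P' \cap \R \subseteq (-\infty, c)$, the two preimages of any real $y_0 \in P'$ are purely imaginary complex conjugates and conjugation swaps the components. I expect the \emph{main obstacle} to be verifying the reachability condition $P' \cap \R \cap (c, +\infty) \neq \emptyset$ at every stage of the induction; this should follow from the equivariance under conjugation of the biholomorphism $f_c^{m-1}|_{P'} : P' \to P_{c, 1}(0)$, which identifies $P' \cap \R$ with an interval contained in $P_{c,1}(0) \cap \R = (\alpha(c), -\alpha(c))$ that contains $0$, combined with the avoidance hypothesis excluding the pathological puzzle pieces in the ``far left'' part of $P_{c, 0}(-\beta(c))$ on which a real pullback fails.
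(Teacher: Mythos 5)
Your argument for the biholomorphicity and disjointness assertions is correct and takes a genuinely different route from the paper's. Where the paper singles out the smallest $\ell$ with $f_c^\ell(P)\subseteq P_{c,1}(0)$ and uses Lemma~\ref{l:landing to central} to control that piece, you instead apply the inductive hypothesis directly to $f_c(z)$ and then rule out $0\in P$ by contradiction, splitting on $m+1\ge \ene+1$ versus $m+1<\ene+1$. Both cases of that contradiction argument check out (in the second you correctly track $f_c^{m-1}(c)\in P_{c,\ene-m+1}(\beta(c))\subseteq P_{c,1}(\beta(c))$, which forces $f_c^m(P)=P_{c,1}(\beta(c))\ne P_{c,1}(0)$), and the nesting/disjointness dichotomy for puzzle pieces then gives $P\cap P_{c,\ene+1}(0)=\emptyset$. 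This is a clean alternative.

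The real-line statement, however, has a genuine gap. You correctly reduce it to showing $P'\cap\R$ meets $(c,+\infty)$, which is the same sufficient condition the paper uses, but the justification you sketch is not correct. The biholomorphism $f_c^{m-1}|_{P'}\colon P'\to P_{c,1}(0)$ is surjective, so it maps the interval $P'\cap\R$ \emph{onto} $(\alpha(c),\talpha(c))$; it does not identify $P'\cap\R$ with a subinterval of $P_{c,1}(0)\cap\R$, and $P'\cap\R$ need not contain~$0$. The correct input is a positional fact about $P'$ relative to $c$, not an internal fact about its biholomorphic image. Concretely, either follow the paper's case split (when $z\notin P_{c,1}(0)$, note $P=\phi_c(P')$ or $\tphi_c(P')$ with $\phi_c$, $\tphi_c$ real maps defined on $P_{c,0}(\beta(c))\supseteq P'$, so $P$ meets $\R$ immediately; when $z\in P_{c,1}(0)$, the hypothesis $z\notin P_{c,\ene+1}(0)$ together with Lemma~\ref{l:landing to central} puts $f_c(z)$ in some $V_{c,\ell}$ with $\ell\le\ene-1$, so $P'\subseteq V_{c,\ell}$ and $P'\cap\R\subseteq V_{c,\ell}\cap\R\subseteq(c,+\infty)$), or, staying closer to your framework, first prove that $P'$ is disjoint from $P_{c,\ene}(-\beta(c))$: this follows from $P\cap P_{c,\ene+1}(0)=\emptyset$ together with $f_c^{-1}(P_{c,\ene}(-\beta(c)))=P_{c,\ene+1}(0)$ when $m\ge\ene$, and from $f_c^{m-1}(P')=P_{c,1}(0)\ne P_{c,1}(\beta(c))=f_c^{m-1}(P_{c,m}(-\beta(c)))$ when $m<\ene$; since $c\in P_{c,\ene}(-\beta(c))$ and $P'\cap\R$ is a nonempty interval of points in the filled Julia region, disjointness pushes $P'\cap\R$ to the right of $\alpha_{\ene-1}(c)>c$. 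Either completion makes the argument rigorous, but as written the appeal to the biholomorphism and to an unspecified ``avoidance hypothesis'' does not establish the needed inclusion.
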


The proof of this lemma is below, after the following one.
\begin{lemm}
\label{l:landing to central}
Let~$c$ be a parameter in~$\cP_0(-2)$, let~$\ell \ge 1$ be an integer, and let~$z$ be a point in~$f_c^{-\ell}(P_{c, 1}(0))$ such that for each~$j$ in~$\{0, \ldots, \ell - 1 \}$ the point~$f_c^j(z)$ is not in~$P_{c, 1}(0)$.
Then~$z$ is in~$V_{c, \ell}$ or in~$\tV_{c, \ell}$.
\end{lemm}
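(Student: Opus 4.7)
The plan is to prove this by induction on $\ell$, using the Markov structure of the depth-$1$ puzzle pieces together with the recursive definitions $V_{c,n} = \phi_c(\tV_{c,n-1})$ and $\tV_{c,n} = \tphi_c(\tV_{c,n-1})$.

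For the base case $\ell = 1$, I would argue directly: the only puzzle pieces of depth $1$ are $P_{c,1}(-\beta(c))$, $P_{c,1}(0)$, and $P_{c,1}(\beta(c))$, and since $f_c$ maps each of $P_{c,1}(-\beta(c))$ and $P_{c,1}(\beta(c))$ biholomorphically onto $P_{c,0}(\beta(c)) \supset P_{c,1}(0)$ while $f_c(P_{c,1}(0)) = P_{c,0}(-\beta(c))$ is disjoint from $P_{c,1}(0)$, the full preimage $f_c^{-1}(P_{c,1}(0))$ equals $\phi_c(P_{c,1}(0)) \cup \tphi_c(P_{c,1}(0)) = V_{c,1} \cup \tV_{c,1}$, which gives the conclusion since by hypothesis $z \notin P_{c,1}(0)$.

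For the inductive step, assuming the statement for $\ell - 1$, the point $w \= f_c(z)$ satisfies the hypotheses at level $\ell - 1$ (its first entry to $P_{c,1}(0)$ happens at time $\ell - 1$), so the inductive hypothesis gives $w \in V_{c,\ell-1} \cup \tV_{c,\ell-1}$. The key observation is that $w$ cannot lie in $V_{c,\ell-1}$: indeed, $V_{c,\ell-1} \subset P_{c,1}(-\beta(c)) \subset P_{c,0}(-\beta(c))$, and $f_c^{-1}(P_{c,0}(-\beta(c))) = P_{c,1}(0)$, which would force $z \in P_{c,1}(0)$, contradicting the assumption on $z$. Therefore $w \in \tV_{c,\ell-1} \subset P_{c,1}(\beta(c))$, and then $z$ lies in $f_c^{-1}(P_{c,1}(\beta(c)))$. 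Since $P_{c,1}(\beta(c)) \subset P_{c,0}(\beta(c))$ and $f_c^{-1}(P_{c,0}(\beta(c))) = P_{c,1}(-\beta(c)) \cup P_{c,1}(\beta(c))$, the two univalent inverse branches $\phi_c$ and $\tphi_c$ map $\tV_{c,\ell-1}$ precisely to $V_{c,\ell}$ and $\tV_{c,\ell}$, respectively, so $z$ lies in one of these two sets.

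There is no real obstacle here; the proof is essentially a bookkeeping exercise once one observes the dichotomy that the preimage of $\tV_{c,\ell-1}$ is exactly $V_{c,\ell} \cup \tV_{c,\ell}$, and that a point that first enters $P_{c,1}(0)$ at time $\ell$ cannot have its image in $V_{c,\ell-1}$ (since that would force the point itself to already lie in $P_{c,1}(0)$). The most delicate part is simply to record these three inclusions — $V_{c,\ell-1} \subset P_{c,1}(-\beta(c))$, $f_c^{-1}(P_{c,0}(-\beta(c))) = P_{c,1}(0)$, and $f_c^{-1}(P_{c,1}(\beta(c))) = V_{c,\ell} \cup \tV_{c,\ell}$ — all of which are immediate from the setup in \S\ref{ss:landing to central}.
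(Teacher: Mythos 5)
Your proof is correct and follows essentially the same inductive argument as the paper: apply the inductive hypothesis to $f_c(z)$, rule out $V_{c,\ell-1}$, and pull back $\tV_{c,\ell-1}$ under the two branches $\phi_c$, $\tphi_c$. The paper reaches the same exclusion by a slightly different route (it first uses $G_c(z)\le 1/2$ and $z\notin P_{c,1}(0)$ to place $z$ in $P_{c,1}(\pm\beta(c))$, hence $f_c(z)\in P_{c,0}(\beta(c))$, which rules out $V_{c,\ell-1}$ directly), but the content is the same. One small slip in your concluding summary: the identity you list there, $f_c^{-1}(P_{c,1}(\beta(c))) = V_{c,\ell}\cup\tV_{c,\ell}$, is not correct as stated; what you actually use, and what is correct, is $f_c^{-1}(\tV_{c,\ell-1}) = \phi_c(\tV_{c,\ell-1})\cup\tphi_c(\tV_{c,\ell-1}) = V_{c,\ell}\cup\tV_{c,\ell}$.
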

\begin{proof}
We proceed by induction in~$\ell$.
The case~$\ell = 1$ follows from
$$ f_c^{-1}(P_{c, 1}(0))
=
\phi_c(P_{c, 1}(0)) \cup \tphi_c (P_{c, 1}(0))
=
V_{c, 1} \cup \tV_{c, 1}. $$
Let~$\ell \ge 2$ be an integer and suppose the desired assertion holds with~$\ell$ replaced by~$\ell - 1$.
If~$z$ is as in the lemma, then~$z$ is not in~$P_{c, 1}(0)$ and~$G_c(z) \le 1/ 2^\ell \le 1/2$.
Therefore~$z$ is in either~$P_{c, 1}(-\beta(c))$ or~$P_{c, 1}(\beta(c))$; in both cases~$f_c(z)$ is in~$P_{c, 0}(\beta(c))$.
Applying the induction hypothesis to~$f_c(z)$ we conclude that~$f_c(z)$ is in~$\tV_{c, \ell - 1}$ and therefore that~$z$ is in
$$ f_c^{-1}(\tV_{c, \ell - 1}) = V_{c, \ell} \cup \tV_{c, \ell}. $$
This completes the proof of the induction step and of the lemma.
\end{proof}

\begin{proof}[Proof of Lemma~\ref{l:univalent pull-back property}]
We proceed by induction in~$m$.
Since~$f_c^{-1}(P_{c, 1}(0)) = V_{c, 1} \cup \tV_{c, 1}$, the desired assertions clearly hold for~$m = 1$.
Given an integer~$m \ge 2$, suppose by induction that the desired assertions hold for every integer less than or equal to~$m - 1$.
Given~$z$ as in the statement of the lemma, let~$P$ be the puzzle piece of~$f_c$
of depth~$m + 1$ containing~$z$, so that~$f_c^m(P) = P_{c, 1}(0)$. 

First, we prove that $f_c^m$ maps $P$ biholomorphically to $P_{c,1}(0)$.
Let~$\ell \ge 1$ be the least integer such that~$f_c^\ell(P)$ is contained
in~$P_{c,
1}(0)$; we have~$\ell \le m$.
If~$P$ is not contained in~$P_{c, 1}(0)$, then it is contained in~$V_{c, \ell}$
or~$\tV_{c, \ell}$ (Lemma~\ref{l:landing to central}); in both cases~$P$ is contained in a puzzle piece of
depth~$\ell + 1$ that is mapped biholomorphically to~$P_{c, 1}(0)$ by~$f_c^{\ell}$.
If~$P$ is contained in~$P_{c, 1}(0)$, then~$\ell \ge 2$, $f_c(P)$ is contained in~$V_{c, \ell - 1}$ (Lemma~\ref{l:landing to central}) and hence in~$P_{c, \ell - 1}(- \beta(c))$ (part~$1$ of Lemma~\ref{lem:puzzle pieces}).
Since our hypotheses imply that~$f_c(P)$ is not contained in~$P_{c, \ene}(c)$ and since this last puzzle piece is equal to~$P_{c, \ene}(- \beta(c))$ (part~$2$ of Lemma~\ref{lem:auxiliary para-puzzle pieces}), we conclude that~$P_{c, \ene}(-\beta(c))$ is strictly contained in~$P_{c, \ell - 1}(-\beta(c))$ and therefore that~$\ell - 1 < \ene$.
So the puzzle piece of~$f_c$ of depth~$\ell + 1$ containing~$P$ is mapped
biholomorphically to~$V_{c, \ell - 1}$ by~$f_c$; this proves that in all the cases~$f_c^{\ell}$ maps the puzzle piece of depth~$\ell + 1$ containing~$P$
biholomorphically to~$P_{c, 1}(0)$ and shows the inductive step in the case where~$\ell = m$.
If~$\ell \le m - 1$, then by the induction hypothesis applied to~$m - \ell$
instead of~$m$ and with~$f_c^{\ell}(z)$ instead of~$z$, we conclude that~$f_c^{m
- \ell}$ maps~$f_c^{\ell}(P)$ biholomorphically to~$P_{c, 1}(0)$.
This completes the proof that~$f_c^m$ maps~$P$ biholomorphically to~$P_{c, 1}(0)$.

Now we prove the other assertions of the lemma.
For each~$j$ in~$\{0, \ldots, m - 1\}$ we have~$f_c^j(z) \not \in P_{c, \ene + 1}(0)$.
Let~$P'$ be the puzzle piece of depth~$m$ containing~$z' \= f_c(z)$.
By our induction hypothesis we just need to prove that~$P$ is disjoint
from~$P_{c, \ene + 1}(0)$, and if~$c$ is real, that~$P$ intersects~$\R$.
Suppose~$z$ is not in~$P_{c, 1}(0)$.
Then by Lemma~\ref{l:landing to central} there is an integer~$\ell \ge 1$
such that~$z$ belongs to~$V_{c, \ell}$ or~$\tV_{c, \ell}$.
Then~$m \ge \ell$ and~$P$ is contained in one of these sets; it follows that~$P$
is disjoint from~$P_{c, 1}(0)$ and hence from~$P_{c, \ene + 1}(0)$.
Suppose~$c$ is real.
Then the maps~$\phi_c$ and~$\tphi_c$ are both real and by our induction
hypothesis~$P'$ intersects~$\R$.
Since~$P$ is equal to either~$\phi_c(P')$ or~$\tphi_c(P')$, it follows that~$P$
also intersects~$\R$.
It remains to consider the case where~$z$ belongs to~$P_{c, 1}(0)$.
Since by hypothesis~$z$ is not in~$P_{c, \ene + 1}(0)$, the point~$z'$ is not
in~$P_{c, \ene}(- \beta(c))$.
So there is an integer~$\ell \le m - 1$ in~$\{ 1, \ldots, \ene - 1 \}$ such that~$z'$ is
in~$V_{c, \ell}$ (Lemma~\ref{l:landing to central}).
It follows that~$P'$ is contained in~$V_{c, \ell}$ and that it is therefore
disjoint from~$P_{c, \ene}(- \beta(c))$; this implies that~$P$ is disjoint
from~$P_{c, \ene + 1}(0)$.
If~$c$ is real, then by the induction hypothesis~$P'$ intersects~$\R$.
Since~$P'$ is contained in~$V_{c, \ell}$ and~$\ell \le \ene - 1$, it follows
that~$P' \cap \R$ is contained in~$f_c(\R)$.
This implies that~$P$ intersects~$\R$ and completes the proof of the induction step and of the lemma.
\end{proof}

\subsection{Derivatives estimates}
\label{ss:landing derivatives}
\begin{lemm}
\label{l:quasi tent estimate}
There is a constant~$C_2 > 1$ such that for every~$\varepsilon > 0$ and every integer~$m_1 \ge 1$ there is~$n_3 \ge 3$ such that the following property holds for each integer~$n \ge n_3$ and each parameter~$c$ in~$\cK_n$: For every integer~$m \ge 1$ and every point~$z$ in~$f_c^{-m}(P_{c, 1}(0))$ such that for each~$j$ in~$\{ 0, \ldots, m - 1 \}$ we have~$f_c^j(z) \not \in P_{c, m_1}(0)$, we have
$$ |Df_c^m(z)|
\ge
C_2^{-1} 2^{m (1 - \varepsilon)}. $$
If in addition~$z$ is in~$P_{c, 1}(0)$, then
$$ |Df_c^m(z)|
\le
C_2 2^{m (1 + \varepsilon)}. $$
\end{lemm}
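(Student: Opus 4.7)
The strategy reduces the derivative estimate to a diameter estimate on the puzzle piece containing $z$, and then compares with the Chebyshev parameter $c = -2$ to establish that estimate. Let $P$ be the puzzle piece of depth $m+1$ containing $z$. Applying Lemma~\ref{l:univalent pull-back property} with $\ene = m_1 - 1$ (permissible once $n \ge m_1 - 1$, since $\cK_n \subset \cP_n(-2) \subset \cP_{m_1-1}(-2)$) yields that $f_c^m$ maps $P$ biholomorphically onto $P_{c,1}(0)$, and that $f_c^j(P) \cap P_{c, m_1}(0) = \emptyset$ for each $j \in \{0, \ldots, m-1\}$. Then Lemma~\ref{l:distortion to central 0} bounds the distortion of $f_c^m$ on $P$ by $\Delta_2$, giving
\[
\Delta_2^{-1} \frac{\diam P_{c,1}(0)}{\diam P}
\le |Df_c^m(z)| \le
\Delta_2 \frac{\diam P_{c,1}(0)}{\diam P}.
\]
Since $\diam P_{c,1}(0)$ is uniformly bounded above and below on any compact subset of $\cP_3(-2)$ (via Lemma~\ref{lem:hm}), the problem reduces to establishing $C^{-1} 2^{-m(1+\varepsilon)} \le \diam P \le C\, 2^{-m(1-\varepsilon)}$, with the lower bound on $\diam P$ needed only when $P \subset P_{c,1}(0)$, i.e.\ when $z \in P_{c,1}(0)$.

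For the diameter bound I would compare with the Chebyshev map $f_{-2}$, using that $\cK_n$ shrinks to $\{-2\}$ as $n \to \infty$ (Proposition~\ref{p:ps}). The map $f_{-2}$ is semi-conjugate on $[-2,2]$ via $\psi(t) = -2\cos(\pi t)$ to the doubling map $t \mapsto 2t \bmod 1$, so $|Df_{-2}^m|$ is comparable to $2^m$ up to a factor bounded on compact subsets away from the repelling fixed point $\beta(-2) = 2$, where $|Df_{-2}(\beta(-2))| = 4$ produces anomalous expansion. The crucial point is that the two preimages of $-\beta(c)$ under $f_c$ are $\pm\sqrt{-\beta(c) - c}$, both of which tend to $0$ as $c \to -2$. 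Hence for $n$ large enough these preimages lie inside $P_{c, m_1}(0)$, so the hypothesis $f_c^j(z) \notin P_{c, m_1}(0)$ forces $f_c^{j+1}(z)$ to remain outside a definite neighborhood of $-\beta(c)$ and consequently $f_c^{j+2}(z)$ to stay outside a definite neighborhood of $\beta(c)$. The orbit therefore stays entirely in a region where the effective expansion is comparable to $2$, except perhaps for at most the first two iterates, and no factor of $4$ can accumulate.

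To make the estimate uniform in $m$, I would fix $n_3$ large enough that, via the holomorphic motion (Lemma~\ref{lem:hm}), puzzle pieces of depth at most some $N_0 = N_0(\varepsilon, m_1)$ lie within a factor $2^{\varepsilon N_0}$ of their $c = -2$ counterparts on the complement of fixed neighborhoods of the critical point and of $\beta(c)$. I would then decompose the biholomorphism $f_c^m|_P$ into consecutive orbit segments of length at most $N_0$, apply the Chebyshev diameter comparison to each chunk (all lying in the good region by the avoidance hypothesis), and accumulate the per-chunk diameter ratios using the $m$-independent distortion bound of Lemma~\ref{l:distortion to central 0}. The main obstacle is precisely this uniformity in $m$: the holomorphic motion only gives continuous dependence at bounded depth, and the Chebyshev estimate degenerates near $\beta(-2) = 2$; the argument works only because the combinatorial hypothesis on the orbit rules out sustained visits to the region of anomalous expansion, and then the bounded distortion lets the local per-chunk estimates compound without further loss.
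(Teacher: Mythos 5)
Your plan shares the opening moves with the paper---taking a real representative via Lemma~\ref{l:univalent pull-back property} and invoking the uniform distortion bound of Lemma~\ref{l:distortion to central 0}---but then diverges in a way that leaves a genuine gap. The paper does not reduce to a diameter comparison with $c=-2$. Instead, it conjugates $f_c$ \emph{for the given parameter $c$} by $h_c(\theta)=\beta(c)\cos(\pi\theta)$ and works with $T_c=h_c^{-1}\circ f_c\circ h_c$; the hypothesis that the orbit avoids $P_{c,m_1}(0)$ translates into the orbit of $\theta=h_c^{-1}(x)$ avoiding a fixed interval $[1/2-\tau,1/2+\tau]$, and a direct computation (smooth family, $T_{-2}$ the tent map) gives $2^{1-\varepsilon}\le|DT_c(\theta)|\le 2^{1+\varepsilon}$ for $\theta$ outside that interval, uniformly in $c$ close to $-2$. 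The estimate then follows from the chain rule and the two constants $\kappa,\hkappa$ controlling the $Dh_c$ endpoint factors.

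The crucial point your argument misses is that the orbit is \emph{not} barred from approaching $\pm\beta(c)$. Your claim that ``$f_c^{j+1}(z)$ avoids a neighborhood of $-\beta(c)$, and consequently $f_c^{j+2}(z)$ avoids a neighborhood of $\beta(c)$'' does not follow: $\beta(c)$ has $\beta(c)$ itself as a preimage, so a point near $\beta(c)$ stays near $\beta(c)$, and for the lower bound $z$ is unconstrained and can start in $P_{c,\ell}(\beta(c))$ with $\ell$ comparable to $m$, spending most of the orbit near $\beta(c)$ where $|Df_c|\approx 4$. This breaks your ``effective expansion $\approx 2$, no factor of $4$ can accumulate'' premise; any chunk-by-chunk Euclidean comparison with $f_{-2}$ would have to confront chunks where the derivative is $\approx 4^{N_0}$, not $\approx 2^{N_0}$. (It so happens that the extra expansion is harmless for the lower bound, since it overshoots, and the upper bound only applies when $z\in P_{c,1}(0)$, which does exclude such starting positions---but your written reasoning does not make these distinctions, and your per-chunk estimate would simply be false as stated.) The $h_c$-conjugacy removes this difficulty at a stroke: $h_c$ has a quadratic critical point at $\theta=0,1$, which ``square-roots'' the multiplier at $\pm\beta(c)$, so $|DT_c|$ stays $\approx 2$ there. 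Separately, the holomorphic motion of Lemma~\ref{lem:hm} only controls the puzzle \emph{skeleton} $I_{c,n+1}$ at bounded depth, not derivatives or images of interior points; the ``Chebyshev diameter comparison per chunk'' that you invoke as the load-bearing step is never given a precise meaning, and if you tried to supply one you would be led back to the conjugacy estimate the paper uses directly.
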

\begin{proof}
Let~$\Delta_2 > 1$ be the constant given by Lemma~\ref{l:distortion to central 0}.
Given a parameter~$c$ in~$[-2, 1/4)$ consider the smooth homeomorphism
$$ \begin{array}{rcl}
h_c : [0, 1] & \to & [- \beta(c), \beta(c)]  \\
\theta & \mapsto & h_c(\theta) \= \beta(c) \cos(\pi \theta);
\end{array} $$
it depends smoothly on~$c$ and when~$c = -2$ we have
$$ \inf_{\substack{x \in [-\beta(- 2), \beta(- 2)],
\\
~y \in [\alpha(- 2), \talpha(-2)]}}
|Dh_{-2}(h_{-2}^{-1}(y))|/|Dh_{-2}(h_{-2}^{-1}(x))| > 0. $$
So there is~$\delta_0 \in (0, 9/4)$ such that
$$ \kappa
\=
\inf_{c \in [-2, -2 + \delta_0]} \inf_{\substack{x \in [-\beta(c), \beta(c)],
 \\ 
y \in [\alpha(c), \talpha(c)]}}
|Dh_c(h_c^{-1}(y))| / |Dh_c(h_c^{-1}(x))|
>
0 $$
and
$$ \hkappa
\=
\inf_{c \in [-2, -2 + \delta_0]} \inf_{\substack{x \in [\alpha(c), \talpha(c)], 
\\
y \in [\alpha(c), \talpha(c)]}}
|Dh_c(h_c^{-1}(y))| / |Dh_c(h_c^{-1}(x))|
< + \infty. $$

Let~$\varepsilon > 0$ and let~$m_1 \ge 1$ be given.
Taking~$m_1$ larger if necessary, we assume~$m_1 \ge 4$.
Since~$P_{c, m_1}(0)$ depends continuously with~$c$ on~$\cP_{m_1 - 1}(-2)$ 
(\emph{cf}., Lemma~\ref{lem:hm}) and since this last set contains the closure
of~$\cP_{m_1}(-2)$ (part~$1$ of Lemma~\ref{lem:auxiliary para-puzzle pieces}), there is~$\tau > 0$
such that for each parameter~$c$ in~$\cP_{m_1}(-2)\cap [-2,1/4)$ we have
$$ [1/2 - \tau, 1/2 + \tau]
\subset
h_c^{-1}(P_{c, m_1}(0)\cap [-\beta(c), \beta(c)]). $$
For each parameter~$c$ in~$[-2, 1/4)$ let~$T_c : [0, 1] \to [0, 1]$ be the map defined by~$T_c = h_c^{-1} \circ f_c \circ h_c$.
When~$c = - 2$ the map~$T_{- 2}$ is the tent map given by $T_{-2}(\theta) = 2 \theta$ on~$[0, 1/2]$ and $T_{-2}(\theta) = 2 - 2 \theta$ on~$[1/2, 1]$.
When~$c$ is not equal to~$-2$, the map~$T_c$ is smooth on~$[0, 1]$.
A direct computation shows that there is~$\delta_1$ in~$(0, \delta_0)$ such that for each parameter~$c$ in~$[-2, -2 + \delta_1]$
and each~$\theta$ in~$[0, 1]$ satisfying~$|\theta - 1/2| \ge \tau$, we have
$$  2^{1 - \varepsilon}\le  |DT_c(\theta)| \le  2^{1 + \varepsilon}. $$
Let~$n_1$ be given by Proposition~\ref{p:ps} with~$\delta = \delta_1$.

Fix an integer~$n \ge \max \{n_1, m_1 \}$ and a parameter~$c$ in~$\cK_n$.
By Proposition~\ref{p:ps} we have $\cK_n \subset (- 2, - 2 + \delta_1)$.
Let $m \ge 1$ be an integer, $z$ a point in~$f_c^{-m}(P_{c, 1}(0))$ such that
for each~$j$ in~$\{0, \ldots, m - 1 \}$ we have~$f_c^j(z) \not \in P_{c, m_1}(0)$
and let~$P$ be the puzzle piece of~$f_c$ of depth~$m + 1$ that contains~$z$.
By Lemma~\ref{l:univalent pull-back property} with~$n$ replaced by~$m_1 - 1$ there is a real point~$x$ in~$P$
and for every~$j$ in~$\{ 0, \ldots, m - 1 \}$ the point~$f_c^j(x)$ of~$f_c^j(P)$ is not in~$P_{c, m_1}(0)$; by our choice of~$\tau$ it follows that~$h_c^{-1}(f_c^j(x))$
is not in~$[1/2 - \tau, 1/2 + \tau]$.
On the other hand, $f_c^m$ maps~$P$ biholomorphically to~$P_{c, 1}(0)$ and by 
Lemma~\ref{l:distortion to central 0} the distortion of~$f_c^m$ on~$P$ is
bounded by~$\Delta_2$.
Since~$x$ is in~$[- \beta(c), \beta(c)]$ and
$$ f_c^m(x)
\in
f_c^m(P) \cap \R
=
P_{c, 1}(0) \cap \R
=
(\alpha(c), \talpha(c)), $$
by the considerations above we have by the definition of~$\kappa$,
\begin{align*}
|Df_c^m(z)|
& \ge
\Delta_2^{-1} |Df_c^m(x)|
\\ & =
\Delta_2^{-1} |Dh_c(h_c^{-1}(f_c^m(x)))| \cdot |DT_c^m (h_c^{-1}(x))| /
|Dh_c(h_c^{-1}(x))|
\\ & \ge 
\Delta_2^{-1} \kappa 2^{m (1 - \varepsilon)}.
\end{align*}
If in addition~$z$ is in~$P_{c, 1}(0)$, then~$x$ belongs to~$(\alpha(c), \talpha(c))$ and by the considerations above we have by the definition of~$\hkappa$,
\begin{align*}
|Df_c^m(z)|
& \le
\Delta_2 |Df_c^m(x)|
\\ & =
\Delta_2 |Dh_c(h_c^{-1}(f_c^m(x)))| \cdot |DT_c^m (h_c^{-1}(x))| /
|Dh_c(h_c^{-1}(x))|
\\ & \le 
\Delta_2 \hkappa^{-1} 2^{m (1 + \varepsilon)}.
\end{align*}
This proves the lemma with~$n_2 = \max \{n_1, m_1 \}$ and~$C_2 = \Delta_2^{-1} \max \{ \kappa^{-1}, \hkappa \}$.
\end{proof}

\begin{lemm}
\label{l:first return to central derivative}
There is~$C_3 > 1$ such that for each integer~$n \ge 4$ and each
parameter~$c$ in~$\cK_n$ the following properties hold for each integer~$q \ge
1$.
\begin{enumerate}
\item[1.]
For each open set~$W$ that is mapped biholomorphically to~$P_{c, 1}(0)$ 
by~$f_c^{q}$ and each~$x$ in~$W$ we have
$$ |Df_c(x)| \ge C_3^{-1} |Df_c^{q - 1} (f_c(x))|^{-1/2}. $$
\item[2.]
If~$q - 1 \neq n$, then for each point~$x$ in~$f_c^{-1}(V_{c, q - 1})$ we have
$$ |Df_c^{q} (x)| \ge C_3^{-1} |Df_c(\beta(c))|^{q/2}. $$
\end{enumerate}
\end{lemm}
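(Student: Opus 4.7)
The plan proceeds in two stages, deducing Part~2 from Part~1.

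For Part~1, I would exploit the quadratic identity $|Df_c(x)|^2 = 4|x|^2 = 4|f_c(x) - c|$, which reduces the claim to a uniform lower bound on $|f_c(x) - c| \cdot |Df_c^{q - 1}(f_c(x))|$. To extract this bound, I plan to apply Koebe's $1/4$-theorem to the univalent extension furnished by Lemma~\ref{l:distortion to central 0}. Since $f_c^q|_W$ is biholomorphic, that lemma will extend it to a biholomorphism $\tW \to \hW_c$ on a topological disk $\tW \supset W$; non-vanishing of $(f_c^q)'$ on $\tW$ forces $0 \notin \tW$, and the factorization $f_c^q = f_c^{q - 1} \circ f_c$ then shows $f_c$ is injective on $\tW$. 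Setting $\tU \= f_c(\tW)$, we obtain a topological disk on which $f_c^{q - 1}$ restricts to a biholomorphism to $\hW_c$, and the critical value $c = f_c(0)$ lies outside $\tU$ because $f_c^{-1}(c) = \{0\}$ and $0 \notin \tW$. Setting $\psi \= (f_c^{q - 1}|_{\tU})^{-1}$, $w_0 \= f_c^q(x)$, and $r \= \dist(w_0, \partial \hW_c)$, Koebe's $1/4$-theorem applied to $\psi$ on the round disk $B(w_0, r) \subset \hW_c$ will yield
\[
\psi(B(w_0, r)) \supset B\!\left(f_c(x), \, r / (4 |Df_c^{q - 1}(f_c(x))|)\right).
\]
Since the left-hand side is contained in $\tU$ and $c \notin \tU$, the right-hand ball cannot contain $c$, whence
\[
|f_c(x) - c| \cdot |Df_c^{q - 1}(f_c(x))| \;\ge\; r/4 \;\ge\; \tfrac{1}{4} \dist(\cl{P_{c, 1}(0)}, \partial \hW_c).
\]
Continuous dependence of $P_{c, 1}(0)$ and $\hW_c$ on $c$ (Lemmas~\ref{lem:hm} and~\ref{l:pleasant couple}), together with compactness of $\cl{\cP_4(-2)}$, will provide a positive uniform lower bound for the right-hand side over $\bigcup_{n \ge 4} \cK_n$; substituting into the quadratic identity then produces Part~1.

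For Part~2, the combinatorial input I plan to use is that, for $c \in \cK_n$, the postcritical itinerary described in Lemma~\ref{l:off postcritical} forces $c \in V_{c, k}$ exactly when $k = n$; the hypothesis $q - 1 \neq n$ therefore gives $c \notin V_{c, q - 1}$, and $f_c^{-1}(V_{c, q - 1})$ splits into two components, neither of which contains $0$. The component $W$ containing $x$ is then mapped biholomorphically to $V_{c, q - 1}$ by $f_c$, and since $f_c^{q - 1}|_{V_{c, q - 1}}$ is biholomorphic onto $P_{c, 1}(0)$ (\S\ref{ss:landing to central}), the composition $f_c^q$ maps $W$ biholomorphically to $P_{c, 1}(0)$. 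Part~1 therefore applies at $x$, and combining it with Lemma~\ref{l:landing to central derivative} at $f_c(x) \in V_{c, q - 1} \subset P_{c, q - 1}(- \beta(c))$ will give
\[
|Df_c^q(x)|
= |Df_c(x)| \cdot |Df_c^{q - 1}(f_c(x))|
\ge C_3^{-1} |Df_c^{q - 1}(f_c(x))|^{1/2}
\ge C_3^{-1} \Delta_1^{-1/2} |Df_c(\beta(c))|^{(q - 1)/2}.
\]
Since $|Df_c(\beta(c))| = 2|\beta(c)|$ is bounded above uniformly on $\cl{\cP_4(-2)}$, the bounded factor $|Df_c(\beta(c))|^{1/2}$ can be absorbed into the constant to recover the desired $|Df_c(\beta(c))|^{q/2}$.

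The hardest step will be arranging the geometry in Part~1 so that the qualitative fact ``$c \notin \tU$'' becomes a quantitative lower bound. The individual ingredients are elementary—biholomorphy of the extension rules out the critical point in $\tW$, $f_c^{-1}(c) = \{0\}$ isolates the relevant preimage, and Koebe's $1/4$-theorem converts domain exclusion into a definite distance—but chaining them while preserving uniformity of $\dist(\cl{P_{c, 1}(0)}, \partial \hW_c)$ over $\bigcup_{n \ge 4} \cK_n$, via the compactness of $\cl{\cP_4(-2)}$, is the crucial move that allows the whole argument to proceed without invoking any separate modulus-to-distance lemma.
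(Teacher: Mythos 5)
Your proof is correct, and Part~2 is carried out exactly as in the paper: the combinatorial observation that $q-1\neq n$ forces $c\notin V_{c,q-1}$ (hence $0$ lies in neither component of $f_c^{-1}(V_{c,q-1})$), followed by Lemma~\ref{l:landing to central derivative} applied at $f_c(x)\in V_{c,q-1}\subset P_{c,q-1}(-\beta(c))$ and then Part~1, with the uniform bound $|Df_c(\beta(c))|\le\Xi_2$ absorbed into the constant.

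For Part~1 you take a genuinely different technical route. Both you and the paper begin with the univalent extension from Lemma~\ref{l:distortion to central 0} and exploit the quadratic identity $|Df_c(x)|^2 = 4|f_c(x)-c|$, together with the key observation that the critical value $c$ lies outside the image of the extended domain under $f_c$. The paper then converts this exclusion into a quantitative bound in two steps: Teichm{\"u}ller's module theorem applied to the annulus $\hW'\setminus\cl{f_c(W)}$ of modulus $\ge\Xi_3$ gives $\dist(f_c(W),c)\ge A_1\diam(f_c(W))$, and bounded distortion of $f_c^{q-1}|_{f_c(W)}$ then converts $\diam(f_c(W))$ into $|Df_c^{q-1}(f_c(x))|^{-1}$. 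You instead apply the Koebe $1/4$-theorem directly to $\psi=(f_c^{q-1}|_{\tU})^{-1}$ on the round disk $B(w_0,r)\subset\hW_c$, where $r=\dist(w_0,\partial\hW_c)$, which in one stroke produces $|f_c(x)-c|\cdot|Df_c^{q-1}(f_c(x))|\ge r/4$; the uniformity over $\bigcup_{n\ge4}\cK_n$ then comes from continuous dependence of $P_{c,1}(0)$ and $\hW_c$ together with compactness of $\cl{\cP_4(-2)}$, since $\cl{P_{c,1}(0)}$ is compactly contained in $\hW_c$ for each such $c$. Your route is marginally shorter and avoids the explicit modulus-to-distance conversion, while the paper's approach fits more naturally with the modulus-based bookkeeping ($\Xi_3$) it has already set up in the proof of Lemma~\ref{l:distortion to central 0}. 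Both yield constants of the same quality; neither is more general than the other.
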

\begin{proof}
Let~$\Delta_1 > 1$ and~$\Delta_2 > 1$ be the constants given by Lemmas~\ref{l:landing to central derivative} and~\ref{l:distortion to central 0}, respectively.

Since the sets~$P_{c, 1}(\beta(c))$ and $P_{c, 1}(0)$
are disjoint and depend continuously with~$c$ on~$\cP_0(-2)$
(\emph{cf}., \S\ref{ss:puzzles} and Lemma~\ref{lem:hm}) and since~$\cP_0(-2)$
contains the closure of~$\cP_4(-2)$ (part~$1$ of Lemma~\ref{lem:auxiliary para-puzzle pieces}), we have
$$ \Xi_1
\=
\inf_{c \in \cP_4(-2)} \diam(P_{c, 1}(0))
>
0
\text{ and }
\Xi_2
\=
\sup_{c \in \cP_4(-2)} |Df_c(\beta(c))|
< 
+ \infty. $$
On the other hand, for each~$c$ in~$\cP_3(-2)$ the closure of~$P_{c, 1}(0)$ is contained in~$\hW_c$ and~$\hW_c$ depends continuously with~$c$ on~$\cP_3(-2)$ (part~$2$ Lemma~\ref{l:pleasant couple}); so
$$ \Xi_3
\=
\inf_{c \in \cP_{4}(-2)} \modulus (\hW_c \setminus \cl{P_{c, 1}(0)})
>
0. $$

Let~$n \ge 4$ be a integer and~$c$ a parameter in~$\cK_n$.

\partn{1}
Note that~$f_c^{q}$ maps a neighborhood~$\hW$ of~$W$ biholomorphically to~$\hW_c$ (Lemma~\ref{l:distortion to central 0}).
So if we put~$\hW' \= f_c(\hW)$, then~$c$ is not in~$\hW'$ and~$f_c^{q - 1}$ 
maps~$\hW'$ biholomorphically to~$\hW_c$; in particular we have 
$$ \modulus (\hW' \setminus \cl{f_c(W)})
=
\modulus (\hW_c \setminus \cl{P_{c, 1}(0)})
\ge
\Xi_3. $$
Thus there is a constant~$A_1 > 0$ independent of~$n$, $c$ and~$q$ such that for every~$x$ in~$W$, we have 
\begin{equation*}
|f_c(x) - c|
\ge
\dist(f_c(W), c)
\ge
\dist(f_c(W), \partial \hW')
\ge
A_1 \diam(f_c(W))
\end{equation*}
(\emph{cf}., \cite[Teichm{\"u}ller's module theorem, {\S}II.$1$.$3$]{LehVir73}).
Thus, if we put~$A_2 \= 2(A_1 \Delta_2^{-1} \Xi_1)^{1/2}$, then by Lemma~\ref{l:distortion to central 0} with~$m = q - 1$ and with~$W$ replaced by~$f_c(W)$ we have
$$ |Df_c(x)|
\ge
2 A_1^{1/2} \diam(f_c(W))^{1/2}
\ge
A_2 |Df_c^{q - 1} (f_c(x))|^{- 1/2}. $$
This proves part~$1$ with constant~$C_3 = A_2^{-1}$.

\partn{2}
Since~$f_c(x)$ is in~$V_{c, q - 1}$ and this last set is contained in~$P_{c, q - 1}(-\beta(c))$ (part~$1$ of Lemma~\ref{lem:puzzle pieces}), by Lemma~\ref{l:landing to central derivative} with~$y = f_c(x)$ and~$k = q - 1$, we have
\begin{equation}
\label{e:landing to central derivative}
|Df_c^{q - 1} (f_c(x))| \ge \Delta_1^{-1} |Df_c(\beta(c))|^{q - 1}.
\end{equation}
Our assumption~$q - 1 \neq n$ implies that~$f_c(0) = c$ is not in~$V_{c, q - 1}$, so~$f_c^{q}$ maps the connected component~$W$ of~$f_c^{-1}(V_{c, q - 1})$ containing~$x$ biholomorphically to~$P_{c, 1}(0)$.
So the desired assertion with~$C_3$ replaced by~$C_3 (\Delta_1 \Xi_2)^{1/2}$ follows from~\eqref{e:landing to central derivative} and from part~$1$.
\end{proof}

\begin{proof}[Proof of Proposition~\ref{p:landing derivatives}]
Let~$C_2$ and~$C_3$ be the constants given by Lemmas~\ref{l:quasi tent estimate} and~\ref{l:first return to central derivative}, respectively.
Let~$m_1 \ge 2$ be sufficiently large so that
$$ 2^{(m_1 - 1) \varepsilon/2}
\ge
C_2 C_3 $$
and let~$n_3$ be given by Lemma~\ref{l:quasi tent estimate} for this choice of~$m_1$.
Notice that for~$c = -2$ we have~$Df_{-2}(\beta(-2)) = 4$.
So, in view of Proposition~\ref{p:ps}, we can take~$n_3$ larger if necessary and assume that for each parameter~$c$ in~$\cP_{n_3}(-2)$ we have
$$ |Df_c(\beta(c))|^{1/2}
\ge
2^{1 -\varepsilon /2}. $$

We prove the desired assertion with~$n_2 = n_3$ and~$C_1 = C_2$.
To do this, let~$n \ge n_3$ be an integer, $c$ a parameter in~$\cK_n$, and let~$z$ be a point in~$L_c^{-1}(V_c)$.
If for every~$j$ in~$\{0, \ldots, m_c(z) - 1 \}$ we have~$f_c^j(z) \not \in P_{c, m_1}(0)$, then the desired assertion follows from Lemma~\ref{l:quasi tent estimate} with~$m = m_c(z)$.
So we assume that there is~$\ell$ in~$\{0, \ldots, m_c(z) - 1 \}$ such that~$f_c^\ell(z)$ belongs to~$P_{c, m_1}(0)$.
Let~$k \ge 1$ be the number of all such integers, let~$\ell_1 < \ell_2 < \cdots < \ell_k$ be the increasing sequence of all of these numbers, and put~$\ell_{k + 1} \= m_c(z)$.
Given~$s$ in~$\{ 1, \ldots, k \}$ let~$\ell_s'$ be the least integer~$\ell \ge \ell_s + 1$ such that~$f_c^{\ell}(z)$ is in~$P_{c, 1}(0)$.
Then~$\ell_s' \le \ell_{s + 1}$, $\ell_s' - \ell_s \ge m_1 - 1$, and the point $f_c^{\ell_s + 1}(z)$ belongs to~$V_{c, \ell_s' - \ell_s - 1}$ (Lemma~\ref{l:landing to central}).
By our choice of~$z$, the point~$f_c^{\ell_s}(z)$ does not belong to~$V_c = P_{c, n + 1}(0) = f_c^{-1}(V_{c, n})$, so~$\ell_s' - \ell_s - 1 \neq n$ and by part~$2$ of Lemma~\ref{l:first return to central derivative} with~$q = \ell_s' - \ell_s$ and~$x = f_c^{\ell_s}(z)$ and by our choice of~$n_3$ and~$m_1$ we have
\begin{align}
\label{e:derivative from deep}
\begin{split}
|Df_c^{\ell_s' - \ell_s}(f_c^{\ell_s}(z))|
& \ge
C_3^{-1} |Df_c(\beta(c))|^{(\ell_s' - \ell_s)/2}
\\ & \ge
C_3^{-1} 2^{(\ell_s' - \ell_s)(1 - \varepsilon/2)}
\\ & \ge
C_2 2^{(\ell_s' - \ell_s)(1 - \varepsilon)}.
\end{split}
\end{align}
When~$\ell_s' = \ell_{s + 1}$ we obtain
\begin{equation}
\label{e:derivative from deep to deep}
|Df_c^{\ell_{s + 1} - \ell_s} (f_c^{\ell_s}(z))|
\ge
2^{(\ell_{s + 1} - \ell_s)(1 - \varepsilon)}.
\end{equation}
In the case where~$\ell_s' \le \ell_{s + 1} - 1$, the point~$f_c^{\ell_s'}(z)$ belongs to~$P_{c, 1}(0)$ but not to~$P_{c, m_1}(0)$; so~\eqref{e:derivative from deep} together with Lemma~\ref{l:quasi tent estimate} with~$m = \ell_{s + 1} - \ell_s'$ and with~$z$ replaced by~$f_c^{\ell_s'}(z)$ implies, by our choice of~$n_3$, that
\begin{equation*}
|Df_c^{\ell_{s + 1} - \ell_s}(f_c^{\ell_s}(z))|
\ge
|Df_c^{\ell_{s + 1} - \ell_s'}(f_c^{\ell_s'}(z))| C_2 2^{(\ell_s' - \ell_s)(1 - \varepsilon)}
\ge
2^{(\ell_{s + 1} - \ell_s)(1 - \varepsilon)}.
\end{equation*}
So in all the cases we obtain~\eqref{e:derivative from deep to deep} and therefore
\begin{equation}
\label{e:from deep to full time}
|Df_c^{m_c(z) - \ell_1} (f_c^{\ell_1}(z))|
=
\prod_{s = 1}^{k} |Df_c^{\ell_{s + 1} - \ell_s} (f_c^{\ell_s}(z))|
\ge
2^{(m_c(z) - \ell_1)(1 - \varepsilon)}.
\end{equation}
This proves the desired inequality in the case where~$\ell_1 = 0$.
If~$\ell_1 \ge 1$, then by Lemma~\ref{l:quasi tent estimate} with~$m = \ell_1$ we have
$$ |Df_c^{\ell_1} (z)|
\ge
C_2^{-1} 2^{\ell_1 (1 - \varepsilon)}. $$
Together with~\eqref{e:from deep to full time} this implies the desired inequality and completes the proof of the proposition.
\end{proof}

\section{Induced map}
\label{s:induced map}
In this section, for a parameter~$c$ in~$\cP_4(-2)$ we use the first return map~$F_c$ of~$f_c$ to~$V_c$ to study~$P_c^{\R}$ and~$P_c^{\C}$.
After some basic considerations in~\S\ref{ss:induced map}, we show that~$P_c^{\R}$ and~$P_c^{\C}$ are related to a~$2$ variables pressure function of~$F_c$ through a Bowen type formula, see Proposition~\ref{p:Bowen type formula} in~\S\ref{ss:Bowen type formula} and compare with~\cite{StrUrb03}
and~\cite{PrzRiv11}.
We do this by analyzing the convergence properties of a suitable Poincar{\'e}
series (Lemma~\ref{l:Poincare series}).
In the proof of Proposition~\ref{p:Bowen type formula} we use a lower bound for~$P_c^{\C}$ (Proposition~\ref{p:critical line} in~\S\ref{ss:critical line}) that is used again in the next section.

\subsection{Induced map}
\label{ss:induced map}
Let~$n \ge 4$ be an integer and~$c$ a parameter in~$\cK_n$.
Throughout the rest of this section put $\hV_c \= P_{c, 4}(0)$.
Since the critical value~$c$ of~$f_c$ is in~$P_{c, n}(- \beta(c))$ (part~$2$ of Lemma~\ref{lem:auxiliary para-puzzle pieces}), the closure of~$V_c = P_{c, n + 1}(0) = f_c^{-1}(P_{c, n}(- \beta(c)))$ is contained in~$\hV_c = f_c^{-1}(P_{c, 3}(- \beta(c)))$ (\emph{cf}., part~$1$ of Lemma~\ref{lem:puzzle pieces}).

Let~$D_c$ be the set of all those points~$z$ in~$V_c$ for which there is an integer~$m \ge 1$ such that~$f_c^m(z)$ is in~$V_c$.
For~$z$ in~$D_c$ denote by~$m_c(z)$ the least integer~$m$ with this property and call it the \emph{first return time of~$z$ to~$V_c$}.
The \emph{first return map to~$V_c$} is defined by
$$ \begin{array}{rcl}
F_c : D_c & \to & V_c \\
z & \mapsto & F_c(z) \= f_c^{m_c(z)}(z).
\end{array} $$
It is easy to see that~$D_c$ is a disjoint union of puzzle pieces; so each connected component of~$D_c$ is a puzzle piece.
Note furthermore that in each of these puzzle pieces~$W$, the return time function~$m_c$ is constant; denote the common value of~$m_c$ on~$W$ by~$m_c(W)$.
\begin{lemm}[Uniform bounded distortion]
\label{l:bounded distortion to nice}
There is a constant~$\Delta_3 > 1$ such that for each integer~$n \ge 5$ and each
parameter~$c$ in~$\cK_n$ the following property holds: For every connected component~$W$ of~$D_c$ the map~$F_c|_W$ is univalent and its distortion is bounded by~$\Delta_3$.
Furthermore, the inverse of~$F_c|_W$ admits a univalent extension to~$\hV_c$ taking images in~$V_c$.
In particular, $F_c$ is uniformly expanding with respect to the hyperbolic metric on~$\hV_c$.
\end{lemm}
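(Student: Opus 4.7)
\emph{Plan.} Fix $n \ge 5$, $c \in \cK_n$, a connected component $W$ of $D_c$, and a point $z_0 \in W$; set $m \= m_c(W)$. Lemma~\ref{l:off postcritical} gives $f_c^j(0) \notin \hV_c$ for every $j \ge 1$, so $0 \notin D_c$ and in particular $z_0 \ne 0$. My strategy is to build a univalent extension of $(F_c|_W)^{-1}$ to $\hV_c$ whose image lies in $V_c$; the distortion bound and hyperbolic expansion will then follow from Koebe Distortion Theorem and Schwarz--Pick, respectively.

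To construct the extension, I apply Lemma~\ref{l:univalent pull-back property} with $\tilde n = n$, initial point $f_c(z_0)$, and exponent $m - 1$: the hypothesis $f_c^{m-1}(f_c(z_0)) = F_c(z_0) \in V_c \subseteq P_{c,1}(0)$ holds, and $f_c^{j+1}(z_0) \notin V_c$ for $j = 0, \ldots, m - 2$ by the definition of first return. This yields a depth-$m$ puzzle piece $P'$ containing $f_c(z_0)$ such that $f_c^{m-1}: P' \to P_{c,1}(0)$ is a biholomorphism. Since $\hV_c \subseteq P_{c,1}(0)$, pulling $\hV_c$ back through this biholomorphism gives a depth-$(m+3)$ puzzle piece $P_1 \subseteq P'$ with $f_c^{m-1}: P_1 \to \hV_c$ biholomorphic. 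Lemma~\ref{l:off postcritical} also gives $f_c^{m-1}(c) = f_c^m(0) \notin \hV_c$, forcing $c \notin P_1$; hence $f_c^{-1}(P_1)$ splits into two components, and letting $\hW$ denote the one containing $z_0$, the map $f_c^m: \hW \to \hV_c$ is a biholomorphism. This makes $F_c|_W$ univalent and supplies the extension $(f_c^m|_{\hW})^{-1}: \hV_c \to \hW$.

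To show $\hW \subseteq V_c$, I iterate the forward dynamics on $V_c$: $f_c(V_c) = P_{c,n}(-\beta(c))$, and $f_c$ carries each $P_{c,k}(\pm\beta(c))$ onto $P_{c,k-1}(\beta(c))$ for $k \ge 1$, so by induction $f_c^j(V_c) \subseteq P_{c,1}(-\beta(c)) \cup P_{c,1}(\beta(c))$ for $j = 1, \ldots, n$, which is disjoint from $V_c \subseteq P_{c,1}(0)$; hence $m \ge n + 1$. Therefore $\hW$ has puzzle depth $m + 4$ strictly exceeding the depth $n + 1$ of $V_c$, and since $W \subseteq \hW \cap V_c \ne \emptyset$ while puzzle pieces of differing depths are either nested or disjoint, $\hW \subseteq V_c$. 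Part~$1$ of Lemma~\ref{lem:puzzle pieces} combined with the identity $P_{c,5}(0) = f_c^{-1}(P_{c,4}(-\beta(c)))$ gives $\overline{P_{c,5}(0)} \subseteq \hV_c$, so Lemma~\ref{lem:hm} together with the compactness of $\overline{\cP_5(-2)}$ in $\cP_4(-2)$ yields a uniform modulus bound
\[
  A \= \inf_{c \in \overline{\cP_5(-2)}} \modulus\!\bigl(\hV_c \setminus \overline{P_{c,5}(0)}\bigr) > 0.
\]
Since $n \ge 5$ forces $\overline{V_c} \subseteq \overline{P_{c,5}(0)}$, Koebe Distortion Theorem applied to $(f_c^m|_{\hW})^{-1}$ produces a uniform distortion bound $\Delta_3$ for $F_c|_W$, and Schwarz--Pick applied to the extension (a holomorphic self-map of $\hV_c$ whose image is compactly contained in $\hV_c$) yields a uniform contraction factor $\lambda < 1$, whence $F_c$ is uniformly expanding for the hyperbolic metric on $\hV_c$. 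The main difficulty is securing $\hW \subseteq V_c$: the puzzle-refinement argument succeeds only once the postcritical lower bound $m \ge n + 1$ is in place.
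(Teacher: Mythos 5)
Your proof is correct and follows essentially the same strategy as the paper's: pull back $\hV_c$ univalently along the return branch (made possible because Lemma~\ref{l:off postcritical} keeps the postcritical orbit off $\hV_c$), obtain a uniform modulus bound from $\modulus(\hV_c \setminus \overline{P_{c,5}(0)})$ over $\cP_5(-2)$, and invoke Koebe for the distortion and Schwarz--Pick for the expansion. The one place where you take a longer detour is in establishing $\hW \subseteq V_c$: you prove the dynamical lower bound $m \ge n+1$ from the itinerary of $V_c$ through the rings $P_{c,k}(\pm\beta(c))$, and then compare depths. The paper's argument here is shorter and doesn't need that lower bound: $\hW$ is a puzzle piece on which $f_c^m$ is univalent, so $\hW$ cannot contain the critical point $0$; since $V_c$ does contain $0$ and the two puzzle pieces intersect (both contain $W$), nestedness forces $\hW \subsetneq V_c$. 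So your closing remark that the puzzle-refinement argument ``succeeds only once the postcritical lower bound $m \ge n+1$ is in place'' overstates the necessity of that step — the conclusion follows without it. Also, your route to constructing the univalent pull-back goes through Lemma~\ref{l:univalent pull-back property} applied at $f_c(z_0)$ and then pulls back one more step by hand; the paper simply notes that disjointness of $\hV_c$ from the forward critical orbit makes the full $m$-fold pull-back univalent directly. Both routes are valid.
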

\begin{proof}
Recall that for each parameter~$c$ in~$\cP_4(-2)$ the critical value~$c$ of~$f_c$ is in~$P_{c, 4}(- \beta(c))$ (part~$2$ of Lemma~\ref{lem:auxiliary para-puzzle pieces}), so set~$P_{c, 4}(0) = f_c^{-1}(P_{c, 3}(- \beta(c)))$  contains the closure of~$P_{c, 5}(0) = f_c^{-1}(P_{c, 4}(- \beta(c)))$ (\emph{cf}., part~$1$ of Lemma~\ref{lem:puzzle pieces}) and that these sets depend continuously with~$c$ on~$\cP_4(-2)$ (\emph{cf}., Lemma~\ref{lem:hm}).
Since~$\cP_4(-2)$ contains the closure of~$\cP_5(-2)$
(part~$1$ of Lemma~\ref{lem:auxiliary para-puzzle pieces}) we have
$$ A
\=
\inf_{c \in \cP_5(-2)} \modulus (P_{c, 4}(0) \setminus \cl{P_{c, 5}(0)})
>
0. $$
Let~$\Delta_3$ be the constant~$\Delta$ given by Koebe Distortion Theorem for this value of~$A$.

Since~$\hV_c$ is disjoint from the forward orbit of~$0$ (Lemma~\ref{l:off postcritical}), for each connected component~$W$ of~$D_c$ the
map~$f_c^{m_c(W)}$ maps a neighborhood~$\hW$ of~$W$ biholomorphically to~$\hV_c$.
By Koebe Distortion Theorem the distortion of~$f_c^{m_c(W)}$ on~$W$ is bounded by~$\Delta_3$.
Note that~$\hW$ is a puzzle piece intersecting the puzzle piece~$V_c$.
Thus, these puzzle pieces are either equal or one is strictly contained in the other.
Since $\hW$ does not contain~$0$, it follows that~$\hW$ is strictly contained in~$V_c$.
Thus~$\left( f_c^{m_c(W)}|_{\hW} \right)^{-1}$ is an extension of $F_c|_{W}^{-1}$ to~$\hV_c$ taking images in~$V_c$.
\end{proof}
\subsection{Pressure function of the induced map}
\label{ss:Bowen type formula}
Let~$n \ge 4$ be an integer and let~$c$ be a parameter in~$\cK_n$.
In this subsection we state a Bowen type formula relating~$P_c^{\R}$ (resp.~$P_c^{\C}$) to a certain~$2$ variables pressure of~$F_c$ (Proposition~\ref{p:Bowen type formula}) that is shown in~\S\ref{ss:proof of Bowen type formula}.

Denote by~$\fD_c$ the collection of connected components of~$D_c$ and by~$\fD_c^{\R}$ the sub-collection of~$\fD_c$ of those sets intersecting~$\R$.
For each~$W$ in~$\fD_c$ denote by~$\phi_W : \hV_c \to V_c$ the extension of~$F_c|_{W}^{-1}$ given by Lemma~\ref{l:bounded distortion to nice}.
Given an integer~$\ell \ge 1$ denote by~$E_{c, \ell}$ (resp. $E_{c, \ell}^{\R}$) the set of all words of length~$\ell$ in the alphabet~$\fD_c$ (resp. $\fD_c^{\R}$).
So, for each integer~$\ell \ge 1$ and each word~$W_1 \cdots W_\ell$  in~$E_{c, \ell}$ the composition
$$ \phi_{W_1 \cdots W_\ell} = \phi_{W_1} \circ \cdots \circ \phi_{W_\ell} $$
is defined on~$\hV_c$.
Put
$$ m_c(W_1 \cdots W_\ell) = m_c(W_1) + \cdots + m_c(W_\ell). $$

For~$t, p$ in~$\R$ and an integer~$\ell \ge 1$ put
$$ Z_{c, \ell}^{\R}(t, p)
\=
\sum_{\underline{W} \in E_{c, \ell}^{\R}} \exp(-m_c(\underline{W}) p) \left(
\sup \{ |D\phi_{\underline{W}}(z) | \mid z \in V_c \} \right)^t $$
and
$$ Z_{c, \ell}^{\C}(t, p)
\=
\sum_{\underline{W} \in E_{c, \ell}} \exp(-m_c(\underline{W}) p) \left( \sup \{
|D\phi_{\underline{W}}(z) | \mid z \in V_c \} \right)^t. $$
For a fixed~$t$ and~$p$ in~$\R$ the sequence
$$ \left( \frac{1}{\ell} \log Z_{c, \ell}^{\R}(t, p) \right)_{\ell = 1}^{+ \infty}
\left( \text{resp. } \left(\frac{1}{\ell}  \log Z_{c, \ell}^{\C}(t, p) \right)_{\ell = 1}^{+ \infty} \right) $$
converges to the pressure function of~$F_c|_{D_c \cap \R}$ (resp.~$F_c$) for the potential~$- t \log |DF_c| - p m_c$; denote it by~$\sP_c^{\R}(t, p)$ (resp. $\sP_c^{\C}(t, p)$).
On the set where it is finite, the function~$\sP_c^{\R}$ (resp.~$\sP_c^{\C}$) so
defined is strictly decreasing in each of its variables.

\begin{propalph}
\label{p:Bowen type formula}
There is~$n_4 \ge 4$ such that for every integer~$n \ge n_4$ and every parameter~$c$ in~$\cK_n$, we have for each~$t \ge 3$
$$ P_c^{\R}(t)
=
\inf \left\{ p \mid \sP_c^{\R}(t, p) \le 0 \right\}
 \left( \text{resp. } P_c^{\C}(t)
=
\inf \left\{ p \mid \sP_c^{\C}(t, p) \le 0 \right\} \right). $$
\end{propalph}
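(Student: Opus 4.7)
The plan is to prove the Bowen-type identity by expressing each geometric pressure as a Poincar\'e series over the tree of backward orbits and then regrouping this series through the induced map $F_c$ and the first landing map $L_c$ studied in Section~\ref{s:landing derivatives}. First I fix a generic base point $x \in V_c$ (real, in the real case) and appeal to the Poincar\'e series characterization of the geometric pressure available in this setting to obtain
$$P_c^{\C}(t) = \inf\Bigl\{ p \in \R \,\Big|\, \sum_{m \ge 1} e^{-mp} \sum_{y \in f_c^{-m}(x)} |Df_c^m(y)|^{-t} < + \infty \Bigr\},$$
with the obvious real analogue obtained by summing only over real preimages.

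Every preimage $y \in f_c^{-m}(x)$ decomposes uniquely as a triple $(k, z, \underline{W})$: here $k = k(y) \in \{0, \ldots, m\}$ is the smallest index with $f_c^k(y) \in V_c$ (it exists because $f_c^m(y) = x \in V_c$), $z = f_c^k(y) \in V_c$, and $\underline{W} \in E_{c, \ell}$ is the unique word with $z = \phi_{\underline{W}}(x)$ and $m_c(\underline{W}) = m - k$; the case $k = 0$ corresponds to $y = z$. Factoring $|Df_c^m(y)|^{-t} = |DL_c(y)|^{-t} \cdot |D\phi_{\underline{W}}(x)|^t$ (with the convention $|DL_c(y)| = 1$ when $k = 0$) and using the uniform bounded distortion of $\phi_{\underline{W}}$ on $\hV_c$ provided by Lemma~\ref{l:bounded distortion to nice} to replace $|D\phi_{\underline{W}}(x)|^t$ by $\sup_{V_c} |D\phi_{\underline{W}}|^t$ up to multiplicative constants, the Poincar\'e series factorizes, up to such constants, as
$$\Bigl(\sum_{\ell \ge 0} Z_{c, \ell}^{\C}(t, p)\Bigr) \cdot \Bigl(1 + \sum_{k \ge 1} e^{-kp} \sup_{z \in V_c} S_k(z, t)\Bigr),$$
where $S_k(z, t) \= \sum_{y \in L_c^{-1}(z),\, m_c(y) = k} |DL_c(y)|^{-t}$. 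The real version is obtained by restricting all sums to $E_{c, \ell}^{\R}$ and to real preimages throughout.

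The crux is to bound the landing factor uniformly in $z \in V_c$ for $p$ near $p^*(t) \= \inf\{p \mid \sP_c^{\C}(t, p) \le 0\}$. Since $\deg f_c = 2$, there are at most $2^k$ landing preimages of $z$ with $m_c(y) = k$, and Proposition~\ref{p:landing derivatives} gives $|DL_c(y)|^{-t} \le C_1^t\, 2^{-k t (1 - \varepsilon)}$, so the landing factor is majorized by a geometric series in $k$ of ratio $2 e^{-p} 2^{-t(1-\varepsilon)}$ and converges as soon as $p > (1 - t(1-\varepsilon)) \log 2$. Taking $n_4$ large enough so that $\cK_n$ lies in a small neighborhood of $c = -2$, and combining this with the lower bound on the geometric pressure furnished by Proposition~\ref{p:critical line} in Section~\ref{ss:critical line}, I would verify that for $t \ge 3$ the threshold $p^*(t)$ lies strictly above $(1 - t(1-\varepsilon)) \log 2$, so that the landing factor stays uniformly bounded on a right-neighborhood of $p^*(t)$. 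This compatibility between the landing estimate and the lower bound on pressure is the main obstacle and is precisely where the hypothesis $t \ge 3$ enters.

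The two inequalities then follow cleanly. By subadditivity of $\ell \mapsto \log Z_{c, \ell}^{\C}(t, p)$, the series $\sum_\ell Z_{c, \ell}^{\C}(t, p)$ converges iff $\sP_c^{\C}(t, p) < 0$, i.e., iff $p > p^*(t)$. For such $p$, convergence together with the bounded landing factor forces the Poincar\'e series to converge, whence $P_c^{\C}(t) \le p$, giving $P_c^{\C}(t) \le p^*(t)$. Conversely, for $p < p^*(t)$ one has $\sum_\ell Z_{c, \ell}^{\C}(t, p) = + \infty$, and restricting the decomposition above to the summands with $k = 0$ (i.e., $y = z \in V_c$) already yields, by bounded distortion, a divergent lower bound on the Poincar\'e series, so $P_c^{\C}(t) \ge p$ and hence $P_c^{\C}(t) \ge p^*(t)$. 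The real case is proved in exact parallel.
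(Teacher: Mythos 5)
Your overall strategy matches the paper's: expand the Poincar\'e series over backward orbits, split each preimage into a landing piece (controlled by Proposition~\ref{p:landing derivatives}) and a return piece (controlled by the bounded distortion of~$F_c$ from Lemma~\ref{l:bounded distortion to nice}), and then compare the resulting two series. The easy inequality $P_c^{\C}(t) \ge p^*(t)$ is fine, as is the choice of a generic base point in~$V_c$ (the paper uses the critical point~$0$, which lies in~$V_c$, and cites the limsup characterization of the pressure via preimage sums).

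The gap is in the step where you ``would verify that for $t\ge 3$ the threshold $p^*(t)$ lies strictly above $(1-t(1-\varepsilon))\log 2$'' by invoking Proposition~\ref{p:critical line}. That proposition lower-bounds $P_c^{\C}(t)$, not $p^*(t)$, and at this stage of the proof the two are not yet known to coincide; the only relation you have so far is $P_c^{\C}(t)\ge p^*(t)$, which goes the wrong way. As written, you use this unverified bound to justify the uniform boundedness of the landing factor ``on a right-neighborhood of $p^*(t)$'' and then let $p\downarrow p^*(t)$ to get $P_c^{\C}(t)\le p^*(t)$; without the bound that limiting step does not hold. The paper sidesteps this by running the comparison for $p$ slightly below $P_c^{\C}(t)$ (specifically $p\ge P_c^{\C}(t)-\tfrac{t}{10}\log 2$), which by Proposition~\ref{p:critical line} automatically lies above the landing threshold, and then obtaining a contradiction if $p^*(t)<P_c^{\C}(t)$. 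Your argument can be repaired either by adopting that order, or by the roundabout route: for every $p>\max\{p^*(t),(1-t(1-\varepsilon))\log 2\}$ the Poincar\'e series converges, hence $P_c^{\C}(t)\le\max\{p^*(t),(1-t(1-\varepsilon))\log 2\}$, and since $P_c^{\C}(t)\ge -t\chicrit/2>(1-t(1-\varepsilon))\log 2$ for $t\ge 3$ (using $\chicrit\le(1+\varepsilon)\log2$ from Lemma~\ref{l:quasi tent estimate} and $\varepsilon$ small), the maximum must be $p^*(t)$. Either fix leaves the proof essentially the same as the paper's, but the version you wrote is circular as stated.
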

The proof of this proposition is given in~\S\ref{ss:proof of Bowen type formula}, after we give a lower bound on the pressure function in the next subsection.
\subsection{Critical line}
\label{ss:critical line}
The purpose of this subsection is to prove the following proposition.
\begin{prop}
\label{p:critical line}
For every integer~$n \ge 5$ and every parameter~$c$ in~$\cK_n$ we have
$$ \chiinfR
\=
\inf \left\{ \int \log |Df_c| \ d\mu \mid \mu \in \sM_c^{\R} \right\}
\le \chicrit/2. $$
In particular, for each~$t > 0$ we have
$$ P_c^{\C}(t) \ge P^{\R}_c(t) \ge - t \chicrit/2. $$
\end{prop}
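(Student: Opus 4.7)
The strategy is to exhibit, for each large $k$, a real periodic point $p_k$ of $f_c$ whose Lyapunov exponent $\chi(p_k) \= \frac{1}{m_k} \log |Df_c^{m_k}(p_k)|$ (with $m_k$ the period or a multiple thereof) satisfies $\liminf_{k \to +\infty} \chi(p_k) = \chicrit/2$.  The uniform measure $\mu_k$ on the orbit of $p_k$ lies in $\sM_c^{\R}$ and satisfies $\int \log |Df_c| \, d\mu_k = \chi(p_k)$, so this yields $\chiinfR \le \chicrit/2$.  The ``in particular'' statement then follows from the variational principle: for each $t > 0$ and each $k$,
$$
P_c^{\R}(t) \ \ge \ h_{\mu_k}(f_c) - t \chi(p_k) \ = \ - t \chi(p_k),
$$
and passing to a subsequence with $\chi(p_{k_j}) \to \chicrit/2$ gives $P_c^{\R}(t) \ge - t \chicrit/2$; the inclusion $I_c \subset J_c$ gives $\sM_c^{\R} \subset \sM_c^{\C}$, hence $P_c^{\C}(t) \ge P_c^{\R}(t)$.

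For the construction, set $m_k \= n + 3k + 1$, so that $f_c^{m_k - 1}(c) = f_c^{n + 3k}(c)$ lies in $\Lambda_c \subset P_{c, 1}(0)$.  Since the postcritical orbit is disjoint from $P_{c, 4}(0)$ by Lemma~\ref{l:off postcritical}, the map $f_c^{m_k - 1}$ restricts to a biholomorphism from the puzzle piece $P_{c, m_k}(c)$ onto $P_{c, 1}(0)$; let $w_k \in P_{c, m_k}(c)$ be the preimage of $0$ under this biholomorphism and put $z_k \= \sqrt{w_k - c}$, so that $f_c^{m_k}(z_k) = 0$.  The same disjointness forces the connected component $W_k$ of $f_c^{-m_k}(P_{c, 4}(0))$ containing $z_k$, together with all its forward iterates $f_c^j(W_k)$ for $0 \le j < m_k$, to avoid $0$; otherwise some $f_c^{\ell}(0) = f_c^{\ell - 1}(c)$ with $1 \le \ell \le m_k$ would lie in $P_{c, 4}(0)$, contradicting Lemma~\ref{l:off postcritical}.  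Consequently $f_c^{m_k} : W_k \to P_{c, 4}(0)$ is a biholomorphism.  Puzzle-piece nesting places $W_k \subset P_{c, m_k + 1}(0) \subset P_{c, 4}(0)$, so the inverse branch is a strict hyperbolic contraction on $P_{c, 4}(0)$ and yields a unique fixed point $p_k \in W_k$, which is real by symmetry and therefore lies in $I_c$.

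To estimate $\chi(p_k)$, apply Lemma~\ref{l:distortion to central 0} to the biholomorphism $f_c^{m_k - 1} : P_{c, m_k}(c) \to P_{c, 1}(0)$ to bound its distortion by $\Delta_2$.  This yields $|Df_c^{m_k - 1}(f_c(p_k))| \asymp |Df_c^{m_k - 1}(c)|$, and since $f_c^{m_k - 1}(f_c(p_k)) = p_k$, the same distortion bound gives
$$
|p_k - f_c^{m_k - 1}(c)| \ \asymp \ |Df_c^{m_k - 1}(c)| \cdot |f_c(p_k) - c|.
$$
Combined with $|f_c(p_k) - c| = |p_k|^2$, this becomes
$$
|p_k|^2 \ \asymp \ \frac{|p_k - f_c^{m_k - 1}(c)|}{|Df_c^{m_k - 1}(c)|}.
$$
Because $p_k \in P_{c, 4}(0)$ and $f_c^{m_k - 1}(c) \in \Lambda_c$, and because $\Lambda_c$ and $\cl{P_{c, 4}(0)}$ are disjoint compact subsets of $P_{c, 1}(0)$, the numerator is bounded above and below by absolute constants.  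Thus $|p_k| \asymp |Df_c^{m_k - 1}(c)|^{-1/2}$ and
$$
|Df_c^{m_k}(p_k)| \ = \ 2 |p_k| \cdot |Df_c^{m_k - 1}(f_c(p_k))| \ \asymp \ |Df_c^{m_k - 1}(c)|^{1/2},
$$
so $\chi(p_k) = \frac{1}{2 m_k} \log |Df_c^{m_k - 1}(c)| + O(1/m_k)$.

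To finish, note that by Lemma~\ref{l:off postcritical} the postcritical orbit is contained in $P_{c, 1}(-\beta(c)) \cup \Lambda_c \cup P_{c, 1}(\beta(c))$, on which $|Df_c(z)| = 2 |z|$ is bounded above and away from zero (using that $\Lambda_c$ is bounded away from $0$).  Consequently the sequence $a_m \= \frac{1}{m} \log |Df_c^m(c)|$ has increments $a_{m + 1} - a_m = O(1/m)$, and a standard shifting argument shows $\liminf_k a_{n + 3k} = \liminf_m a_m = \chicrit$.  Hence $\liminf_k \chi(p_k) = \chicrit/2$, proving the proposition.  The main technical point is the two-sided estimate $|p_k| \asymp |Df_c^{m_k - 1}(c)|^{-1/2}$, whose lower bound relies crucially on the positive separation of $\Lambda_c$ from $\cl{P_{c, 4}(0)}$; without this separation one would only obtain $\chi(p_k) \gtrsim \chicrit/2$, which is insufficient.
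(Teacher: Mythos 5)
Your proposal is correct and achieves the same goal (exhibiting periodic points with Lyapunov exponent approaching $\chicrit/2$), but via a different construction than the paper. The paper invokes Lemma~\ref{l:critical line}, which builds a component $W_k$ of the domain $D_c$ of the first return map $F_c$ to $V_c = P_{c, n+1}(0)$ with return time $m_c(W_k) = n + 3k + 3$ and $\sup_{W_k}|DF_c| \le C_4 |Df_c^{n+3k}(c)|^{1/2}$; since $F_c|_{W_k}$ maps $W_k \subset V_c$ onto $V_c$ and intersects $\R$, a real periodic point $p_k$ of period $n+3k+3$ appears in $\cl{W_k \cap \R}$, and the desired bound on $\int \log|Df_c|\,d\mu_k$ follows. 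You instead pull back $P_{c,4}(0)$ along the branch of $f_c^{-m_k}$ (with $m_k = n+3k+1$) through the preimage $z_k$ of $0$, use nesting to get a contracting self-map, and extract a fixed point $p_k$. Both routes deliver the same $|Df_c^{m_k}(p_k)| \asymp |Df_c^{n+3k}(c)|^{1/2}$ asymptotics. One genuine improvement in your write-up: you explicitly note that $\liminf_k a_{n+3k} = \liminf_m a_m = \chicrit$ via the $O(1/m)$ increment bound (valid because the postcritical orbit stays in a region where $\log|Df_c|$ is bounded); the paper passes over this point silently.

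Two small inaccuracies to flag. First, the claim that $f_c^{m_k-1}$ maps $P_{c,m_k}(c)$ biholomorphically onto $P_{c,1}(0)$ is true, but the justification you give (the postcritical orbit avoids $P_{c,4}(0)$) does not directly cover the last few iterates, where the intermediate puzzle pieces $f_c^j(P_{c,m_k}(c))$ have depth $2$ or $3$. For those times one must use that the corresponding postcritical points lie in $V_{c,2}$ or $\tV_{c,1}$, which sit inside $P_{c,1}(\pm\beta(c))$ and hence are disjoint from $P_{c,1}(0)$ (so the puzzle pieces cannot contain $0$). Alternatively, one can just observe, as in the proof of Lemma~\ref{l:level k contribution}, that $f_c^n$ maps $V_{c,n} = P_{c,n+1}(c)$ biholomorphically onto $P_{c,1}(0)$ and that the further iterates by $f_c^{3k} = g_c^k$ stay in the Markov structure of $\Lambda_c$. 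Second, your closing remark has the logic reversed. The proposition is an upper bound on $\chiinfR$, and thus only requires an \emph{upper} bound on $\chi(p_k)$, which comes from the \emph{upper} bound $|p_k - f_c^{m_k-1}(c)| \le \diam P_{c,1}(0)$ — not from the separation between $\Lambda_c$ and $\cl{P_{c,4}(0)}$. The separation yields the lower bound on $|p_k|$ and hence on $\chi(p_k)$, which is a nice extra but not needed here; indeed the paper's Lemma~\ref{l:critical line} only states the one-sided upper bound $\sup_{W}|DF_c| \le C_4 |Df_c^{n+3k}(c)|^{1/2}$, which suffices.
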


The proof of this proposition is given after the following lemma.

\begin{lemm}
\label{l:critical line}
There is a constant~$C_4 > 0$ such that for each integer~$n \ge 5$ and each parameter~$c$ in~$\cK_n$, the following property holds: For every integer~$k \ge 0$ there is a connected component~$W$ of~$D_c$ contained in~$P_{c, n + 3k + 1}(0)$, that intersects~$\R$ and such that~$m_c(W) = n + 3k + 3$ and
$$ \sup_{z \in W} |DF_c(z)|
\le
C_4 |Df_c^{n + 3k} (c)|^{1 / 2}. $$
\end{lemm}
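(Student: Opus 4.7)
The plan is to construct $W$ as a univalent pullback of $V_c$ of order $n+3k+3$ inside $P_{c, n+3k+1}(0)$. The key structural observation is that within $P_{c, n+3k}(c)$ there are exactly two depth-$(n+3k+1)$ puzzle pieces: $V^* := P_{c, n+3k+1}(c)$ (containing the critical value $c$) and a second piece $V^{**}$ not containing $c$. For $k \ge 1$ one has $V^{**} = f_c^{-(n+3k-1)}(P_{c, 2}(\beta(c)))$, and for $k = 0$ one has $V^{**} = P_{c, n+1}(-\beta(c))$. In both cases $f_c^{n+3k-1}$ maps $V^{**}$ biholomorphically onto $P_{c, 2}(\beta(c))$, since the intermediate pullback pieces along the postcritical orbit avoid $0$: at times $j = n + 3i$ with $i < k$ the pullback piece has depth at least $4$ and lies in $Y_c \cup \tY_c$ by Lemma~\ref{lem:Markov partition}, while at all other times it lies in $P_{c, 1}(\pm\beta(c))$, $V_{c, 2}$, or $\tV_{c, 1}$. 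Since $c \notin V^{**}$, the preimage $f_c^{-1}(V^{**}) \cap P_{c, n+3k+1}(0)$ consists of two depth-$(n+3k+2)$ puzzle pieces $Q_1, Q_2$ interchanged by $z \mapsto -z$, and $f_c^{n+3k+1} : Q_i \to P_{c, 1}(\beta(c))$ is biholomorphic.

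Now $f_c^2$ on $P_{c, 1}(\beta(c))$ is a degree-$2$ branched cover with critical value $c \notin V_c$, so $(f_c^{n+3k+3})^{-1}(V_c) \cap Q_i$ consists of two puzzle pieces of depth $2n+3k+4$, each mapped biholomorphically onto $V_c$ by $f_c^{n+3k+3}$. Define $W$ to be one of these four pieces that intersects $\R$ (which exists for real $c$ by the real symmetry of the construction). An itinerary check, tracing $f_c^j(W) \subset f_c^{j-1}(V^{**})$, shows that for each $j \in \{1, \ldots, n+3k+2\}$ the set $f_c^j(W)$ lies in one of $P_{c, 1}(\pm\beta(c))$, $Y_c \cup \tY_c$, $V_{c, 2}$, $\tV_{c, 1}$, or $P_{c, 2}(\beta(c))$, all of which are disjoint from $V_c$ (using $V_c \subset P_{c, 4}(0)$ and $\Lambda_c \cap P_{c, 4}(0) = \emptyset$). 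Combined with $f_c^{n+3k+3}(W) = V_c$, this gives $m_c(W) = n+3k+3$ and shows that $W$ is a connected component of $D_c$ with the required nesting $W \subset Q_i \subset P_{c, n+3k+1}(0)$.

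For the derivative bound, I apply Koebe to the biholomorphism $f_c^{n+3k-1} : P_{c, n+3k}(c) \to P_{c, 1}(\beta(c))$, whose target is surrounded by the annulus $P_{c, 0}(\beta(c)) \setminus \cl{P_{c, 1}(\beta(c))}$ of modulus bounded below uniformly over $c \in \cK_n$ by compactness. This yields, for $z \in W$, both the derivative comparison $|Df_c^{n+3k-1}(f_c(z))| \asymp |Df_c^{n+3k-1}(c)| \asymp |Df_c^{n+3k}(c)|$ and the diameter estimate $\diam(V^{**}) \asymp |Df_c^{n+3k}(c)|^{-1}$ with $\dist(V^{**}, c) \lesssim |Df_c^{n+3k}(c)|^{-1}$. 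From $f_c(z) \in V^{**}$ and $z^2 = f_c(z) - c$ we obtain $|z| \lesssim |Df_c^{n+3k}(c)|^{-1/2}$, hence $|Df_c(z)| = 2|z| \lesssim |Df_c^{n+3k}(c)|^{-1/2}$. Combining with $|Df_c^{n+3k+2}(f_c(z))| \lesssim |Df_c^{n+3k}(c)|$, which follows from the Koebe estimate and the boundedness of $|Df_c^3|$ on $P_{c, 2}(\beta(c))$, we obtain $|DF_c(z)| = |Df_c(z)| \cdot |Df_c^{n+3k+2}(f_c(z))| \le C_4 |Df_c^{n+3k}(c)|^{1/2}$ with $C_4$ uniform. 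The main obstacle is the careful verification of the biholomorphicity of $f_c^{n+3k-1}$ on $V^{**}$ and the correct identification of the orbit pieces along this pullback; both hinge on the combinatorial structure of the postcritical orbit and on the hypothesis $n \ge 5$, which ensures $V_c$ is nested deeply enough inside $P_{c, 4}(0)$ to be disjoint from $\Lambda_c$.
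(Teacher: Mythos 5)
Your construction takes a genuinely different route from the paper's, which pulls back $V_c$ inside $P_{c,n+3k+2}(0)$ via $f_c^{n+3k+1}$ into $P_{c,1}(0)$, through the pieces $X_c$ or $\tX_c$ near $\gamma(c),\tgamma(c)$, and then estimates the derivative via $f_c(W)\subset P_{c,n+3k+1}(c)$ and the Koebe space $\hW_c$ supplied by Lemma~\ref{l:distortion to central 0}. Your $W$ instead lives in $P_{c,n+3k+1}(0)\setminus P_{c,n+3k+2}(0)$. Unfortunately the combinatorial premise of your construction fails for $k\ge 1$. The map $f_c^{n+3k-3}$ sends $P_{c,n+3k}(c)$ biholomorphically onto $P_{c,3}(0)$ (each intermediate image has depth $\ge 4$ and contains a postcritical point, so avoids $0$ by Lemma~\ref{l:off postcritical}; at the last step $f_c^{n+3(k-1)}(c)\in\Lambda_c\subset P_{c,3}(0)$). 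Since $P_{c,3}(0)$ contains $0$, $f_c^{n+3k-1}$ has degree $2$ on $P_{c,n+3k}(c)$, and there are \emph{three} depth-$(n+3k+1)$ puzzle pieces inside $P_{c,n+3k}(c)$, pulled back from the three depth-$4$ pieces of $P_{c,3}(0)$: namely $P_{c,4}(0)$, $Y_c$, and $\tY_c$. Your $V^{**}$ (the one mapping to $P_{c,2}(\beta(c))$) corresponds to $P_{c,4}(0)$, so $f_c^{n+3(k-1)}(V^{**})=P_{c,4}(0)\ni 0$, contradicting your itinerary claim that this image lies in $Y_c\cup\tY_c$. Consequently $f_c^{n+3k-1}\colon V^{**}\to P_{c,2}(\beta(c))$ has degree $2$, not $1$, and the subsequent steps of the construction do not go through as written.

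Even leaving that aside, the Koebe application at the heart of your derivative estimate is invalid. You want to treat $P_{c,0}(\beta(c))\setminus\cl{P_{c,1}(\beta(c))}$ as a Koebe collar for the inverse branch of $f_c^{n+3k-1}$ along the postcritical orbit, but that requires pulling back $P_{c,0}(\beta(c))$ univalently along this branch. For $k\ge 1$ the very first pullback is the component of $f_c^{-1}(P_{c,0}(\beta(c)))$ containing $f_c^{n+3k-2}(c)\in V_{c,2}$, which is $P_{c,1}(-\beta(c))$; this contains the critical value $c$ (part~$2$ of Lemma~\ref{lem:auxiliary para-puzzle pieces}), so the pullback is not univalent and Koebe does not apply. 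This is exactly the difficulty that the pair $(\hW_c, P_{c,1}(0))$ of Lemma~\ref{l:pleasant couple} is engineered to avoid (via Lemma~\ref{l:off postcritical}), and it is why the paper routes $f_c(W)$ through $P_{c,n+3k+1}(c)$ rather than through $P_{c,n+3k}(c)$: Lemma~\ref{l:distortion to central 0} then gives the required distortion bound directly. A repair of your approach would at minimum require replacing the Koebe step by the estimate $\diam(P_{c,n+3k}(c))\lesssim |Df_c^{n+3k}(c)|^{-1}$, obtained for $k\ge 1$ by applying Lemma~\ref{l:distortion to central 0} to $f_c^{n+3(k-1)}\colon P_{c,n+3(k-1)+1}(c)\to P_{c,1}(0)$ and using that $|Dg_c|$ is uniformly bounded on $\Lambda_c$, and for $k=0$ by Lemma~\ref{l:landing to central derivative}.
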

\begin{proof}
Let~$\Delta_2 > 1$ and~$\Delta_3 > 1$ be the constants given by 
Lemmas~\ref{l:distortion to central 0} 
and~\ref{l:bounded distortion to nice}, respectively.
Since the set~$P_{c, 1}(0)$ depends continuously with~$c$ on~$\cP_0(-2)$ 
(\emph{cf}., Lemma~\ref{lem:hm}) and since this last set contains the closure
of~$\cP_4(-2)$ (part~$1$ of Lemma~\ref{lem:auxiliary para-puzzle pieces}), we have
$$ \Xi_0 \= \sup_{c \in \cP_4(-2)} \diam(P_{c, 1}(0))
<
+ \infty $$
and
$$ \Xi_1 \= \sup_{c \in \cP_4(-2)} \sup_{z \in P_{c, 1}(0)} |Df_c^2 (z)|
<
+ \infty. $$

Fix an integer~$n \ge 5$, a parameter~$c$ in~$\cK_n$, and an integer~$k \ge 0$.
Then the parameter~$c$ is real, so~$\alpha(c)$ and~$\talpha(c)$ are both real (\S\ref{ss:landing to central}) and the set~$\Lambda_c$ is contained in the interval
$$ P_{c, 3}(0) \cap \R = (\gamma(c), \tgamma(c)), $$
see~\S\ref{ss:expanding Cantor set}.
On the other hand, the point~$\alpha_1(c)$ is real (\S\ref{ss:landing to central}) and~$c$ is in~$P_{c, n}(- \beta(c))$ (part~$2$ of Lemma~\ref{lem:auxiliary para-puzzle pieces}), so~$c < \alpha_1(c)$.
Note moreover that~$V_{c, 1}$ is invariant by complex conjugation and that~$V_{c, 1} \cap \R = (\alpha_1(c), \alpha(c))$ (\S\ref{ss:landing to central}).
It follows that~$f_c^{-1}(V_{c, 1})$ is the disjoint union of~$2$ puzzle pieces~$X_c$ and~$\tX_c$, such that
$$ X_c \cap \R = (\alpha(c), \gamma(c))
\text{ and }
\tX_c \cap \R = (\tgamma(c), \talpha(c)). $$
Moreover, each of the puzzle pieces~$X_c$ and~$\tX_c$ is a connected component of~$D_c'$ and~$m_c(X_c) = m_c(\tX_c) = 2$.

Since~$f_c^{n + 3k + 1}$ maps~$P_{c, n + 3k + 2}(0)$ properly onto~$P_{c, 1}(0)$, it follows that~$f_c^{n + 3k + 1}$ maps the end points of the interval~$P_{c, n + 3k + 2}(0) \cap \R$ into~$\partial P_{c, 1}(0) \cap \R = \{ \alpha(c), \talpha(c) \}$.
Since~$f_c^{n + 3k + 1}(0)$ is in~$\Lambda_c \subset Y_c \cup \tY_c$, 
it follows that the interval~$f_c^{n + 3k + 1}(P_{c, n + 3k + 2}(0) \cap \R)$
contains either~$X_c \cap \R$ or~$\tX_c \cap \R$.
This proves that there is a connected component~$W$ of~$D_c$ contained 
in~$P_{n + 3k + 2}(0)$, that intersects~$\R$ and such that~$m_c(W) = n + 3k +
3$. Let~$z_W$ be the unique point in~$W$ such that~$f_c^{n + 3k + 3}(z_W) = 0$.
Then~$f_c^{n + 3k + 1}(z_W)$ belongs to~$P_{c, 1}(0)$, so by definition
of~$\Xi_0$ we have
\begin{equation}
\label{e:image distance estimate}
|f_c^{n + 3k + 1}(z_W) - f_c^{n + 3k}(c)|
\le
\diam(P_{c, 1}(0))
\le
\Xi_0.
\end{equation}
Since~$f_c^n$ maps~$V_{c, n} = P_{c, n + 1}(c)$ biholomorphically to~$P_{c, 1}(0)$ and~$f_c^n(c) \in \Lambda_c$, it follows that~$f_c^{n + 3k}$ maps~$P_{c, n + 3k
+ 1}(c)$ biholomorphically to~$P_{c, 1}(0)$; so the distortion of~$f_c^{n + 3k}$
on~$P_{c, n + 3k + 1}(c)$ is bounded by~$\Delta_2$ (Lemma~\ref{l:distortion to central 0}) and for each point~$y$ in~$P_{c, n + 3k + 1}(c)$ we have
\begin{equation}
\label{e:binding estimate}
\Delta_2^{-1} |Df_c^{n + 3k} (c)|
\le
|Df_c^{n + 3k} (y)|
\le
\Delta_2 |Df_c^{n + 3k} (c)|.
\end{equation}
Together with~\eqref{e:image distance estimate} this implies that,
$$ |f_c(z_W) - c|
\le
\Delta_2 \Xi_0 |Df_c^{n + 3k} (c)|^{-1} $$
and therefore that,
$$ |Df_c(z_W)|
\le
2 \Delta_2^{1/2} \Xi_0^{1/2} |Df_c^{n + 3k} (c)|^{-1/2}. $$
Combined with~\eqref{e:binding estimate} with~$y = f_c(z_W)$, this implies
\begin{equation*}
\label{e:Julia's estimate}
|Df_c^{n + 3k + 1} (z_W)|
\le
2 \Delta_2^{3/2} \Xi_0^{1/2} |Df_c^{n + 3k} (c)|^{1/2}.
\end{equation*}
Putting~$C_4 \= 2 \Delta_3 \Xi_1 \Delta_2^{3/2} \Xi_0^{1/2}$, we get by Lemma~\ref{l:bounded distortion to nice}
\begin{align*}
\sup_{z \in W} |DF_c(z)|
& \le
\Delta_3  |Df_c^{n + 3k + 3} (z_W)|
\\ & \le
\Delta_3 \Xi_1 |Df_c^{n + 3k + 1} (z_W)|
\\ & \le
C_4 |Df_c^{n + 3k} (c)|^{1/2}.
\end{align*}
\end{proof}
\begin{proof}[Proof of Proposition~\ref{p:critical line}]
Let~$C_4$ be given by Lemma~\ref{l:critical line} and for each integer~$k \ge 0$ let~$W_k$ be the element~$W$ of~$\fD_c$ given by the same lemma.
Since~$W_k$ intersects~$\R$ and~$F_c|_{W_k} = f_c^{n + 3k + 3}|_{W_k}$ maps~$W_k$ biholomorphically to~$V_c$, we have~$f_c^{n + 3k + 3}(W_k \cap \R) = V_c \cap \R$.
On the other hand, since~$W_k \subset V_c$, there is a periodic point~$p_k$ of~$f_c$ of period~$n + 3k + 3$ in the closure of~$W_k \cap \R$.
Denoting by~$\mu_k$ the invariant probability measure supported on the orbit of~$p_k$, we have by Lemma~\ref{l:critical line} that for each~$t > 0$
\begin{align*}
\chiinfR
& \le
\int \log |Df_c| \ d\mu_k
\\ & =
\frac{1}{n + 3k + 3} \log |Df_c^{n + 3k + 3} (p_k)|
\\ & \le
\frac{1}{n + 3k + 3} (\log C_4 + \log |Df_c^{n + 3k} (c)|^{1/2}).
\end{align*}
We obtain the desired inequality by letting~$k \to + \infty$.
\end{proof}

\subsection{Proof of Proposition~\ref{p:Bowen type formula}}
\label{ss:proof of Bowen type formula}
For future reference, the following lemma is stated in a stronger form than what is needed for this paper.
\begin{lemm}
\label{l:landing contribution}
There are~$n_5 \ge 4$ and~$C_5 > 1$ such that for every integer~$n \ge n_5$ and every parameter~$c$ in~$\cK_n$ the following property holds: For each~$t \ge 3$, $p \ge - t \chicrit /2 - \tfrac{1}{10} \log 2$, and~$y$ in~$V_c$, we have
$$ L_{t, p}(y)
\=
1 + \sum_{z \in L_c^{-1}(y)} \exp(- m_c(z) p) |DL_c (z)|^{-t}
\le
C_5^t. $$
Moreover, for every integer $\wtm\ge 1$, we have
$$ \sum_{\substack{z \in L_c^{-1}(y), \\ m_c(z) \ge \wtm}} \exp(- m_c(z) p)
|DL_c (z)|^{-t}
\le
C_5^t 2^{-\frac{t}{30}\wtm}. $$
\end{lemm}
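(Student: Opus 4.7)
The plan is to estimate each summand $\exp(-m p)|DL_c(z)|^{-t}$ via Proposition~\ref{p:landing derivatives} (taken with $\varepsilon = 1/30$), to count the preimages of~$y$ under~$f_c^m$ trivially by~$2^m$ (as $f_c$ has degree~$2$), and to sum geometrically in the first landing time $m := m_c(z)$. The constants are tuned so that the required decay rate $2^{-mt/30}$ is attained at $t = 3$, which is the tightest case.

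The key preliminary is a uniform bound $\chicrit \le \bigl(\tfrac{17}{15} - \gamma_0\bigr)\log 2$ for some fixed $\gamma_0 > 0$, valid for every $c \in \cK_n$ whenever $n \ge n_5$. For such $c$, the critical orbit enters the uniformly expanding Cantor set~$\Lambda_c$ at time $n$, and bounded distortion of $g_c = f_c^3$ on~$\Lambda_c$ (\S\ref{ss:expanding Cantor set}) gives $\chicrit \le \tfrac{1}{3}\log\max\{|Dg_c(p(c))|, |Dg_c(\wtp(c))|\}$. Both fixed-point derivatives equal $2^3 = 8$ at $c = -2$ (via the tent-map semiconjugacy, which gives period-$3$ orbits of derivative $2^3$) and depend continuously on $c$; since Proposition~\ref{p:ps} forces $c \to -2$ as $n \to \infty$ within $\cK_n$, the bound holds for $n_5$ sufficiently large.

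Taking $\varepsilon = 1/30$ with resulting constants $n_2$ (and $n_5 \ge n_2$) and $C_1$, Proposition~\ref{p:landing derivatives} gives $|DL_c(z)|^{-t} \le C_1^t \cdot 2^{-29 m t/30}$, while the hypothesis on $p$ gives $\exp(-m p) \le \exp(m t \chicrit/2) \cdot 2^{m/10}$. Combining these with the trivial preimage count yields, for each level $m$,
\[
\sum_{\substack{z \in L_c^{-1}(y) \\ m_c(z) = m}} \exp(-m p)\, |DL_c(z)|^{-t}
\;\le\;
C_1^t \cdot 2^{m \bigl(11/10 + t\chicrit/(2\log 2) - 29t/30\bigr)}.
\]
Under the uniform bound on~$\chicrit$, the $m$-exponent is bounded above by $-t/30 - t\gamma_0$ for every $t \ge 3$ (with the inequality saturated at $t = 3$ when $\gamma_0 = 0$). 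Geometric summation over $m \ge \wtm$ yields a bound of the form $15\, C_1^t \cdot 2^{-\wtm t/30}$, since $t/30 \ge 1/10$ implies $1 - 2^{-t/30} \ge 1/15$. Setting $C_5 := 16^{1/3} C_1$ absorbs all remaining constants, including the initial $+1$ in $L_{t,p}(y)$ (handled by applying the tail bound at $\wtm = 1$).

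The main obstacle is the uniform upper bound on~$\chicrit$. Although $\chicrit$ depends sensitively on the itinerary of the critical orbit in~$\Lambda_c$---and this itinerary ranges over all of $\{0,1\}^{\N_0}$ as $c$ varies in $\cK_n$ by Proposition~\ref{p:ps}---the bounded-distortion argument produces a bound depending only on the two fixed-point derivatives of~$g_c$, both uniformly close to~$8$ for $c$ near~$-2$. The numerology is tight: the required inequality $\chicrit/(2\log 2) + \varepsilon \le 3/5 - 11/(10t)$ is saturated at $t = 3$ and relies on $\chicrit$ being close to $\log 2$, the natural asymptotic Lyapunov contribution predicted by the tent-map semiconjugacy.
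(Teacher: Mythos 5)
Your proposal has the same skeleton as the paper's proof: bound $|DL_c(z)|^{-t}$ by Proposition~\ref{p:landing derivatives}, count the $m$-th preimages of $y$ trivially by $2^m$, control $\exp(-m_c(z)p)$ through a uniform upper bound on $\chicrit$, and sum geometrically in $m = m_c(z)$. Your choice $\varepsilon = 1/30$ (the paper takes $1/45$) and the constant bookkeeping are fine, modulo a harmless factor-of-two slip: with your hypotheses the $m$-exponent is $\le -t/30 - t\gamma_0/2$, not $\le -t/30 - t\gamma_0$; since all one needs is $\le -t/30$ this does not matter.

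The genuine gap is the uniform bound on $\chicrit$. You claim that bounded distortion of $g_c = f_c^3$ on $\Lambda_c$ gives
$$\chicrit \le \tfrac{1}{3}\log\max\{|Dg_c(p(c))|, |Dg_c(\wtp(c))|\},$$
but this does not follow. What Lemma~\ref{l:distortion to central 0} gives, applied block by block as in the proof of the Main Theorem (see display~\eqref{e:strech exponential estiamte}), is that for $x \in \Lambda_c$ with itinerary $(a_j)_j$ and each $k \ge 1$ one has
$$|Dg_c^k(x)| \le \Delta_2^{\,2B(k)+O(1)}\prod_{i}|Dg_c(q_i)|^{\ell_i},$$
where the product is over maximal blocks of $0$'s or $1$'s, $q_i \in \{p(c), \wtp(c)\}$, $\ell_i$ is the block length, and $B(k)$ is the number of blocks. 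For the special parameter $c_0$ in the Main Theorem one has $B(k) = O(\sqrt{k})$, so the distortion factor is subexponential and one does get $\chicritbase = \tfrac{1}{3}\log|Dg_{c_0}(\wtp(c_0))|$. But the present lemma has to hold for \emph{every} $c \in \cK_n$, and by Proposition~\ref{p:ps} the itinerary of the critical orbit ranges over all of $\{0,1\}^{\N_0}$; for an itinerary with $B(k) \sim k$ your argument only yields
$$\chicrit \le \tfrac{1}{3}\log\max\{|Dg_c(p(c))|, |Dg_c(\wtp(c))|\} + \tfrac{2}{3}\log\Delta_2,$$
and $\Delta_2 > 1$ is a fixed Koebe constant that does not tend to $1$ as $n \to \infty$. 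So the bound you need is not a consequence of bounded distortion on $\Lambda_c$ together with the fixed-point multipliers. The paper closes this step with Lemma~\ref{l:quasi tent estimate}: applied with $m_1 = 4$ and $z = f_c^n(c)$ (whose forward orbit avoids $P_{c,4}(0)$ by Lemma~\ref{l:off postcritical} and returns to $P_{c,1}(0)$ at times $3k$), the tent-map semiconjugacy near $c = -2$ gives $|Df_c^{3k}(f_c^n(c))| \le C_2\,2^{3k(1+\varepsilon_0)}$ for all $k$, hence $\chicrit \le (1+\varepsilon_0)\log 2$ uniformly over $\cK_n$ once $n$ is large. With that substitution your remaining estimates go through.
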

\begin{proof}
Put~$\varepsilon_0 \= \tfrac{1}{45}$, let~$C_1$ and~$n_2$ be given by 
Proposition~\ref{p:landing derivatives} with~$\varepsilon = \varepsilon_0$, and let~$n_3$ be given by Lemma~\ref{l:quasi tent estimate} with~$\varepsilon = \varepsilon_0$ and~$m_1 = 4$.
We prove the lemma with~$n_5 \= \max \{n_2, n_3 \}$ and~$C_5 \= C_1 \left( 1 - 2^{- 1/10} \right)^{- 1/3}$.

Let~$n$, $c$, $t$, $p$ and~$y$ be as in the statement of the lemma.
By Lemma~\ref{l:quasi tent estimate} with~$z = f_c^n(c)$, we have
$$ \chicrit
=
\liminf_{m \to + \infty} \frac{1}{m} \log |Df_c^m(c)|
\le
(1 + \varepsilon_0) \log 2. $$
On the other hand, for each integer~$m \ge 1$ the set~$\{ z \in L_c^{-1}(y) \mid
m_c(z) = m \}$ is contained in~$f_c^{-m}(y)$ and therefore it contains at
most~$2^m$ points.
So by Proposition~\ref{p:landing derivatives} and the definition of~$C_5$, 
for every integer $\wtm\ge 1$ we have
$$ \sum_{\substack{z \in L_c^{-1}(y), \\ m_c(z) \ge \wtm} } \exp(- m_c(z) p)
|DL_c (z)|^{-t}
\le
C_1^t \sum_{m = \wtm}^{+ \infty} 2^{m \left( 1 - \tfrac{11}{30} t \right)} 
\le
C_5^t 2^{-\frac{t}{30}\wtm}. $$
\end{proof}

\begin{lemm}
\label{l:Poincare series}
Given an integer~$n \ge 5$ and a parameter~$c$ in~$\cK_n$, the following property holds for every~$t > 0$ and every real number~$p$: If~$\sP_c^{\R}(t, p) > 0$ (resp. $\sP_c^{\C}(t, p) > 0$), then the series
\begin{multline}
  \label{e:Poincare series}
 \sum_{j = 1}^{+ \infty} \exp(- j p) \sum_{y \in f_c|_{I_c}^{-j}(0)} |Df_c^j (y)|^{-t}  
\\ 
\left( \text{resp. $\sum_{j = 1}^{+ \infty} \exp(- j p) \sum_{y \in f_c^{-j}(0)} |Df_c^j (y)|^{-t}$} \right)  
\end{multline}
diverges.
On the other hand, there is~$n_6 \ge 5$ such that if in addition~$n \ge n_6$, then for every~$t \ge 3$ and
$$ p \ge P_c^{\R}(t) - t \tfrac{1}{10} \log 2
\left( \text{resp. $p \ge P_c^{\C}(t) - t \tfrac{1}{10} \log 2$ } \right)$$
satisfying $\sP_c^{\R}(t, p) < 0$ (resp. $\sP_c^{\C}(t, p) < 0$), the series above converges.
\end{lemm}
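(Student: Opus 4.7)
The plan is to reorganize the backward orbit tree of~$0$ according to its visits to~$V_c$ so that the Poincar\'e series~\eqref{e:Poincare series} factors, up to a bounded distortion constant, as a product of an $F_c$-preimage sum governed by $\sP_c^\bullet(t,p)$ and a first-landing contribution controlled by Lemma~\ref{l:landing contribution}. I will treat the complex case; the real case is identical after replacing $\fD_c$ by $\fD_c^{\R}$ and restricting all preimages to~$I_c$.

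First I would fix $y \in f_c^{-j}(0)$ with $j \ge 1$ and record the times $0 \le i_1 < \cdots < i_k = j$ at which $f_c^{i}(y) \in V_c$ (the identity $i_k = j$ uses $0 \in V_c$). Setting $w_s \= f_c^{i_s}(y) \in V_c$, two cases arise: if $y \in V_c$ then $w_1 = y$ and $F_c^{k-1}(y) = 0$, while if $y \notin V_c$ then $L_c(y) = w_1$, $F_c^{k-1}(w_1) = 0$, and $y$ lies on one branch of~$L_c^{-1}$ over~$w_1$. In either case the chain rule factors the derivative; summing over all $y$, $j$ and $k \ge 1$, writing $M_{t,p}(w) \= L_{t,p}(w) - 1$ and $j'(w)$ for the total $F_c$-return time from $w$ to $0$, one arrives at
$$ \sum_{j \ge 1} e^{-jp} \sum_{y \in f_c^{-j}(0)} |Df_c^j(y)|^{-t}
= M_{t,p}(0) + \sum_{k \ge 1} \sum_{w \in F_c^{-k}(0)} e^{-j'(w)p} |Df_c^{j'(w)}(w)|^{-t}\, L_{t,p}(w). $$
Note that $F_c^{-k}(0)$ never contains~$0$, because the forward orbit of~$0$ avoids $V_c \subset P_{c,1}(0)$ by Lemma~\ref{l:off postcritical}, so no special bookkeeping for $w = 0$ is needed.

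By the univalent extension and uniform bounded distortion of Lemma~\ref{l:bounded distortion to nice}, for each~$k$ the inner sum is comparable, with a constant $\Delta_3^t$ independent of~$k$, to $Z_{c,k}^{\C}(t,p)$: the contribution at the preimage $\phi_{\underline W}(0)$, for $\underline W \in E_{c,k}$, is $e^{-m_c(\underline W)p} |D\phi_{\underline W}(0)|^{t}$, while $\sup_{V_c} |D\phi_{\underline W}|^t$ in the definition of $Z_{c,k}^{\C}$ differs from this by at most a factor $\Delta_3^t$. For the divergence half I would use only the trivial bound $L_{t,p}(w) \ge 1$: if $\sP_c^{\C}(t,p) > 0$ then $Z_{c,k}^{\C}(t,p)$ grows exponentially in~$k$, so the series in~$k$ diverges and hence so does~\eqref{e:Poincare series}.

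For the convergence half I would take $n_6 \= n_5$, where $n_5$ comes from Lemma~\ref{l:landing contribution}. The hypothesis $p \ge P_c^{\C}(t) - t \tfrac{1}{10} \log 2$ combined with the lower bound $P_c^{\C}(t) \ge -t \chicrit/2$ from Proposition~\ref{p:critical line} places~$p$ in the range of validity of Lemma~\ref{l:landing contribution}, yielding $L_{t,p}(w) \le C_5^t$ uniformly in $w \in V_c$; this constant pulls out of the sum, and $\sP_c^{\C}(t,p) < 0$ then forces geometric decay of $Z_{c,k}^{\C}(t,p)$, which gives convergence. I expect the main obstacle to lie in the bookkeeping of the decomposition, namely ensuring that each $y$ is counted exactly once with the correct derivative factorization, together with verifying that the distortion constant in the comparison with $Z_{c,k}^{\C}$ is genuinely independent of~$k$; both rest critically on the univalent extension of each $\phi_W$ to~$\hV_c$ supplied by Lemma~\ref{l:bounded distortion to nice}.
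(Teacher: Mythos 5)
Your proposal is correct and follows essentially the same route as the paper's proof: decompose the preimages of~$0$ according to their $F_c$-itinerary, compare the resulting $F_c$-preimage sums with $Z_{c,\ell}^{\C}(t,p)$ (resp.\ $Z_{c,\ell}^{\R}(t,p)$) via the uniform distortion constant~$\Delta_3$ from Lemma~\ref{l:bounded distortion to nice}, and control the first-landing contributions through Lemma~\ref{l:landing contribution} combined with the lower bound from Proposition~\ref{p:critical line}. The only cosmetic difference is that you record the decomposition as an exact identity (introducing $M_{t,p}$ and $j'$), whereas the paper writes down only the one-sided bounds it needs.
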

\begin{proof}
We prove the assertions concerning~$f_c|_{J_c}$; the arguments apply without change to~$f_c|_{I_c}$.
Let~$\Delta_3 > 1$ be the constant given by Lemma~\ref{l:bounded distortion to nice}.

Suppose first~$\sP_c^{\C}(t, p) > 0$.
Since for each integer~$\ell \ge 1$ every point of~$F_c^{-\ell}(0)$ is a preimage of~$0$ by an iterate of~$f_c$, by Lemma~\ref{l:bounded distortion to nice} the series~\eqref{e:Poincare series} is bounded from below by,
\begin{multline*}
\sum_{\ell = 1}^{+ \infty} \sum_{y \in F_c^{-\ell}(0)} \exp( - (m_c(F_c^{\ell - 1}(y)) + \cdots + m_c(y)) p) |DF_c^\ell (y)|^{-t}
\\ \ge
\Delta_3^{-t} \sum_{\ell = 1}^{+ \infty} Z_{c, \ell}^{\C}(t, p)
=
+ \infty.
\end{multline*}

To prove the last part of the lemma, let~$n_5$ and~$C_5 > 1$ be given by Lemma~\ref{l:landing contribution}.
We prove the desired assertion with~$n_6 = n_5$.
Suppose in addition we have~$n \ge n_5$ and let
$$ t \ge 3
\text{ and }
p \ge P_c^{\C}(t) - t \tfrac{1}{10} \log 2 $$
be such that~$\sP_c^{\C}(t, p) < 0$.
By Proposition~\ref{p:critical line} we have~$p \ge - t(\chicrit + \tfrac{1}{5} \log 2)/2$, so~$t$ and~$p$ satisfy the hypotheses of Lemma~\ref{l:landing contribution}.
Given an integer~$m \ge 1$ and a point~$z$ in~$f_c^{-m}(0)$ denote by~$\ell(z)$ the number of those~$j$ in~$\{0, \ldots, m - 1 \}$ such that~$f_c^j(z)$ is in~$V_c$.
In the case where~$z$ is not in~$V_c$, this point is in the domain of~$L_c$ and we have~$\ell(z) = 0$ if and only~$L_c(z) = 0$.
Moreover, if~$z$ is not in~$V_c$ and~$\ell(z) \ge 1$, then~$L_c(z)$ is in the domain of~$F_c^{\ell(z)}$ and~$F_c^{\ell(z)}(L_c(z)) = 0$.
So, if~$z$ is not in~$V_c$ we have in all the cases
$$ |Df_c^m(z)| = |DF_c^{\ell(z)} (L_c(z))| \cdot |DL_c (z)|. $$
Then Lemma~\ref{l:landing contribution} implies that the series~\eqref{e:Poincare series} is bounded from above by
\begin{multline*}
L_{t, p}(0) + \sum_{\ell = 1}^{+ \infty} \sum_{y \in F_c^{- \ell}(0)} L_{t, p}(y) \exp(- (m_c(F_c^{\ell - 1}(y)) + \cdots + m_c(y))p) |DF_c^{\ell} (y)|^{-t}
\\ \le
C_5^t \left( 1 + \sum_{\ell = 0}^{+ \infty} Z_{c, \ell}^{\C}(t, p) \right)
< + \infty.
\end{multline*}
\end{proof}
\begin{proof}[Proof of Proposition~\ref{p:Bowen type formula}]
We prove the assertion for~$f_c|_{J_c}$; the arguments apply without change to~$f_c|_{I_c}$.
Let~$\Delta_3 > 1$ be given by Lemma~\ref{l:bounded distortion to nice} and~$n_6$ by Lemma~\ref{l:Poincare series}.
Let~$n \ge n_6$ be an integer and let~$c$ by a parameter in~$\cK_n$.
We use that fact that for each~$t > 0$ we have
\begin{equation}
\label{e:pressure of original}
P_c^{\C}(t)
=
\limsup_{m \to + \infty} \frac{1}{m} \log \sum_{y \in f_c^{-m}(0)}
|Df_c^m(y)|^{-t},
\end{equation}
see for example~\cite{Urb03c} or~\cite{PrzRivSmi04}.

Fix~$t \ge 3$.
We use the fact that the function~$p \mapsto \sP_c^{\C}(t, p)$ is strictly decreasing where it is finite, see~\S\ref{ss:Bowen type formula}.
In particular, for each~$p$ satisfying~$p < p_0 \= \inf \{ p : \sP_c^{\C}(t, p) \le 0 \}$ we have~$\sP_c^{\C}(t, p) > 0$.
Lemma~\ref{l:Poincare series} implies that for such~$p$ the series~\eqref{e:Poincare series} diverges and by~\eqref{e:pressure of original} we have~$P_c^{\C}(t) \ge p > p_0$.
To prove the reverse inequality, suppose by contradiction~$p_0 <  P_c^{\C}(t)$ and let~$p$ be in the interval~$(p_0, P_c^{\C}(t))$ satisfying~$p \ge P_c^{\C}(t) - t \tfrac{1}{10} \log 2$.
Then~$\sP_c^{\C}(t, p) < 0$ and by Lemma~\ref{l:Poincare series} the series~\eqref{e:Poincare series} converges.
Then~\eqref{e:pressure of original} implies~$P_c^{\C}(t) \le p$ and we obtain a contradiction that completes the proof of the proposition.
\end{proof}

\section{Estimating the geometric pressure function}
\label{s:estimating pressure}
The purpose of this section is to prove the following proposition.
The proof of Proposition~\ref{p:first-order phase transition}, at the end of this section, is based on this proposition, together with Propositions~\ref{p:Bowen type formula} and~\ref{p:critical line}.

Recall that for a real parameter~$c$,
$$ \chicrit = \liminf_{m \to + \infty} \frac{1}{m} \log |Df_c^m(c)|. $$
\begin{propalph}
\label{p:improved MS criterion}
There are~$n_7 \ge 5$ and~$C_6 > 1$ such that for every integer~$n \ge n_7$ and every parameter~$c$ in~$\cK_n$ the following properties hold for each~$t \ge 3$.
\begin{enumerate}
\item[1.]
For~$p$ in~$[- t \chicrit/2, 0)$ satisfying
$$ \sum_{k = 0}^{+ \infty} \exp(- (n + 3k)p)|Df_c^{n + 3k} (c)|^{-t/2}
\ge
C_6^{t}, $$
we have $\sP_c^{\R}(t, p) > 0$ and~$P_c^{\R}(t) \ge p$.
If in addition the sum above is finite, then~$\sP_c^{\C}(t, p)$ is finite and~$P_c^{\R}(t) > p$.
\item[2.]
For~$p \ge - t \chicrit/2$ satisfying
$$ \sum_{k = 0}^{+ \infty} \exp(- (n + 3k)p)|Df_c^{n + 3k} (c)|^{-t/2}
\le
C_6^{-t}, $$
we have $\sP_c^{\C}(t, p) < 0$ and~$P_c^{\C}(t) \le p$.
\item[3.]
For~$p \ge - t \chicrit/2$ satisfying
$$ \sum_{k = 0}^{+ \infty} k \cdot \exp(- (n + 3k)p)|Df_c^{n + 3k} (c)|^{-t/2}
<
+ \infty, $$
we have
$$ \sum_{W \in \fD_c} m_c(W) \cdot \exp(- m_c(W)p) \sup_{z \in W}|DF_c(z)|^{-t}
<
+ \infty. $$
\end{enumerate}
\end{propalph}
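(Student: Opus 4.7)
The plan is to use the Bowen-type formula of Proposition~\ref{p:Bowen type formula} together with an estimate of $\sP_c^{\R/\C}(t,p)$ based only on its first term $Z_{c,1}^{\R/\C}(t,p)$. Lemma~\ref{l:bounded distortion to nice} yields the two-sided sandwich
\[
\log Z_{c,1}^{\R/\C}(t,p) - t\log\Delta_3 \;\le\; \sP_c^{\R/\C}(t,p) \;\le\; \log Z_{c,1}^{\R/\C}(t,p),
\]
where the upper bound is subadditivity $Z_{c,\ell_1+\ell_2}\le Z_{c,\ell_1}Z_{c,\ell_2}$ (since $\phi_{W_1W_2}=\phi_{W_1}\circ\phi_{W_2}$ and $\phi_{W_2}(V_c)\subset V_c$), and the lower bound is super-additivity up to a factor $\Delta_3^t$ per composition provided by bounded distortion of each $\phi_W$. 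Consequently, the sign of $\sP_c^{\R/\C}(t,p)$ is controlled, up to a multiplicative factor $\Delta_3^{\pm t}$, by $Z_{c,1}^{\R/\C}(t,p)$.

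The core step is therefore a two-sided comparison of $Z_{c,1}^{\R/\C}(t,p)$ with the postcritical series
\[
\Sigma(t,p) \= \sum_{k=0}^{+\infty} \exp(-(n+3k)p)\,|Df_c^{n+3k}(c)|^{-t/2},
\]
with multiplicative constants of the form $C^{\pm t}$ independent of $n$ and of $c\in\cK_n$. As announced in~\S\ref{ss:organization}, this would be packaged as the key Lemma~\ref{l:estimating Z_1 by the postcritical series}: partition $\fD_c$ into levels $\fD_c^{(k)}$ by placing $W$ at level $k$ when its orbit first aligns with the critical orbit through the branch of $\Lambda_c$ dictated by the itinerary digit $\iota(c)_k$. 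Level $k$ contains the distinguished real component $W_k$ produced by Lemma~\ref{l:critical line}, with $m_c(W_k)=n+3k+3$ and $\sup_{W_k}|DF_c|\le C_4|Df_c^{n+3k}(c)|^{1/2}$; every other component at level $k$ is obtained from $W_k$ by prepending an inverse branch of some $g_c^j$ on $\Lambda_c$ (whose weights sum to a uniform $O(1)^t$ factor thanks to the uniform hyperbolicity of $g_c$ and the Markov structure of Lemma~\ref{lem:Markov partition}) followed by a landing piece into $V_c$ (whose weights sum to $C_5^t$ uniformly by Lemma~\ref{l:landing contribution}, precisely on the half-line $p\ge-t\chicrit/2$). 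The uniformity of Lemma~\ref{l:distortion to central 0} in $n$ and $c$ makes the resulting estimates independent of $n$ and of $c\in\cK_n$, and the quasi-tent estimate of Lemma~\ref{l:quasi tent estimate} supplies the matching lower bound on $|DF_c|$ on non-distinguished components.

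Once the comparison $C^{-t}\Sigma(t,p)\le Z_{c,1}^{\R/\C}(t,p)\le C^t\Sigma(t,p)$ is established, the three parts follow quickly with $C_6 \= C\Delta_3$. Part~2 is immediate: $\Sigma\le C_6^{-t}$ forces $Z_{c,1}^{\C}(t,p)\le\Delta_3^{-t}$, hence $\sP_c^{\C}(t,p)\le 0$ and $P_c^{\C}(t)\le p$ by Proposition~\ref{p:Bowen type formula}. Part~1: the lower bound $\Sigma\ge C_6^t$ forces $Z_{c,1}^{\R}(t,p)\ge\Delta_3^t$, hence $\sP_c^{\R}(t,p)>0$ and $P_c^{\R}(t)\ge p$; when $\Sigma$ is finite one also obtains $Z_{c,1}^{\C}(t,p)<+\infty$ and hence $\sP_c^{\C}(t,p)<+\infty$, and the strict inequality $P_c^{\R}(t)>p$ then follows from the strict monotonicity of $p\mapsto\sP_c^{\R}(t,p)$ on its finiteness region, combined with $P_c^{\R}(t)=\inf\{p:\sP_c^{\R}(t,p)\le 0\}$. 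Part~3 uses the same level decomposition weighted by $m_c(W)$, together with the uniform upper bound $m_c(W)\le(n+3k)+O(1)$ at level $k$ again supplied by Lemma~\ref{l:landing contribution}, reducing the required finiteness to the hypothesis on the weighted postcritical series. The main obstacle is the \emph{upper} bound in the comparison: one must control uniformly in $k$, $n$, and $c$ the total contribution of the infinitely many non-distinguished components at each level, and it is precisely the uniformity in $n$ and $c\in\cK_n$ of Lemma~\ref{l:distortion to central 0} and the threshold $p\ge-t\chicrit/2$ of Lemma~\ref{l:landing contribution} that make this feasible.
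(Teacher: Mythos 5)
Your overall strategy matches the paper's: control $\sP_c^{\R/\C}(t,p)$ via the first term $Z_{c,1}^{\R/\C}(t,p)$ of the subadditive sequence (using the uniform distortion bound), compare $Z_{c,1}$ with the postcritical series $\Sigma(t,p)$ by decomposing $\fD_c$ into levels, and then invoke the Bowen-type formula. The paper packages the two-sided comparison exactly as you anticipate in Lemma~\ref{l:estimating Z_1 by the postcritical series}, built on Lemmas~\ref{l:critical line}, \ref{l:landing contribution}, and~\ref{l:level k contribution}, with the level of $W$ defined as the largest $k$ such that $W\subset P_{c,n+3k+2}(0)$. That part of your plan is sound.

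There are, however, two genuine gaps. First, in the ``moreover'' part of part~1, you claim that $P_c^{\R}(t)>p$ follows from $\sP_c^{\R}(t,p)>0$, $\sP_c^{\R}(t,p)<+\infty$, and the \emph{strict monotonicity} of $p'\mapsto\sP_c^{\R}(t,p')$ together with the Bowen formula. That inference is not valid: strict monotonicity of a convex function at a single finite value does not preclude $\inf\{p':\sP_c^{\R}(t,p')\le 0\}=p$ (the induced-pressure function need not be right-continuous at the left endpoint of its finiteness region). The paper avoids this by taking $C_6 = 2C_8$ rather than $C_8$ and perturbing $p$: if $\Sigma(t,p)\ge(2C_8)^t$ and $\Sigma(t,p)<+\infty$, then by monotone convergence of $\Sigma$ in the $p$-variable there is $p'>p$ with $\Sigma(t,p')\ge C_8^t$, whence $P_c^{\R}(t)\ge p'>p$. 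You need some such perturbation argument; bare monotonicity of $\sP$ does not suffice.

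Second, and more seriously, your treatment of part~3 rests on the assertion that $m_c(W)\le (n+3k)+O(1)$ for $W$ at level $k$, attributed to Lemma~\ref{l:landing contribution}. This is false: a level-$k$ component has $m_c(W)=n+3k+1+m_c(w_W)$ where $w_W$ ranges over the first-landing-to-$V_c$ tree of $0$ inside $P_{c,1}(0)$, and $m_c(w_W)$ is unbounded. Lemma~\ref{l:landing contribution} gives decay of the \emph{weighted} tails $\sum_{m_c(z)\ge\wtm}\exp(-m_c(z)p)|DL_c(z)|^{-t}$, not a deterministic bound on $m_c$. The paper handles part~3 by observing that for $m\ge 3k+1$ and $t\ge 3$ there is $A>0$ with $m\le A\,k\,2^{t(m-3k)/10}$, and then absorbing the factor $2^{t\,m_c(W)/10}$ into a shifted pressure parameter $p'=p-\tfrac{t}{10}\log 2$; the $-\tfrac{t}{10}\log 2$ margin built into the comparison lemma (valid for $p\ge -t\chicrit/2-\tfrac{t}{10}\log 2$, not merely $p\ge -t\chicrit/2$) is precisely what makes this absorption possible. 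Your proposal, by assuming that margin is unavailable (``precisely on the half-line $p\ge-t\chicrit/2$''), both misstates the threshold and misses the mechanism needed for part~3.
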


The proof of Proposition~\ref{p:improved MS criterion} is given after Lemma~\ref{l:level k contribution}, below, which is used in the proof.
The proof of Proposition~\ref{p:first-order phase transition} is given after the proof of Proposition~\ref{p:improved MS criterion}.

Let~$n \ge 4$ be an integer and~$c$ a parameter in~$\cK_n$.
Since the critical point~$z = 0$ does not belong to~$D_c$ (\emph{cf}., Lemma~\ref{l:off postcritical}), for each integer~$\ell \ge 1$, each connected component of~$D_c$ intersecting~$P_{c, \ell}(0)$ is contained
in~$P_{c, \ell}(0)$.
We define the \emph{level} of a connected component~$W$ of~$D_c$ as the largest integer~$k \ge 0$ such that~$W$ is contained in~$P_{c, n + 3k + 2}(0)$.
Given an integer~$k \ge 0$ denote by~$\fD_{c, k}$ the collection of all connected components of~$D_c$ of level~$k$; we have~$\fD_c = \bigcup_{k = 0}^{+ \infty} \fD_{c, k}$.

For future reference, the following lemma is stated in a stronger form than what is needed for this paper.

\begin{lemm}
\label{l:level k contribution}
There is~$C_7 > 0$ such that for each integer~$n \ge 5$, each parameter~$c$ in~$\cK_n$, each integer~$k \ge 0$, and each pair of real
numbers~$t > 0$ and~$p$, we have
\begin{multline*}
\sum_{W \in \fD_{c, k}} \exp(- m_c(W) p) \sup_{z \in W}|DF_c(z)|^{-t}
\\ \le
2 C_7^t \exp(- (n + 3k + 1) p)|Df_c^{n + 3k} (c)|^{-t/2}
\\ \cdot
\left( 1 + \sum_{w \in L_c^{-1}(0) \text{ in } P_{c, 1}(0)} \exp(- m_c(w) p) |DL_c(w)|^{-t} \right).
\end{multline*}
Moreover, for every integer $\wtm\ge 1$, we have
\begin{multline*}
\sum_{\substack{W \in \fD_{c, k}, \\ m_c(W) \ge \wtm + n + 3k + 1}} \exp(- m_c(W) p) \sup_{z \in W}|DF_c(z)|^{-t}
\\ \le
2 C_7^t \exp(- (n + 3k + 1) p)|Df_c^{n + 3k} (c)|^{-t/2}
\\ \cdot
\left(\sum_{\substack{w \in L_c^{-1}(0) \text{ in } P_{c, 1}(0), \\ m_c(w) \ge \wtm}} \exp(- m_c(w) p) |DL_c(w)|^{-t} \right).
\end{multline*}
\end{lemm}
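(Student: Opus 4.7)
My plan is to factor the first-return map $F_c|_W = f_c^{m_c(W)}$ on each level-$k$ component $W$ as a \emph{critical shadow} block $f_c^{n + 3k + 1}|_W$ followed by a \emph{landing} block $f_c^r$ with $r \= m_c(W) - (n + 3k + 1) \ge 0$, and to estimate each block separately using the uniform distortion bounds of Lemmas~\ref{l:distortion to central 0} and~\ref{l:bounded distortion to nice}.

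The first and main step is to establish a lower bound of the form
\[
|Df_c^{n + 3k + 1}(z_0)| \ge C'^{-1} |Df_c^{n + 3k}(c)|^{1/2}
\]
at a suitably chosen base point $z_0 \in W$, with $C'$ independent of $n$, $c$, $k$, and $W$. The level-$k$ assumption guarantees the existence of some $z_0 \in W \setminus P_{c, n + 3k + 5}(0)$; for any such $z_0$ we have $f_c(z_0) \in P_{c, n + 3k + 1}(c) \setminus P_{c, n + 3k + 4}(c)$, where $P_{c, n + 3k + 1}(c)$ denotes the depth-$(n + 3k + 1)$ puzzle piece containing the critical value $c$. Lemma~\ref{l:distortion to central 0} applied to the univalent branch $f_c^{n + 3k} : P_{c, n + 3k + 1}(c) \to P_{c, 1}(0)$ controls both $|Df_c^{n + 3k}(f_c(z_0))|$ and $\diam P_{c, n + 3k + 1}(c)$ in terms of $|Df_c^{n + 3k}(c)|$ and its reciprocal; combined with a uniform positive lower bound, valid for $n$ sufficiently large by compactness of $\cK_n$ and continuous dependence of the puzzle on $c$, on the distance from $f_c^{n + 3k}(c) \in \Lambda_c$ to $\partial P_{c, 4}(f_c^{n + 3k}(c))$, this pulls back via $(f_c^{n + 3k})^{-1}$ to a lower bound of $|f_c(z_0) - c|$ by a uniform constant times $|Df_c^{n + 3k}(c)|^{-1}$. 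The quadratic identity $|z_0|^2 = |f_c(z_0) - c|$ and the chain rule then yield the claimed lower bound on $|Df_c^{n + 3k + 1}(z_0)|$.

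Setting $P_W \= f_c^{n + 3k + 1}(W) \subset P_{c, 1}(0)$, either $r = 0$ and $P_W = V_c$, or $r \ge 1$ and $P_W$ is a component of the first-landing domain of $L_c$ to $V_c$ inside $P_{c, 1}(0) \setminus V_c$ with $f_c^r|_{P_W} = L_c|_{P_W}$. Applying Lemma~\ref{l:distortion to central 0} to the univalent branches of $f_c^{n + 3k + 1}$ on a neighborhood of $W$ and of $L_c$ on a neighborhood of $P_W$, and Lemma~\ref{l:bounded distortion to nice} to pass from $\inf$ to $\sup$ on $W$, I obtain
\[
\sup_{z \in W} |DF_c(z)|^{-t} \le C_7^t\, |Df_c^{n + 3k}(c)|^{-t/2}\, |DL_c(w_W)|^{-t},
\]
where $w_W \in P_W$ is the unique preimage of $0$ under $f_c^r$, with the convention $|DL_c(w_W)|^{-t} \= 1$ when $r = 0$. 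In the latter case $w_W = 0$, while in the former $w_W \in L_c^{-1}(0) \cap P_{c, 1}(0)$.

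Summing over $W \in \fD_{c, k}$ and extracting $\exp(-(n + 3k + 1)p)$ from $\exp(-m_c(W)p) = \exp(-(n + 3k + 1)p) \exp(-m_c(w_W)p)$, the main inequality follows; the correspondence $W \mapsto w_W$ is at most two-to-one, because the first step of the critical shadow block carries a $z \leftrightarrow -z$ ambiguity (recall that $P_{c, n + 3k + 2}(0)$ is invariant under $z \mapsto -z$), and this is responsible for the overall factor of $2$ in the statement. The truncated estimate for $m_c(W) \ge \wtm + n + 3k + 1$ is immediate by restricting the argument to $r \ge \wtm$. The main obstacle is the lower bound in step one: in contrast with the proof of Lemma~\ref{l:critical line}, which used the opposite inequality, here I must exhibit a point $z_0$ where $|Df_c^{n + 3k + 1}|$ is bounded below by a uniform multiple of $|Df_c^{n + 3k}(c)|^{1/2}$. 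This forces us both to use the level-$k$ condition to place $z_0$ outside $P_{c, n + 3k + 5}(0)$ and to secure a uniform separation of points of $\Lambda_c$ from the boundary of the depth-$4$ puzzle pieces they inhabit.
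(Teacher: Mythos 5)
Your proposal follows the same overall structure as the paper's proof: factor $F_c|_W$ at time $n+3k+1$ into a critical block and a landing block, establish a lower bound of the form $|Df_c^{n+3k+1}|_W \gtrsim |Df_c^{n+3k}(c)|^{1/2}$, bound the landing-block contribution via a sum over first-landing preimages of $0$ in $P_{c,1}(0)$, and use the two-to-one correspondence $W \mapsto w_W$ (arising from the folding of $f_c$ at $0$) to produce the factor of $2$. The factorization, the role of $w_W$, the use of Lemmas~\ref{l:distortion to central 0} and~\ref{l:bounded distortion to nice}, and the handling of the truncated sum are all exactly as in the paper.

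The genuine difference is in how the key lower bound on $|Df_c^{n+3k+1}(z_0)|$ is obtained. The paper does \emph{not} use the level-$k$ condition for this step: it applies part~$1$ of Lemma~\ref{l:first return to central derivative} with $x = z_W$ and $q = n+3k+1$, combined with the distortion bound on $f_c^{n+3k}|_{P_{c,n+3k+1}(c)}$, thereby packaging the Koebe--Teichm\"uller estimate once and for all in a previously proved lemma. You instead re-derive the bound from first principles: you use the level-$k$ disjointness $W \cap P_{c,n+3k+5}(0) = \emptyset$ to place $f_c^{n+3k+1}(z_0)$ outside $P_{c,4}(f_c^{n+3k}(c)) \in \{Y_c, \widetilde Y_c\}$, then invoke a uniform lower bound $\dist(\Lambda_c, \partial Y_c \cup \partial \widetilde Y_c) \ge \delta > 0$ together with a Koebe pull-back to get $|f_c(z_0) - c| \gtrsim |Df_c^{n+3k}(c)|^{-1}$, and finish with the quadratic identity. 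Your separation claim is correct (it follows, e.g., from $\Lambda_c \subset g_c^{-1}(Y_c \cup \widetilde Y_c) \Subset Y_c \cup \widetilde Y_c$, a nesting of puzzle pieces that depends continuously on $c$ over the closure of $\cP_4(-2)$), but stating it as a consequence of ``compactness of $\cK_n$ \ldots for $n$ sufficiently large'' is not quite the right justification -- the uniformity must hold for all $n \ge 5$, and the correct compactness argument is over $\overline{\cP_4(-2)}$, not over a single $\cK_n$. The paper's route, by delegating the whole Koebe estimate to the previously proved Lemma~\ref{l:first return to central derivative}, avoids having to establish this separation; yours, while more self-contained, is also somewhat heavier. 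Lastly, the paper remarks that $W \mapsto w_W$ is \emph{precisely} two-to-one, whereas you claim only ``at most two-to-one''; the weaker claim suffices for the stated upper bound, so this does not affect correctness.
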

\begin{proof}
Let~$\Delta_2$, $C_3$, and~$\Delta_3$ be the constants given by 
Lemmas~\ref{l:distortion to central 0}, \ref{l:first return to central derivative} and~\ref{l:bounded distortion to nice}, respectively.

Fix an integer~$n \ge 5$, a parameter~$c$ in~$\cK_n$, and an integer~$k \ge 0$.
Note that there are precisely~$2$ elements~$W'$ of~$\fD_{c, k}$ such
that~$m_c(W') = n + 3k + 1$; denote them by~$W_0$ and~$W_0'$.
Indeed, these sets are the connected components of the  
preimage under $f_c$ of the set 
$(f_c^{n+3k}|_{P_{c,n+3k + 1}(c)})^{-1}(V_c)$.
For a connected component~$W$ of~$D_c$ of level~$k$ denote by~$z_W$ the unique point in~$W$ such that~$F_c(z_W) = 0$.
If~$W$ is different from~$W_0$ and~$W_0'$, then~$z' \= f_c^{n + 3k + 1}(z_W)$ is
different from~$0$ and it is in the domain of definition~$D_c'$ of the first landing map~$L_c$ to~$V_c$.
So, denoting by~$W'$ the connected component of~$D_c'$ containing~$z'$, there is a unique point~$w_W$ in~$W'$ such that~$L_c(w_W) = 0$ and we have
$$ m_c(W) = n + 3k + 1 + m_c(w_W). $$

Since~$f_c^n$ maps~$V_{c, n} = P_{c, n + 1}(c)$ biholomorphically to~$P_{c, 1}(0)$ and~$f_c^n(c)$ is in~$\Lambda_c$, it follows that~$f_c^{n + 3k}$ maps~$P_{c, n + 3k + 1}(c)$ biholomorphically to~$P_{c, 1}(0)$; so the distortion of~$f_c^{n + 3k}$ on~$P_{c, n + 3k + 1}(c)$ is bounded by~$\Delta_2$ (Lemma~\ref{l:distortion to central 0}) and for each point~$y$ in~$P_{c, n + 3k + 1}(c)$ we have
\begin{equation*}
|Df_c^{n + 3k}(y)|
\ge
\Delta_2^{-1} |Df_c^{n + 3k} (c)|.
\end{equation*}
On the other hand, by part~$1$ of Lemma~\ref{l:first return to central derivative} with~$x = z_W$ and~$q = n + 3k + 1$, we have
\begin{equation*}
|Df_c^{n + 3k + 1} (z_W)|
\ge
C_3^{-1} |Df_c^{n + 3k} (f_c(z_W))|^{1/2}
\ge
C_3^{-1} \Delta_2^{-1/2} |Df_c^{n + 3k} (c)|^{1/2}.
\end{equation*}
Together with the inequality,
$$ |Df_c^{m_c(w_W)} (f_c^{n + 3k + 1}(z_W))|
\ge
\Delta_2^{-1} |DL_c(w_W)| $$
given by Lemmas~\ref{l:distortion to central 0} and~\ref{l:univalent pull-back property}, this implies that if we put $C_0 \= C_3^{-1} \Delta_2^{- 3/2}$, then
\begin{equation*}
|DF_c(z_W)|
=
|Df_c^{m_c(W)} (z_W)|
\ge
C_0 |Df_c^{n + 3k} (c)|^{1/2}|DL_c(w_W)|.
\end{equation*}
Since the distortion of~$F_c|_W$ is bounded by~$\Delta_3$ (Lemma~\ref{l:bounded distortion to nice}), for each~$p > 0$ and~$t > 0$, we have
\begin{multline}
\label{e:single component upper estimate}
\exp(- m_c(W) p) \sup_{z \in W} |DF_c(z)|^{-t}
\\ \le
\Delta_3^t C_0^{- t}
\exp(- (n + 3k + 1) p)|Df_c^{n + 3k} (c)|^{-t/2}
\cdot
\exp(- m_c(w_W)p ) |DL_c(w_W)|^{-t}.
\end{multline}
To prove the desired inequality, observe that for each point~$w$ 
of~$L_c^{-1}(0)$ in~$P_{c, 1}(0)$ there are precisely~$2$ connected
components~$W$ of~$D_c$ in $\fD_{c,k}$ such that~$w_W = w$; in fact for each
such~$W$ the set~$f_c(W)$ is uniquely determined as the preimage by the
univalent map~$f_c^{n + 3k}|_{P_{c, n + 3k + 1}(c)}$ of the connected component
of~$D_c'$ containing~$w_W$.
Thus, the desired inequalities follow from~\eqref{e:single component upper estimate} with~$C_7 = \Delta_3 C_0^{-1}$.
\end{proof}

\begin{lemm}
\label{l:estimating Z_1 by the postcritical series}
There are~$n_8 \ge 5$ and~$C_8 > 1$ such that for every integer~$n \ge n_8$
and 
every parameter~$c$ in~$\cK_n$, the following properties hold for each~$t \ge 3$
and each integer $k \ge 0$:
\begin{enumerate}
\item[1.]
For each~$p < 0$, we have
\begin{multline*} 
\sum_{W \in \fD_{c,k}\cap \fD_c^\R} \exp(-m_c(W)p) \inf_{z\in W} |DF_c(z)|^{-t}
>  \\
C_8^{-t} \exp(- (n + 3k)p)|Df_c^{n + 3k} (c)|^{-t/2}.
\end{multline*}
\item[2.]
For each~$p \ge - t \chicrit/2 - t \frac{1}{10} \log 2$, we have
\begin{multline*} 
\sum_{W \in
\fD_{c,k}} \exp(-m_c(W)p) \sup_{z\in W} |DF_c(z)|^{-t}
<  \\
C_8^{t} \exp(- (n + 3k)p)|Df_c^{n + 3k} (c)|^{-t/2}.
\end{multline*}
\end{enumerate}
\end{lemm}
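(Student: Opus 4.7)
The plan is to establish the two bounds separately, with Part~1 requiring only the single explicit real component produced earlier, and Part~2 following by combining the per-level decomposition of Lemma~\ref{l:level k contribution} with the landing bound of Lemma~\ref{l:landing contribution}.

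For Part~1, I would apply Lemma~\ref{l:critical line}: for each integer $k \ge 0$ it produces a real component $W_k \in \fD_c^{\R}$ contained in $P_{c, n + 3k + 2}(0)$, with $m_c(W_k) = n + 3k + 3$ and $\sup_{z \in W_k} |DF_c(z)| \le C_4 |Df_c^{n+3k}(c)|^{1/2}$. I would verify that $W_k$ is of level exactly $k$: by construction $f_c^{n+3k+1}(W_k)$ equals one of the pieces $X_c, \tX_c$ which are disjoint from $\hV_c = P_{c,4}(0)$, so $W_k \not\subset f_c^{-(n+3k+1)}(\hV_c)$ and in particular $W_k \not\subset P_{c, n + 3(k+1) + 2}(0)$. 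Thus $W_k \in \fD_{c,k} \cap \fD_c^{\R}$. Since $\inf_{z \in W_k} |DF_c(z)|^{-t} = (\sup_{z \in W_k} |DF_c(z)|)^{-t} \ge C_4^{-t} |Df_c^{n+3k}(c)|^{-t/2}$ and, for $p < 0$, $\exp(-m_c(W_k) p) = \exp(-3p) \exp(-(n+3k) p) > \exp(-(n+3k) p)$, the single contribution of $W_k$ already produces the desired strict lower bound, provided $C_8 \ge C_4$.

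For Part~2, I would invoke Lemma~\ref{l:level k contribution} to bound the full sum over $\fD_{c,k}$ by
\[
2 C_7^t \exp(-(n + 3k + 1) p) |Df_c^{n+3k}(c)|^{-t/2} \cdot \Bigl( 1 + \sum_{w \in L_c^{-1}(0) \cap P_{c,1}(0)} \exp(-m_c(w) p) |DL_c(w)|^{-t} \Bigr).
\]
The parenthesized factor is at most $L_{t,p}(0)$, which Lemma~\ref{l:landing contribution} bounds by $C_5^t$ on the allowed range of $p$. It remains to absorb $\exp(-p) = \exp(-(n + 3k + 1) p) / \exp(-(n + 3k) p)$: using the hypothesis $p \ge -t \chicrit/2 - t \tfrac{1}{10} \log 2$ together with the uniform bound $\chicrit \le (1 + \varepsilon_0) \log 2$ obtained by applying Lemma~\ref{l:quasi tent estimate} to $z = f_c^n(c) \in \Lambda_c$ (which avoids $P_{c,4}(0)$ by the discussion in~\S\ref{ss:expanding Cantor set}), one gets $\exp(-p) \le A^t$ for an absolute constant $A > 0$. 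Combined with $2 \le 2^{t/3}$ for $t \ge 3$, this yields an upper bound of the form $C_8^t \exp(-(n + 3k) p) |Df_c^{n+3k}(c)|^{-t/2}$ once $C_8$ is chosen at least $\max\{ C_4,\; 2^{1/3} A C_5 C_7 \}$.

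The main obstacle is bookkeeping. One must verify that the hypothesis $p \ge -t \chicrit/2 - t \tfrac{1}{10}\log 2$ here indeed lies inside the range where Lemma~\ref{l:landing contribution} applies, and that the level of the constructed real component from Lemma~\ref{l:critical line} is exactly $k$ (so that $W_k$ does contribute to $\fD_{c,k}$, not to some $\fD_{c,k'}$ with $k' > k$). The choices $n_8 \ge \max\{5, n_5\}$ and a large enough $C_8 > 1$, uniform over $n \ge n_8$ and $c \in \cK_n$, follow from assembling these constants.
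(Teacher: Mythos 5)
Your proof proposal follows the same route as the paper's: Part~1 via the single real component $W_k$ from Lemma~\ref{l:critical line}, and Part~2 by combining Lemma~\ref{l:level k contribution} with the bound $L_{t,p}(0)\le C_5^t$ from Lemma~\ref{l:landing contribution} and then absorbing the surplus factor $2\exp(-p)$ via the upper bound on $\chicrit$ from Lemma~\ref{l:quasi tent estimate}. The constants $C_8$ you write down (roughly $\max\{C_4,\,2C_7C_5\}$ up to adjustments) match the paper's $C_8=\max\{C_4,\,2C_7C_5\}$.

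Two remarks on the extra care you take. First, your concern about the range of $p$ is well founded: as stated, Lemma~\ref{l:landing contribution} requires $p\ge -t\chicrit/2-\tfrac{1}{10}\log 2$, which for $t\ge 3$ is a \emph{smaller} interval than the hypothesis $p\ge -t\chicrit/2-\tfrac{t}{10}\log 2$ of Part~2, so the invocation is not literally licensed. The paper silently glosses over this; redoing the series computation inside Lemma~\ref{l:landing contribution} under the weaker hypothesis (with $\varepsilon_0=1/45$) still gives a geometric series with exponent $1-\tfrac{11t}{30}<0$ for $t\ge 3$, so the conclusion survives with the same constant; this indicates the hypothesis of Lemma~\ref{l:landing contribution} should in fact have carried the factor $t$. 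Second, your verification that $W_k$ has level exactly $k$ is a good addition (the paper uses this implicitly without comment), but the intermediate step ``$W_k\not\subset f_c^{-(n+3k+1)}(\hV_c)$, and \emph{in particular} $W_k\not\subset P_{c,n+3(k+1)+2}(0)$'' is not valid: $f_c^{n+3k+1}$ maps $P_{c,n+3k+5}(0)$ onto $P_{c,4}(f_c^{n+3k}(c))$, \emph{not} onto $\hV_c=P_{c,4}(0)$, since $f_c^{n+3k}(c)\in\Lambda_c$ avoids $P_{c,4}(0)$; so $P_{c,n+3k+5}(0)$ is itself not contained in $f_c^{-(n+3k+1)}(\hV_c)$ and the implication collapses. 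The conclusion is nonetheless correct and can be fixed directly: if $W_k\subset P_{c,n+3k+5}(0)$ then $X_c$ (or $\tX_c$), a puzzle piece of depth $3$, would be contained in $f_c^{n+3k+1}(P_{c,n+3k+5}(0))=P_{c,4}(f_c^{n+3k}(c))$, a puzzle piece of depth $4$ lying inside $P_{c,3}(0)$; but $X_c$ and $\tX_c$ are depth-$3$ puzzle pieces distinct from $P_{c,3}(0)$, a contradiction.
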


\proof
Let~$C_2$ and $n_3$ be given by Lemma~\ref{l:quasi tent estimate} with~$m_1 = 4$ and $\varepsilon = \frac{1}{10}$, let~$n_5$ and~$C_5 > 0$ be given by Lemma~\ref{l:landing contribution}, and let~$C_4$ and~$C_7$ be given by Lemmas~\ref{l:critical line} and~\ref{l:level k contribution}, respectively.
We prove the lemma with~$n_8 \= \max \{n_3, n_4, n_5 \}$.
To do this, fix an integer~$n \ge n_8$, a parameter~$c$ in~$\cK_n$, $t \ge 3$, and an integer $k \ge 0$.

To prove part~$1$, let~$W_k$ be the component~$W$ of~$D_c$ given Lemma~\ref{l:critical line}.
Then for each $p < 0$, we have
\begin{multline*}
\sum_{W \in \fD_{c,k}\cap \fD_c^\R} \exp(-m_c(W)p) \inf_{z\in W} |DF_c(z)|^{-t}
\\ 
\begin{aligned}
& \ge  
\exp( - m_c(W_k) p) \inf_{z \in W_k} |DF_c(z)|^{-t}
\\ & \ge
C_4^{-t} \exp( - (n + 3k + 3) p) |Df_c^{n +
3k}(c)|^{-t/2}
\\ & >
C_4^{-t} \exp( - (n + 3k) p) |Df_c^{n +
3k}(c)|^{-t/2}.
\end{aligned}
\end{multline*}
This proves part~$1$ of the lemma with~$C_8 = C_4$.

To prove part~$2$, let~$p \ge - t\chicrit/2 - t \frac{1}{10} \log 2$ be given.
By Lemma~\ref{l:quasi tent estimate} with~$z = f_c^n(c)$, we have
\begin{equation*}
\chicrit
=
\liminf_{m \to + \infty} \frac{1}{m} \log |Df_c^m(c)|
\le
\tfrac{11}{10} \log 2.
\end{equation*}
Thus~$p \ge - t \tfrac{13}{20} \log 2$ and therefore $2 \exp(- p) < 2^t$.
Combined with Lemmas~\ref{l:landing contribution} and~\ref{l:level k contribution}, we obtain part~$2$ of the lemma with~$C_8 = 2 C_7 C_5$.
\endproof

\begin{proof}[Proof of Proposition~\ref{p:improved MS criterion}]
Let $n_4$ be given by Proposition~\ref{p:Bowen type formula} and let~$n_8$ and~$C_8$ be given by Lemma~\ref{l:estimating Z_1 by the postcritical series}.
To prove the proposition, fix an integer~$n \ge \max \{ n_4, n_8 \}$, a parameter~$c$ in~$\cK_n$, and~$t \ge 3$.

To prove part~$1$, let~$p$ be in~$[ - t \chicrit / 2, 0)$.
By part~$1$ of Lemma~\ref{l:estimating Z_1 by the postcritical series}, if the sum
\begin{equation}
  \label{e:postcritical series}
\sum_{k = 0}^{+ \infty} \exp( - (n + 3k) p) |Df_c^{n + 3k}(c)|^{-t/2}
\end{equation}
is greater than or equal to~$C_8^t$, then~$\sP_c^{\R}(t, p) > 0$ and by Proposition~\ref{p:Bowen type formula} we have~$P_c^{\R}(t) \ge p$.
This proves the first part of part~$1$ with~$C_6 = C_8$.
To complete the proof of part~$1$, suppose~\eqref{e:postcritical series} is finite and greater than or equal to~$(2C_8)^t$.
Then there is~$p' > p$ such that~\eqref{e:postcritical series} with~$p$ replaced by~$p'$ is greater than or equal to~$C_8^t$.
As shown above, this implies~$P_c^{\R}(t) \ge p' > p$.
On the other hand, by part~$2$ of Lemma~\ref{l:estimating Z_1 by the postcritical series} the sum
$$ \sum_{W \in \fD_c} \exp(- m_c(W)p) \sup_{z \in W}|DF_c(z)|^{-t} $$
is finite, so~$\sP_c^{\R}(t, p)$ is also finite.
This completes the proof of part~$1$ with~$C_6 = 2C_8$.

To prove part~$2$, let~$p \ge - t\chicrit/2$ be given.
By part~$2$ of Lemma~\ref{l:estimating Z_1 by the postcritical series}, if~\eqref{e:postcritical series} is less than or equal to~$C_8^{-t}$, then~$\sP_c^{\C}(t, p) < 0$ and by Proposition~\ref{p:Bowen type formula} we have~$P_c^{\C}(t) \le p$.
This proves part~$2$ of the proposition with~$C_6 = C_8$.

To prove part~$3$, let~$p \ge - t \chicrit/2$ be given and put~$p' \= p - t \tfrac{1}{10} \log 2$.
By part~$2$ of Lemma~\ref{l:estimating Z_1 by the postcritical series} with~$k = 0$, the sum
$$ \sum_{W \in \fD_{c, 0}} \exp(- m_c(W) p) \sup_{z \in W}|DF_c(z)|^{-t} $$
is finite.
Let~$A > 0$ be a constant such that for every pair of integers~$k \ge 1$ and~$m \ge 3k + 1$, we have
$$ m
\le
A k 2^{t (m - 3k)/10}. $$
Applying part~$2$ of Lemma~\ref{l:estimating Z_1 by the postcritical series} with~$p$ replaced by~$p'$, we obtain that for each integer~$k \ge 1$ we have
\begin{multline*}
\sum_{W \in \fD_{c, k}} m_c(W) \cdot \exp(- m_c(W) p) \sup_{z \in W}|DF_c(z)|^{-t}
\\
\begin{aligned}
& \le
\sum_{W \in \fD_{c, k}} A k 2^{t (m_c(W) - 3k) / 10} \exp(- m_c(W) p) \sup_{z \in W}|DF_c(z)|^{-t}
\\ & =
A k 2^{- 3k t/10} \sum_{W \in \fD_{c, k}} \exp \left( - m_c(W) p' \right) \sup_{z \in W}|DF_c(z)|^{-t}
\\ & \le
\left(A C_8^t 2^{n t/10} \right) k \cdot \exp(- (n + 3k)p) |Df_c^{n + 3k}(c)|^{-t/2}.
\end{aligned}
\end{multline*}
Summing over~$k \ge 0$ we obtain the desired assertion.
\end{proof}

\begin{proof}[Proof of Proposition~\ref{p:first-order phase transition}]
We give the proof for~$f_c|_{J_c}$; the proof for~$f_c|_{I_c}$ is analogous.
Let~$n_4$ be given by Proposition~\ref{p:Bowen type formula} and let~$n_7$ and $C_6$ be given by Proposition~\ref{p:improved MS criterion}.
Put
$$ n_0 \= \max \{ 5, n_4, n_7 \}
\text{ and }
C_0 \= C_6 $$
and let~$n \ge n_0$ be an integer and~$c$ a parameter in~$\cK_n$ for which the hypotheses of the proposition are satisfied.

The first hypothesis of the proposition together with part~$2$ of Proposition~\ref{p:improved MS criterion} with~$p = - t \chicrit/2$ imply that for every sufficiently large~$t > 0$, we have
$$ P_c^{\C}(t) \le - t \chicrit/2. $$
From Proposition~\ref{p:critical line} we deduce that for such~$t$ we have equality.
The second hypothesis of the proposition together with part~$1$ of
Proposition~\ref{p:improved MS criterion} with~$t = t_0$ and~$p = - t_0 \chicrit/2$, imply that~$f_c$ has a phase transition at some~$t_* > t_0$ satisfying
$$ P_c^{\C}(t_*) = - t_* \chicrit/2 < 0. $$
This proves the first part of the proposition.

To prove the second part of the proposition, we first prove that there exists an equilibrium state of~$f_c$ for the potential~$- t_* \log |Df_c|$.
Our additional hypothesis together with part~$3$ of Proposition~\ref{p:improved MS criterion} with~$t = t_*$ and~$p = - t_* \chicrit /2$, imply that
\begin{equation}
\label{e:time integrability}
\sum_{W \in \fD_c} m_c(W) \exp(m_c(W) t_* \chicrit/2) \sup_{z \in W} |DF_c(z)|^{- t_*}
<
+ \infty.
\end{equation}
The rest of the argument is now standard; we refer to~\cite[\S$4$]{PrzRiv11} for precisions, see also Remark~\ref{r:chiinf} below.
Since~$\sP_c^{\C}(t_*, - t_* \chicrit/2) = 0$, there is a $(t_*, - t_*
\chicrit/2)$\nobreakdash-conformal measure~$\mu$ for~$f_c$ that assigns positive measure to the maximal invariant set of~$F_c$, see~\cite[Theorem~A in~\S$4$ and Proposition~$4.3$]{PrzRiv11}.
Standard considerations imply that there is an invariant probability measure~$\rho$ for~$F_c$ that is absolutely continuous with respect to~$\mu$.
Thus~\eqref{e:time integrability} together with the bounded distortion property of~$F_c$ (Lemma~\ref{l:bounded distortion to nice}) imply that the sum $\sum_{W \in \fD_c} m_c(W) \rho(W)$ is finite.
Therefore the measure
$$ \hrho \= \sum_{W \in \fD_c} \sum_{j = 0}^{m_c(W) - 1} (f_c^j)_* (\rho|_W) $$ 
is finite.
This measure is invariant by~$f_c$ and it is absolutely continuous with respect to~$\mu$.
To prove that the probability measure proportional to~$\hrho$ is an equilibrium state of~$f_c$ for the potential~$- t \log |Df_c|$, first remark that~$\rho$ is an equilibrium state of~$F_c$ for the potential~$- t_* \log |DF_c| + (t_* \chicrit /2) m_c$ and that the measure-theoretic entropy of this measure is strictly positive, see for example~\cite{MauUrb03}.
Then the generalized Abramov formula~\cite[Proposition~$5.1$]{Zwe05} implies that the measure-theoretic entropy of~$\hrho$ is strictly positive and that the probability measure proportional to~$\hrho$ is an equilibrium state of~$f_c$ for the potential~$- t_* \log |Df_c|$.
That this measure is exact, and hence ergodic and mixing, is shown for example in~\cite{You99}.
Finally, the uniqueness of the equilibrium state is given by Ruelle's inequality and by~\cite[Theorem~$6$]{Dob1304} in the real setting and~\cite[Theorem~$8$]{Dob12} in the complex setting.

The non-differentiability of~$P_c^{\C}$ at~$t = t_*$ follows from the existence of an equilibrium state of~$f_c$ for the potential~$- t_* \log |Df_c|$, see~\cite[Corollary~$1.3$]{InoRiv12}.
\end{proof}
\begin{rema}
\label{r:chiinf}
For completeness we show that for a parameter~$c$ as in the proof of Proposition~\ref{p:first-order phase transition} we have~$\chiinfR = \chicrit/2$ and
$$ \chiinfC \= \inf \left\{ \int \log |Df_c| \ d\mu \mid \mu \in \sM_c^{\C} \right\}
=
\chicrit/2, $$
although this is not needed in the proof.
For each invariant probability measure~$\mu$ of~$f_c$ supported on~$J_c$ and every sufficiently large~$t > 0$ we have
$$ \int \log |Df_c| \ d\mu \ge (- P_c^{\C}(t) + h_\mu(f_c))/t
=
\chicrit/2 + h_\mu(f_c)/t
\ge
\chicrit/2. $$
This proves~$\chiinfC \ge \chicrit/2$.
Together with Proposition~\ref{p:critical line} and with the inequality~$\chiinfR \ge \chiinfC$, this implies~$\chiinfR = \chiinfC = \chicrit/2$.
\end{rema}

\appendix
\section{Multipliers of periodic orbits of period~$3$} 
\label{s:appendix}
This appendix is devoted to prove the following lemma, used in~\S\ref{ss:proof of Main Theorem}.
The functions~$p$ and~$\wtp$ appearing in the following lemma are defined in~\S\ref{ss:proof of Main Theorem}.
\begin{lemm}
\label{l:period 3 transversality}
We have,
$$ \left. \frac{\partial}{\partial c} |Df_c^3(p(c))| \right|_{c = - 2}
> 
\left. \frac{\partial}{\partial c} |Df_c^3(\wtp(c))| \right|_{c = - 2}. $$
\end{lemm}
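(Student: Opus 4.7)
The plan is to reduce the inequality to an explicit finite computation at $c = -2$, where $f_{-2}$ is the Chebyshev polynomial. The map $h(\theta) = 2\cos(\pi\theta)$ semiconjugates $f_{-2}|_{[-2,2]}$ to the standard tent map $T$ on $[0,1]$. Under this conjugacy $P_{-2,1}(0) \cap \R$ becomes $(1/3, 2/3)$, and a direct enumeration of preimages yields
$$T^{-3}((1/3, 2/3)) \cap (1/3, 2/3) = (5/12, 11/24) \cup (13/24, 7/12).$$
Since $\gamma(-2) = h(7/12)$ and $\tgamma(-2) = h(5/12)$, the second component corresponds to $Y_{-2} \cap \R$ and the first to $\tY_{-2} \cap \R$. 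On these intervals $T^3$ is affine, namely $\theta \mapsto 8\theta - 4$ on $(13/24, 7/12)$ and $\theta \mapsto 4 - 8\theta$ on $(5/12, 11/24)$, with unique fixed points $4/7$ and $4/9$. Hence $p(-2) = 2\cos(4\pi/7)$ and $\wtp(-2) = 2\cos(4\pi/9)$; their forward orbits under $f_{-2}$ are
$$\{2\cos(4\pi/7), -2\cos(\pi/7), 2\cos(2\pi/7)\} \text{ and } \{2\cos(4\pi/9), -2\cos(\pi/9), 2\cos(2\pi/9)\}.$$

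Since $Df_c(z) = 2z$, the multiplier of any $3$-cycle of $f_c$ is $8$ times the product of its points. The telescoping identity $\sin(8\alpha) = 8\sin\alpha \cdot \cos\alpha \cos(2\alpha)\cos(4\alpha)$, applied with $\alpha = \pi/7$ and $\alpha = \pi/9$ (using $\sin(8\pi/7) = -\sin(\pi/7)$ and $\sin(8\pi/9) = \sin(\pi/9)$), gives the classical identities $\cos(\pi/7)\cos(2\pi/7)\cos(4\pi/7) = -1/8$ and $\cos(\pi/9)\cos(2\pi/9)\cos(4\pi/9) = 1/8$, whence $Df_{-2}^3(p(-2)) = 8$ and $Df_{-2}^3(\wtp(-2)) = -8$. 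By holomorphic dependence on $c$, both $Df_c^3(p(c))$ and $-Df_c^3(\wtp(c))$ are positive for $c$ close to $-2$, so the claim is equivalent to
$$\left.\frac{d}{dc}Df_c^3(p(c))\right|_{c=-2} + \left.\frac{d}{dc}Df_c^3(\wtp(c))\right|_{c=-2} > 0.$$

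For either fixed-point branch, write $q_j(c) = f_c^j(q(c))$ and $y_j = dq_j/dc$. Differentiating $q_{j+1} = q_j^2 + c$ yields $y_{j+1} = 2q_j y_j + 1$; imposing $y_3 = y_0$ then gives $y_0 = (1 + 2q_2 + 4q_1 q_2)/(1 - \lambda)$ and, by cyclic symmetry, $y_j = (1 + 2q_{j+2} + 4q_{j+1}q_{j+2})/(1 - \lambda)$. Since $\lambda = 8 q_0 q_1 q_2$, the chain rule gives
$$\frac{d\lambda}{dc} = \lambda \sum_{j = 0}^{2} \frac{y_j}{q_j} = \frac{\lambda}{1-\lambda}\left(\sum_j \frac{1}{q_j} + 2 \sum_j \frac{q_{j+2}}{q_j} + 4\sum_j \frac{q_{j+1}q_{j+2}}{q_j}\right).$$
Each cyclic sum on the right is a symmetric function (or sum of cyclically symmetric terms) in the orbit, and the two orbits are Galois orbits over $\Q$ with respective minimal polynomials $y^3 + y^2 - 2y - 1$ (the minimal polynomial of $2\cos(2\pi/7)$) and $y^3 - 3y + 1$ (the minimal polynomial of $2\cos(2\pi/9)$). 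Reading off the elementary symmetric functions $(s_1, s_2, s_3)$ from these polynomials and using Newton's identities, a brief calculation yields $\sum_j y_j/q_j = -2$ for $p$ and $-3$ for $\wtp$, so that $\lambda'(-2) = 8 \cdot (-2) = -16$ and $\tlambda'(-2) = (-8)\cdot(-3) = 24$, whose sum is $8 > 0$, as required. The main obstacle is the bookkeeping of this last step; the reduction to the minimal polynomials keeps everything within rational arithmetic.
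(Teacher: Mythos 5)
Your proof is correct and follows the same strategy as the paper: both identify $p(-2)=2\cos(4\pi/7)$, $\wtp(-2)=2\cos(4\pi/9)$, observe the multipliers are $+8$ and $-8$ so the claim reduces to $\lambda'(-2)+\widetilde\lambda'(-2)>0$, derive the same formula $y_j=(1+2q_{j+2}+4q_{j+1}q_{j+2})/(1-\lambda)$ from the variational equation, and evaluate using the factorizations of the Chebyshev polynomials to get $-16$ and $24$. (Your logarithmic-derivative bookkeeping $\lambda'/\lambda=\sum y_j/q_j$ is just a reparametrization of the paper's direct differentiation of $8\pi_0\pi_1\pi_2$, and the tent-map conjugacy gives a clean independent identification of $p(-2)$ and $\wtp(-2)$ and of the signs $\pm 8$, which the paper merely asserts.)

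There is one expositional gap worth flagging. The middle cyclic sum $\sum_j q_{j+2}/q_j$ is \emph{not} a symmetric function of the orbit and therefore cannot be obtained from the elementary symmetric functions by Newton's identities, as your text suggests; it is only invariant under the cyclic subgroup. (It is still a rational number because the Galois action on $\Q(2\cos(2\pi/7))$ and on $\Q(2\cos(2\pi/9))$ is exactly the cyclic permutation by $f_{-2}$, a point you gesture at but do not use.) The clean way to evaluate it, which is what the paper's computation implicitly does, is to use the dynamical constraint $q_{j-1}^2=q_j-c$: writing $\sum_j q_{j+2}/q_j=(q_0q_1^2+q_1q_2^2+q_2q_0^2)/\sigma_3$ and substituting gives
\[
q_0q_1^2+q_1q_2^2+q_2q_0^2 = q_0(q_2-c)+q_1(q_0-c)+q_2(q_1-c)=\sigma_2-c\sigma_1,
\]
so $\sum_j q_{j+2}/q_j=(\sigma_2-c\sigma_1)/\sigma_3$. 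With this substitution your values $\sum_j y_j/q_j=-2$ and $-3$ check out, and the rest of the argument is sound.
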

\begin{proof}
Notice that for~$c = -2$
$$ \cO \= \{ 2 \cos (2\pi/7), 2 \cos (4 \pi/ 7), 2 \cos(6 \pi/7) \} $$
and
$$ \widetilde{\cO} \= \{ 2 \cos (2\pi/9), 2 \cos (4 \pi/ 9), 2 \cos(8 \pi/9) \} $$
are the only periodic orbits of minimal period~$3$ of~$f_{-2}$.
Since,
$$ P_{-2, 1}(0) \cap \R
=
[\alpha(-2), \talpha(-2)]
=
[-1, 1], $$
it follows that~$x = 2\cos(4 \pi / 7)$ and~$x = 2 \cos( 4 \pi / 9)$ are the only periodic points of period~$3$ of~$f_{-2}$ in~$P_{-2, 1}(0)$.
On the other hand, the inequalities
\[
2 \cos(4\pi/7) ~<~ 0 ~< ~2 \cos(4\pi/9)
\]
imply that~$p(-2) = 2 \cos(4 \pi / 7)$ and~$\wtp(-2) = 2 \cos(4 \pi / 9)$ and that
$$ Df_{-2}^3 (p(-2)) > 0 > Df_{-2}^3 (\wtp(-2)). $$
Since both functions~$p$ and~$\wtp$ are real, the desired assertion is equivalent to,
\begin{equation}
\label{e:multiplier divergence}
\left. \frac{\partial}{\partial c} Df_c^3(p(c)) \right|_{c = - 2}
>
- \left. \frac{\partial}{\partial c} Df_c^3(\wtp(c)) \right|_{c = - 2}.
\end{equation}

Let~$\pi_0$ be either one of the functions~$p$, $f_c \circ p$, $f_c^2 \circ p$, $\wtp$, $f_c \circ \wtp$, or~$f_c^2 \circ \wtp$ and put
$$ \pi_1(c) = f_c \circ \pi_0(c)
\text{ and }
\pi_2(c) = f_c \circ \pi_1(c). $$
Then~$f_c(\pi_2(c)) = \pi_0(c)$ and a direct computation shows that
$$ D\pi_0
=
- \frac{1 + 2 \pi_2 + 4 \pi_1 \pi_2}{8 \pi_0 \pi_1 \pi_2 - 1}. $$
Therefore, for each~$c$ in~$\cP_3(-2)$ we have,
\begin{align*}
& ((D\pi_0) \pi_1 \pi_2)(c)
\\ & =
- \frac{\pi_1(c) \pi_2(c) + 2 \pi_0(c) \pi_1(c) + 4 \pi_0(c) \pi_2(c) - 2 c \pi_1 (c)- 4 c \pi_0(c) - 4 c \pi_2(c) + 4 c^2}{8 \pi_0(c) \pi_1(c) \pi_2(c) - 1}.
\end{align*}
Using the formula above and the formula above with~$\pi_0$ replaced by~$\pi_1$ and then by~$\pi_2$, we obtain
\begin{align*}
& \frac{\partial}{\partial c} Df_c^3(\pi_0(c))
\\ & =
8 D (\pi_0 \pi_1 \pi_2)(c)
\\ & =
- 8 \frac{7 (\pi_0(c) \pi_1(c) + \pi_1(c) \pi_2(c) + \pi_2(c) \pi_0(c)) - 10 c (\pi_0(c) + \pi_1(c) + \pi_2(c)) + 12 c^2}{8 \pi_0(c) \pi_1(c) \pi_2(c) - 1}.
\end{align*}
Thus, if for each~$j$ in~$\{ 1, 2, 3 \}$ we denote by~$\sigma_j$ (resp. $\tsigma_j$) the elementary symmetric function of degree~$j$ in the elements of~$\cO$ (resp.~$\widetilde{\cO}$), then by the above equation with~$\pi_0 = p$ (resp. $\pi_0 = \wtp$) and~$c = -2$ we obtain,
$$
\left. \frac{\partial}{\partial c} Df_c^3(p(c)) \right|_{c = - 2}
=
- 8 \frac{7 \sigma_2 + 20 \sigma_1 + 48}{8 \sigma_3 - 1}
$$
$$ \left( \text{resp.}
\left. \frac{\partial}{\partial c} Df_c^3(\wtp(c)) \right|_{c = - 2}
=
- 8 \frac{7 \tsigma_2 + 20 \tsigma_1 + 48}{8 \tsigma_3 - 1}
\right).$$

To calculate these numbers, for a given an integer~$n \ge 2$ let~$T_n$ be the $n$-th Chebyshev polynomial, so that for every real number~$\theta$ we have
$$ T_n(\cos(\theta)) = \cos(n \theta). $$
Notice that the zeros of the polynomial~$T_4(x/2) - T_3(x/2)$ different from~$x = 2$ are precisely the elements of~$\cO$.
We thus have the identity
$$ \frac{2 T_4(x/2) - 2 T_3(x/2)}{x - 2}
=
x^3 + x^2 - 2 x - 1
=
x^3 - \sigma_1 x^2 + \sigma_2 x - \sigma_3. $$
So~$\sigma_1 = -1$, $\sigma_2 = -2$, $\sigma_3 = 1$ and by the above
$$ \left. \frac{\partial}{\partial c} Df_c^3(p(c)) \right|_{c = - 2}
=
- 16.
$$

On the other hand, the zeros of the polynomial~$T_5(x/2) - T_4(x/2)$ different from~$x = 2$ and~$x = -1$ are precisely the elements of~$\widetilde{\cO}$.
Therefore we have the identity
$$ \frac{2 T_5(x/2) - 2 T_4(x/2)}{(x - 2)(x + 1)}
=
x^3 - 3 x + 1
=
x^3 - \tsigma_1 x^2 + \tsigma_2 x - \tsigma_3. $$
So~$\tsigma_1 = 0$, $\tsigma_2 = - 3$, $\tsigma_3 = - 1$ and
$$ \left. \frac{\partial}{\partial c} Df_c^3(\wtp(c)) \right|_{c = - 2}
=
24.
$$
This proves~\eqref{e:multiplier divergence} and completes the proof of the lemma.
\end{proof}

\bibliographystyle{alpha}

\end{document}